\newcommand{\av}[3]{\operatorname{av}_{#1} \left(#2, #3\right)}
\numberwithin{equation}{section}
\newtheorem{thm}{Theorem}
\numberwithin{thm}{section} 
\newtheorem{prop}[thm]{Proposition}
\newtheorem{lemma}[thm]{Lemma}
\newtheorem{result}[thm]{Result}
\newtheorem{definition}[thm]{Definition}
\newtheorem{remark}[thm]{Remark}
\begin{document}

\title{Geometric Hermite Interpolation in $\mathbb{R}^n$ by Refinements}
\author{Hofit Ben-Zion Vardi, Nira Dyn, Nir Sharon\thanks{Corresponding author: nsharon@tauex.tau.ac.il}}
 
\date{}
\maketitle

\begin{abstract}
We describe a general approach for constructing a broad class of operators approximating high-dimensional curves based on geometric Hermite data. The geometric Hermite data consists of point samples and their associated tangent vectors of unit length. Extending the classical Hermite interpolation of functions, this geometric Hermite problem has become popular in recent years and has ignited a series of solutions in the 2D plane and 3D space. Here, we present a  method for approximating curves, which is valid in any dimension. A basic building block of our approach is a Hermite average, a notion introduced in this paper. We provide an example of such an average and show, via an illustrative interpolating subdivision scheme, how the limits of the subdivision scheme inherit geometric properties of the average. Finally, we prove the convergence of this subdivision scheme, whose limit interpolates the geometric Hermite data and approximates the sampled curve. We conclude the paper with various numerical examples that elucidate the advantages of our approach.
\end{abstract}

{\bf Keywords:} Hermite interpolation, curve approximation, nonlinear averaging, subdivision schemes. \\
{\bf MSC2020:} 65D05, 65D10, 53A04, 65Y99.

\section{Introduction}  

The problem of geometric Hermite approximation is to estimate a curve from a finite number of its samples, consisting of both points on the curve and their associated, normalized tangent vectors. This problem is fundamental in Computer Aided Geometric Design (CAGD), see, e.g.,~\cite{de1987high, meek1997geometric, xu2001geometric}, but it also appears in various other applied topics such as biomechanical engineering~\cite{Biomechanical_Engineering}, marine biology~\cite{tremblay2006interpolation}, scientific simulations~\cite{vargas2019leapfrog}, CNC machining~\cite{beudaert20115} and more. The primary challenge in geometric Hermite approximation is to incorporate the additional information, given in the form of normalized tangent vectors, for obtaining a better approximation of the sampled curve than approximation  based on points only. Moreover, the fact that the curve lies in a high-dimensional space poses additional practical and theoretical difficulties. Our approach provides a unified solution that copes with the above, and generates approximation with many favorable geometric properties. 

Classical Hermite interpolation deals with the linear problem of interpolating a function to data consisting of its values and its derivatives' values at a finite number of points by a polynomial. In~\cite{merrien1992family}, a family of linear Hermite subdivision schemes is introduced. This work opened the door for solving Hermite-type approximation through refinement~\cite{dubuc2009hermite, dyn1995analysis, han2005noninterpolatory, jeong2017construction, merrien2012hermite}. Although all these subdivision schemes are linear, refinement is the approach taken in this paper to the nonlinear problem of geometric Hermite interpolation.

Recent years have given rise to many nonlinear subdivision schemes and, in particular, to operators refining 2D and 3D geometric Hermite data, e.g.,~\cite{aihua2016new, lipovetsky2016weighted, reif2021clothoid}. The design of such subdivision operators is typically based on the reconstruction of geometric objects from a certain class. Such operators were suggested for circle reconstruction~\cite{chalmoviansky2007non, lipovetsky2016weighted, romani2010circle}, ellipse reconstruction~\cite{conti2015ellipse}, local shape-preserving~\cite{costantini2008constrained,manni2010shape} and local fitting of clothoids~\cite{reif2021clothoid}. The relative simplicity and high flexibility in the design of refinement rules allow posing advanced solutions for modern instances of the Hermite problem, e.g., for manifold values~\cite{moosmuller2016c, moosmuller2017hermite}. The latter is an example of the agility of subdivision schemes, perhaps best illustrated when considering the variety of methods that serve as approximation operators over diverse nonlinear settings. Note that although the Euclidean space is linear, the problem of geometric Hermite approximation for data sampled from a curve in an Euclidean space is nonlinear. This is due to the fact that the space of pairs of point-normalized tangent (for short, point-ntangent) is not linear, as it is a subset of $\mathbb{R}^n \times S^{n-1}$. 

In our approach, we interpret the refinement rules of a subdivision scheme as a method of averaging, as in~\cite{dyn2011convergence, dyn2017manifold, schaefer2008nonlinear}. Thus, generating curves in a nonlinear environment boils down to properly defining and understanding the averaging in the particular space. While most of the work in this direction refers to non-Hermite problems, some recent papers such as~\cite{lipovetsky2021subdivision, lipovetsky2016weighted, reif2021clothoid} introduce different techniques for averaging two point-normal pairs in the plane. Therefore, as a first step, we formulate the notion of Hermite average in $\mathbb{R}^{n}$, which encapsulates the concept of averaging two point-ntangent pairs sampled from a curve in $\mathbb{R}^{n}$ for any $n\geq2$. We then propose a nonlinear average in $\mathbb{R}^{n}$ that is based on B\'{e}zier curves and satisfies the requirements of the Hermite average. Determining such a mean is crucial for constructing our subdivision operators. Our paper is an example of how the averaging approach can serve as a fundamental bridge for obtaining approximation operators in general metric spaces, see also, e.g.,~\cite{kels2013subdivision}.

The construction of our
approximation operators, begins with the study of our Hermite average termed hereafter B\'{e}zier average. We show that this average satisfies several geometric properties such as invariance under similarity transformations and preservation of lines and circles. Then, we form a two-point interpolatory subdivision scheme by repeatedly applying the B\'{e}zier average as an insertion rule. This subdivision scheme serves as an illustrative example, which demonstrates the benefits of using our approach for constructing approximation operators based on refinement. For example, the geometric subdivision scheme we form inherits the geometric properties of the B\'{e}zier average; it is invariant under similarity transformations, it reconstructs lines and circles. Furthermore, our general approach of using an appropriate average, enables us to obtain a large class of nonlinear subdivision schemes refining geometric Hermite data, by replacing the linear average in the Lane--Riesenfeld subdivision schemes~\cite{lane1980theoretical} by the B\'{e}zier average. 

As part of the analysis, we prove the convergence of our interpolatory subdivision scheme, namely that it refines geometric Hermite data and its limit is a smooth curve that interpolates both the data points and their normalized tangent vectors. Moreover, the limit normalized tangent vectors are tangent to the limit curve. The proof of convergence of our non-linear subdivision scheme uses an auxiliary result which we validate with computer-aided evidence, combining analysis of multivariate functions and exhaustive search done by a computer program. The details appear in Appendix A  and as a complementary software code.

One additional advantage of our method is that the new schemes based on the B\'{e}zier  average, allow us to use a more flexible sampling strategy than typically assumed when applying subdivision schemes to approximate curves from Hermite data~\cite{floater2006parameterization}. We demonstrate this property numerically, emphasizing the approximation capabilities and how it assists in avoiding geometric artifacts, see Section~\ref{sec:examples}. In addition, we present numerical evidence of fourth-order approximation of curves, which is higher than what current theory guarantees for non-uniform sampling~\cite{floater2006parameterization}. This fourth-order approximation of curves  generalizes similar results in classical Hermite interpolation of functions. We also compare numerically our schemes with other known methods and illustrate the superiority of techniques based on the  B\'{e}zier  average. The entire set of examples is available for reproducibility as open Python code in~\url{https://github.com/HofitVardi/Hermite-Interpolation}.   

The paper is organized as follows. Section~\ref {sec:HermiteAveraging} presents the precise statement of the fundamental approximation problem we solve and defines the notion of Hermite averaging. Then, we introduce our B\'{e}zier average and prove that it is  Hermite average. The section is concluded with several geometric properties of this new Hermite average. Section~\ref{sec: subdivision} shows how to form Hermite subdivision schemes based on the B\'{e}zier average. There we offer additional properties of the B\'{e}zier average and use them to prove the convergence of our interpolatory Hermite subdivision scheme. In Section~\ref{sec:examples} we explore our solution to the problem of geometric Hermite interpolation numerically. The numerical examples demonstrate the performance of our Hermite subdivision schemes compared to other approximants. In the last section, conclusions and future work are discussed. This paper is accompanied by three appendices that include the computer-aided evidence mentioned above, proofs of some claims from the main sections, and a further discussion on the B\'{e}zier average.

\section{Averaging in the Hermite setting} \label{sec:HermiteAveraging}

It is important to place the motivation behind the following discussion in the area of Hermite approximation. We thus begin with some essential notation and definitions, leading to the formulation of the problem of geometric Hermite interpolation. 

In the following subsections we state a few lemmas and a proposition. The proofs of these results are given in Appendix~\ref{proofs}. 

\subsection{Notation and definitions}

A geometric Hermite data is a sequence 
\begin{equation} \label{eqn:HermiteData}
\left( (p_{j},  v_{j}) \right)_{j \in J}\subseteq\mathbb{R}^{n}\times S^{n - 1} ,
\end{equation}
where $\mathbb{R}^{n}$ is the Euclidean $ n$ dimensional space equipped with the Euclidean metric, $ S^{n - 1} \subset \mathbb{R}^{n} $ is the $n - 1$ sphere equipped with the angular metric, and $J=0,1,\ldots,N$ where $N \in \mathbb{N}$. We view such a sequence as samples of a $G^1$ curve, consisting of points and tangent directions. Unless otherwise stated, we denote by $ d\left(\cdot ,\cdot\right)$ and $ g\left(\cdot ,\cdot\right)$ the Euclidean and angular metrics on $\mathbb{R}^{n}$ and $ S^{n - 1}$, respectively. Moreover, $\left\langle \cdot ,\cdot\right\rangle$ and $\left\Vert \cdot\right\Vert$ refer to the Euclidean inner product and norm in $\mathbb{R}^{n}$.

Let $\begin{pmatrix}
p_0\\v_0\\
\end{pmatrix},\begin{pmatrix}
p_1\\v_1\\
\end{pmatrix}\in\mathbb{R}^n\times S^{n-1}$ s.t. $p_0\neq p_1$.

Consider the following functions of $\left(\begin{pmatrix}
p_0\\v_0\\
\end{pmatrix},\begin{pmatrix}
p_1\\v_1\\
\end{pmatrix}\right)$:

The normalized vector of difference between $p_0$ and $p_1$ is
\begin{equation}
\label{u definition}
    u\left(\begin{pmatrix}
p_0\\v_0\\
\end{pmatrix},\begin{pmatrix}
p_1\\v_1\\
\end{pmatrix}\right):=\frac{p_1-p_0}{\norm{p_1-p_0}}\in S^{n-1},
\end{equation}

The distance between $v_0$ and $v_{1}$ in terms of their angular distance is
\begin{equation}
\label{theta definition}
    \theta\left(\begin{pmatrix}
p_0\\v_0\\
\end{pmatrix},\begin{pmatrix}
p_1\\v_1\\
\end{pmatrix}\right):=g\left(v_0,v_1\right)=\arccos\left(\left\langle v_0,v_1\right\rangle\right)\in [0,\pi],
\end{equation}

The deviations of $v_0$ and $v_1$ from $u$ (in terms of their angular distance) is
\begin{equation}
\label{theta j definition}
    \theta_{j}\left(\begin{pmatrix}
p_0\\v_0\\
\end{pmatrix},\begin{pmatrix}
p_1\\v_1\\
\end{pmatrix}\right):=g\left(v_j,u\right)= \arccos\left(\left\langle v_j,u\right\rangle\right)\in [0,\pi],\ \ j=0,1.
\end{equation}

The $L_2$ norm of $\begin{pmatrix}
\theta_{0},
\theta_{1}
\end{pmatrix}$, regarded as a vector in 
$\mathbb{R}^{2}$, is
\begin{equation}
\label{sigma definition}
    \sigma\left(\begin{pmatrix}
p_0\\v_0\\
\end{pmatrix},\begin{pmatrix}
p_1\\v_1\\
\end{pmatrix}\right):=
\sqrt{\theta_1^2+\theta_2^2}\in [0,\sqrt{2}\pi].
\end{equation}
The last quantity was first used in~\cite{reif2021clothoid} in the convergence analysis  of a Hermite subdivision scheme. 

When there is no possibility of ambiguity, we omit the notation of the variables and simply write $u,\theta,\theta_0,\theta_1$ and $\sigma$. 

We conclude with the definition of the problem we solve.
\begin{definition}
\label{def: GHI}
The problem of geometric Hermite interpolation is to find a $G^{1}$ curve $\widetilde{\gamma} \colon \mathbb{R} \to \mathbb{R}^{n}$ which interpolates the geometric Hermite data $\left( (p_{j},v_j) \right)_{j=1}^N \subseteq\mathbb{R}^{n}\times S^{n - 1}$, see~\eqref{eqn:HermiteData}, where $N\in \mathbb{N}$. We assume the data is sampled from a $G^{1}$ curve $\gamma \colon \mathbb{R} \to \mathbb{R}^{n}$, 
\begin{equation} \label{eqn:sampled_hermite_data}
    \gamma(t_j)=p_j ,  \quad T(\gamma,t_j)=v_j , \quad j = 1,\ldots,N ,
\end{equation}
with $t_j < t_{j+1}$ for all $1 \le j \le N-1$. Here $T$ operates on $G^1$ curves and returns the ntangent to a curve at a specified parameter value. The curve $\widetilde{\gamma}$ should satisfy the interpolation conditions, 
\begin{equation}
\label{interpolation}
    \widetilde{\gamma}(\rho({\widetilde{t}}_j))= p_j \quad\text{and}\quad T(\widetilde{\gamma},\rho(\widetilde{t}_j))=v_j , \quad j = 1,\ldots,N ,
\end{equation} 
where $\rho$ is a parametrization and ${\widetilde{t}}_j < {\widetilde{t}}_{j+1}$ for all $1 \le j \le N-1$.
\end{definition}

\subsection{Hermite Average}

We address the problem of geometric Hermite interpolation through a subdivision process which is defined via an average over $\mathbb{R}^{n}\times S^{n - 1}$. We call this average a Hermite average, define it, and later propose a method to construct such a mean. The new definition illustrates some of the significant differences between classical and geometric Hermite settings. 

The classical concept of average is perhaps best demonstrated by the case of positive real numbers, see ,e.g.,~\cite{itai2013subdivision}. An average is a bivariate function with a weight parameter
\[  
\operatorname{av}  \colon [0,1] \times \left( (0,\infty) \times (0,\infty) \right) \to (0,\infty) , \] 
For brevity, we use the form $\av{\omega}{x}{y}$ where $\omega\in[0,1]$ is the weight. It is worth noting that in some cases it is essential to extend the weight parameter beyond the $[0,1]$ segment, see e.g.,~\cite{dyn2017global}. The popular value $\omega = 0.5$ leads to the well-known average expressions such as $\frac{x+y}{2}$ or $\sqrt{xy}$, that correspond to the linear $(1-\omega)x+\omega y$ and geometric $x^{1-\omega} y^\omega $ averages. Many other common families of averages exist, for example the $p$-averages $\left( (1-\omega)x^p+\omega y^p \right)^{\frac{1}{p}}$, which include the above linear and geometric means as special cases. This example is itself a special case of a wider family of averages of the form $F^{-1}\left( (1-\omega)F(x)+\omega F(y) \right)$, when $F$ is an appropriate invertible function. The $p$-averages family comes up when using $F(x) = x^p$. For more details, see~\cite{dyn2011convergence}.

As we generalize the concept of averaging to more intricate settings, we wish to follow the averages' fundamental properties. Namely, the basic properties satisfied by the above bivariate function $\operatorname{av}$,
\begin{enumerate}
    \item Identity on the diagonal: $\av{\omega}{x}{x}=x$.
    \item Symmetry: $\av{\omega}{x}{y}=\av{1-\omega}{y}{x}$.
    \item End points interpolation: $\av{0}{x}{y}=x$ and $\av{1}{x}{y}=y$.
    \item Boundedness: $\min\{x,y\}\leq \av{\omega}{x}{y} \leq\max\{x,y\}$.
\end{enumerate}
Note that the last property, unlike the first three, requires ordering (partial ordering). This harsh requirement can be relaxed by using a metric and modifying the last property to ensure metric-related boundedness. 

Considering the classical case and the absence of a ``natural'' metric in the Hermite setting, we define a Hermite average by adjusting the first three properties listed above. Recall that a pair of elements in our domain is viewed as two samples of a regular, differentiable curve. Thus, there is a native hierarchy associated with the direction of the curve. Under this interpretation, it makes sense to consider an average that is sensitive to orientation. It also calls for a different understanding of the diagonal since an element $\left(p,v\right)\in \mathbb{R}^n\times S^{n-1}$ cannot represent two distinct, yet close enough samples of a regular curve. Therefore, we refer to the diagonal utilizing the notion of limit. The limit we consider is the most fundamental approximation as one approaches a point on a curve. Finally, we treat the symmetry as reversing the curve and so the orientation of the points. Next is the formal definition.
\begin{definition}
\label{Hermite average}
We term $H$ a Hermite average over $ X \subseteq \left(\mathbb{R}^{n}\times S^{n-1}\right)^2$ if $H:[0,1]\times X \to \left(\mathbb{R}^{n}\times S^{n-1}\right)$ and the following holds
for any $ \omega\in [0,1]$ and $\left(\begin{pmatrix}
    p_{0} \\ 
    v_{0} \\ 
    \end{pmatrix},\begin{pmatrix}
    p_{1} \\ 
    v_{1} \\ 
    \end{pmatrix} \right)\in X$, 
\begin{enumerate}
    \item Identity on the ``limit diagonal": 
    \begin{equation*}  H_{\omega }\left(\begin{pmatrix}
p \\ 
v \\ 
\end{pmatrix},\begin{pmatrix}
p+tv \\ 
v \\ 
\end{pmatrix}\right)\xrightarrow{t\longrightarrow 0^{ + }}\begin{pmatrix}
p\\ 
v\\ 
\end{pmatrix}  .\end{equation*}
    \item Symmetry with respect to orientation:
     \[ \text{If} \,\, H_{\omega }\left(
    \begin{pmatrix}
    p_{0} \\ 
    v_{0} \\ 
    \end{pmatrix},\begin{pmatrix}
    p_{1} \\ 
    v_{1} \\
    \end{pmatrix}
    \right) =\begin{pmatrix}
    p \\ 
    v \\ 
    \end{pmatrix} \, \text{then} \, \, H_{1 - \omega }\left(
    \begin{pmatrix}
    p_{1} \\ 
     - v_{1} \\ 
    \end{pmatrix},\begin{pmatrix}
    p_{0} \\ 
     - v_{0} \\
    \end{pmatrix}
    \right) =\begin{pmatrix}
    p \\ 
     - v \\ 
    \end{pmatrix}. \]
    \item End points interpolation: 
    \[ H_{j}\left(\begin{pmatrix}
    p_{0} \\ 
    v_{0} \\ 
    \end{pmatrix},\begin{pmatrix}
    p_{1} \\ 
    v_{1} \\
    \end{pmatrix}\right)
    =\begin{pmatrix}
    p_{j} \\ 
    v_{j} \\ 
    \end{pmatrix}, \quad j=0,1.\]
\end{enumerate} 
\end{definition}

\begin{remark}
\label{metric}
In the presence of an appropriate metric $\tilde{d}$, the classical boundedness property can emerge as 
\[ \tilde{d}\left(\begin{pmatrix}
    p_{j} \\ 
    v_{j} \\ 
    \end{pmatrix},\begin{pmatrix}
p_{\omega } \\ 
v_{\omega } \\ 
\end{pmatrix}\right)\leq \tilde{d}\left(\begin{pmatrix}
    p_{0} \\ 
    v_{0} \\ 
    \end{pmatrix}, \begin{pmatrix}
    p_{1} \\ 
    v_{1} \\ 
    \end{pmatrix}\right), \quad j=0,1, \]
    where $\begin{pmatrix}
p_{\omega } \\ 
v_{\omega } \\ 
\end{pmatrix}=H_\omega\left(\begin{pmatrix}
    p_{0} \\ 
    v_{0} \\ 
    \end{pmatrix},\begin{pmatrix}
    p_{1} \\ 
    v_{1} \\
    \end{pmatrix}\right).$
\end{remark}

\subsection{B\'{e}zier Average}

In this section we introduce the ``B\'{e}zier average". While the definition considers a weighted average for any weight $ \omega \in\left[0,1\right]$, we mainly focus on the special case  $ \omega  = \frac{1}{2}$, which is required for our refinements.

We start our construction with an \textit{admissible} pair $\begin{pmatrix}
\begin{pmatrix}
p_{0} \\ 
v_{0} \\ 
\end{pmatrix},\begin{pmatrix}
p_{1} \\ 
v_{1} \\ 
\end{pmatrix}
\end{pmatrix}$ that satisfies the following two conditions
\begin{equation} \label{eqn:conditions_on_pts}
\begin{split}
    p_0 &\neq p_1, \\
    v_0=v_1=u \text{ or } v_0,v_1,u &\text{ are pair-wise linearly independent,}
\end{split}
\end{equation}
where $u$ is defined in \eqref{u definition}.\\
The full reasoning behind the conditions of~\eqref{eqn:conditions_on_pts} is revealed in the following discussion. Our next step is to generate the B\'{e}zier curve, which we denote by $ b\left(t\right) $, based on the following control points:
\begin{equation} \label{control points}
p_{0}, p_{0} + \alpha v_{0}, p_{1} - \alpha v_{1}, p_{1} .
\end{equation}
Here,
\begin{equation} \label{eqn:alpha}
    \alpha:=\frac{d\left(p_{0},p_{1}\right)}{3\cos^{2}\left(\frac{\theta_{0} + \theta_{1}}{4}\right)},
\end{equation}
and $\theta_0,\theta_1$ are as defined in~\eqref{theta j definition}. Note that the second condition of~\eqref{eqn:conditions_on_pts} guarantees that~\eqref{eqn:alpha} is well-defined since $\theta_0 + \theta_1<2\pi$.

The B\'{e}zier curve $b(t),\ t\in [0,1]$ is given explicitly by
\begin{equation}
\label{bezier curve}
\begin{split}
    b\left(t\right) & = \left(1 - t\right)^{3}p_{0} + 3t\left(1 - t\right)^{2}\left(p_{0} + \alpha v_{0}\right) + 3t^{2}\left(1 - t\right)\left(p_{1} - \alpha v_{1}\right) + t^{3}p_{1} \\
    & =\left(t - 1\right)^{2}\left(2t + 1\right)p_{0} + t^{2}\left(3 - 2t\right)p_{1} + 3t\left(1 - t\right)\alpha\left(\left(1 - t\right)v_{0} - tv_{1}\right),
\end{split}
\end{equation}
and its derivative is thus,
\begin{equation}
\label{tangents curve}
\frac{d}{dt}b\left(t\right) = 6t\left(1 - t\right)\left(p_{1} - p_{0}\right) + 3\left(3t - 1\right)\left(t - 1\right)\alpha v_{0} + 3t\left(3t - 2\right)\alpha v_{1}.
\end{equation}
\begin{remark} \label{remark: linearly independence}
Note that if $ u,v_{0},v_{1}$ are linearly independent as vectors in $\mathbb{R}^{n}$ then $ \frac{d}{dt}b(t)\neq 0$ for any $ t\in\left(0,1\right)$.\ \ Moreover, it can be shown that if two of those vectors are linearly dependent while the third is independent of each one of them, then $ \frac{d}{dt}b(t)\neq 0$ for any $ t\in\left(0,1\right)$.\ \ As well, it can be verified that if $ v_{0} = v_{1} =u$ then $ \frac{d}{dt}b(t) = p_{1} - p_{0}=d\left(p_0,p_1\right)u$.
\end{remark}

We are now ready to present the B\'{e}zier average.
\begin{definition}
\label{def: average def}
The B\'{e}zier average of $\begin{pmatrix}
p_{0} \\ 
v_{0} \\ 
\end{pmatrix}$ and $\begin{pmatrix}
p_{1} \\ 
v_{1} \\ 
\end{pmatrix}$ with weight $ \omega$ is defined as the point and  its normalized tangent vector of the B\'{e}zier curve \eqref{bezier curve} at $t=\omega$.  Namely,
\begin{equation} \label{eqn:bezierAvg}
B_{\omega }\left(\begin{pmatrix}
p_{0} \\ 
v_{0} \\ 
\end{pmatrix},\begin{pmatrix}
p_{1} \\ 
v_{1} \\ 
\end{pmatrix}\right) = \begin{pmatrix}
p_{\omega } \\ 
v_{\omega } \\ 
\end{pmatrix} ,
\end{equation}
where  $ p_{\omega } = b\left(\omega\right)$ and $ v_{\omega } =\frac{b^{'}(\omega )}{\norm{b^{'}(\omega)}}$ (or $ v_{\omega } = 0$ if $ b^{'}\left(\omega\right) = 0$).
\end{definition}

In the spirit of Definition~\ref{def: GHI}, the above means that if $b(t),\ t\in [0,1]$ of~\eqref{bezier curve} is a regular curve, that is $\frac{d}{dt}b(t)$ does not vanish, then $v_{\omega}=T(b(\omega),\omega)$ for any $\omega\in[0,1]$.

Figure~\ref{fig:average} demonstrates the procedure of averaging according to Definition~\ref{def: average def}. In particular, we present two examples that correspond to two different cases. On the left the B\'{e}zier curve which we sample according to Definition~\ref{def: average def} is convex, while on the right it is non-convex. 
\begin{figure}
    \centering
    \includegraphics[width=0.45\textwidth]{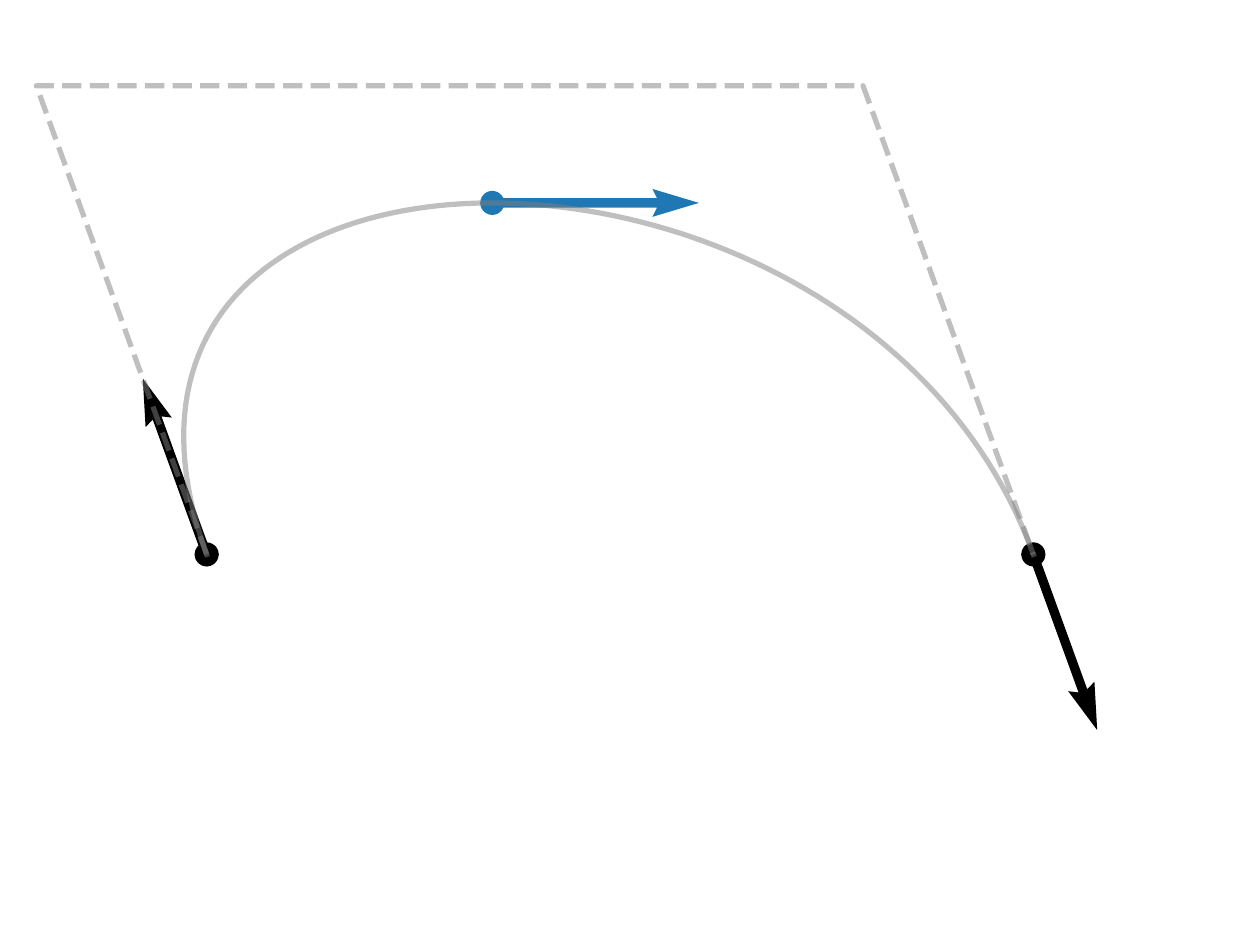}
    \quad
    \includegraphics[width=0.45\textwidth]{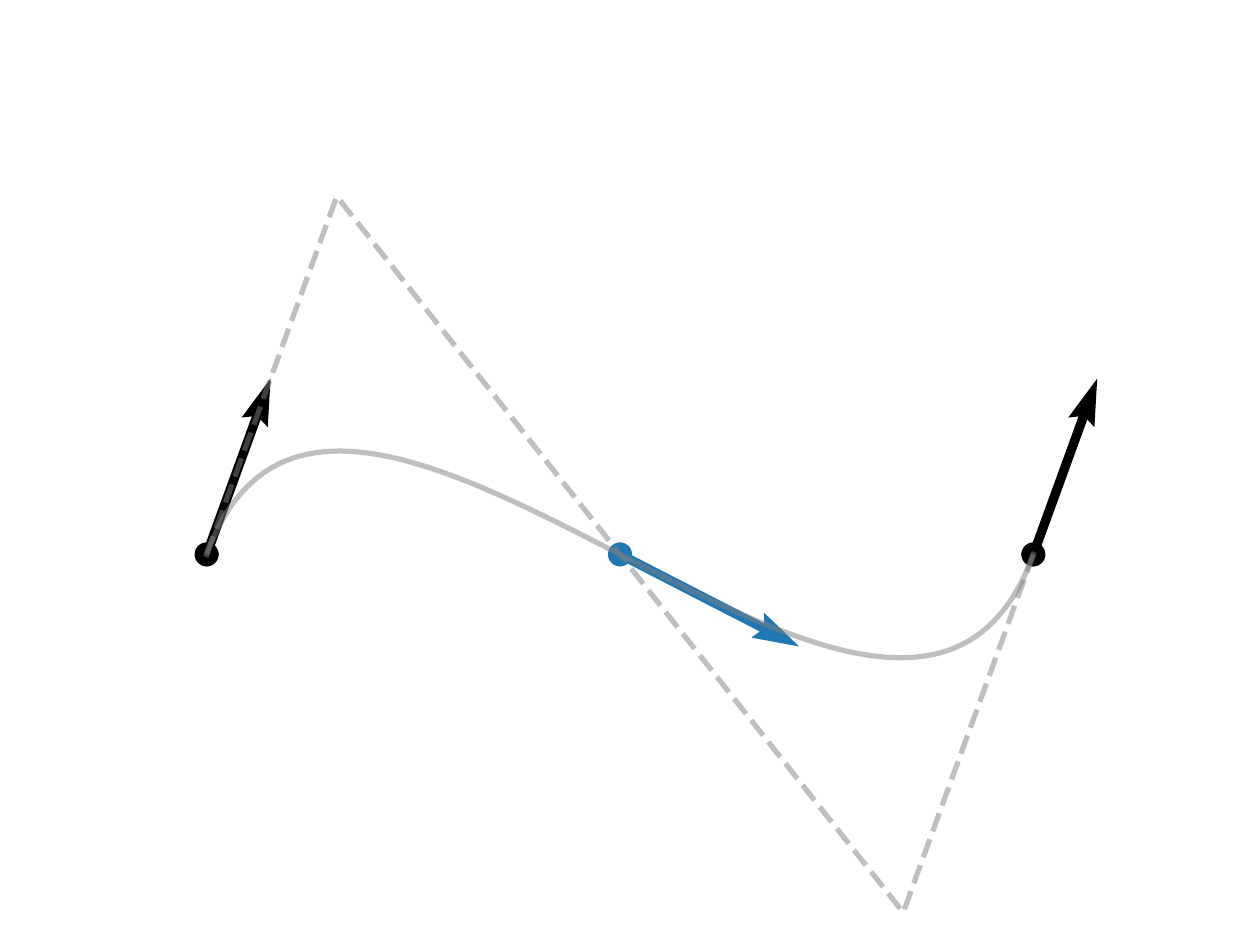}
    \caption{Computing B\'{e}zier average in two examples. In each example the data is in black, the averaged element is in blue, the control polygon and its associated B\'{e}zier curve are in gray.}
    \label{fig:average}
\end{figure}

Definition~\ref{def: average def} does not guarantee that the tangent vector is non-vanishing. That is to say, one can find examples of two pairs of Hermite data where  $ B_{\omega }\left(\begin{pmatrix}
p_{0} \\ 
v_{0} \\ 
\end{pmatrix},\begin{pmatrix}
p_{1} \\ 
v_{1} \\ 
\end{pmatrix}\right) =\begin{pmatrix}
p_{\omega } \\ 
0 \\ 
\end{pmatrix}\not\in \mathbb{R}^n\times S^{n-1}$. However, as is stated in the next lemma, this is not the case for $ \omega  = \frac{1}{2}$. 
\begin{lemma}
\label{non vanishing vectors}
Under the notation of Definition~\ref{def: average def}, 
\begin{equation}
p_{\frac{1}{2}} =\frac{1}{2}\left(p_{0} + p_{1}\right) +\frac{3}{8}\alpha\left(v_{0} - v_{1}\right),
\label{mid-point}
\end{equation}
\begin{equation}
v_{\frac{1}{2}}  =\frac{p_{1} - p_{0} - 0.5\alpha\left(v_{0} + v_{1}\right)}{\norm{ p_{1} - p_{0} - 0.5\alpha\left(v_{0} + v_{1}\right)} }.
\label{mid-vector}
\end{equation}
In particular, $ v_{\frac{1}{2}} \neq 0$.
\end{lemma}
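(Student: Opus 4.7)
The plan is to evaluate the two formulas \eqref{bezier curve} and \eqref{tangents curve} at $t=\tfrac12$ and then argue non-vanishing of the derivative. At $t=\tfrac12$ the Bernstein coefficients in \eqref{bezier curve} become $(t-1)^2(2t+1)=\tfrac12$, $t^2(3-2t)=\tfrac12$, and $3t(1-t)((1-t)v_0-tv_1)=\tfrac38(v_0-v_1)$, which immediately gives \eqref{mid-point}. Likewise, at $t=\tfrac12$ the coefficients in \eqref{tangents curve} are $6t(1-t)=\tfrac32$ and $3(3t-1)(t-1)=3t(3t-2)=-\tfrac34$, so
\[
b'(\tfrac12)=\tfrac32\bigl[(p_1-p_0)-\tfrac{\alpha}{2}(v_0+v_1)\bigr].
\]
Once $b'(\tfrac12)\neq 0$ is established, dividing by its norm produces \eqref{mid-vector}, since the positive factor $\tfrac32$ cancels.

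The core task is therefore to show $(p_1-p_0)-\tfrac{\alpha}{2}(v_0+v_1)\neq 0$ under the admissibility conditions \eqref{eqn:conditions_on_pts}. Split into the two cases of those conditions. In the first case, $v_0=v_1=u$, so $\theta_0=\theta_1=0$ and $\alpha=d(p_0,p_1)/3$; direct substitution yields $(p_1-p_0)-\alpha u=(d(p_0,p_1)-\alpha)u=\tfrac{2}{3}d(p_0,p_1)\,u\neq 0$. In the second case, $u,v_0,v_1$ are pairwise linearly independent, and I would argue by contradiction.

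Suppose $d(p_0,p_1)\,u=\tfrac{\alpha}{2}(v_0+v_1)$. First rule out $v_0+v_1=0$, which would force $v_1=-v_0$, contradicting pairwise independence. Hence $u\in\mathrm{span}(v_0,v_1)$. Taking the inner product of both sides with $v_0$ and then with $v_1$, and using $\|u\|=\|v_0\|=\|v_1\|=1$, gives $\cos\theta_0=\cos\theta_1$, so $\theta_0=\theta_1=:\theta^*$ (both in $[0,\pi)$ by the independence of $u$ with each $v_j$). Decomposing $v_j=\cos\theta^*\,u+\sin\theta^*\,w_j$ with $w_j\perp u$ unit vectors, the requirement that $v_0+v_1$ be parallel to $u$ forces $w_1=-w_0$, whence the two vectors are reflections about $u$ and the angle between them is $\theta=2\theta^*$. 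Taking norms in the displayed identity yields $\alpha\cos(\theta/2)=d(p_0,p_1)$, i.e. $\alpha=d(p_0,p_1)/\cos\theta^*$.

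The contradiction then comes from comparing with the definition \eqref{eqn:alpha}, which here reads $\alpha=d(p_0,p_1)/\bigl(3\cos^2(\theta^*/2)\bigr)$: equating the two expressions and using the half-angle identity $\cos\theta^*=2\cos^2(\theta^*/2)-1$ leads to $\cos^2(\theta^*/2)=-1$, which is impossible. The main obstacle is this case (ii): one must set up the correct orthogonal decomposition, handle the possibility $v_0+v_1=0$ separately, and identify the precise algebraic mismatch between the identity forced by $b'(\tfrac12)=0$ and the specific choice of $\alpha$ in \eqref{eqn:alpha}. The remaining algebra, including the derivation of \eqref{mid-point} and the normalization step to obtain \eqref{mid-vector}, is just substitution.
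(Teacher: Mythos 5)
Your proposal is correct, and its crux is genuinely different from the paper's. Both begin identically, by evaluating \eqref{bezier curve} and \eqref{tangents curve} at $t=\tfrac12$ to get \eqref{mid-point} and $b'(\tfrac12)=\tfrac32\bigl[(p_1-p_0)-\tfrac{\alpha}{2}(v_0+v_1)\bigr]$. For the non-vanishing, however, the paper makes no case distinction: it computes $\bigl\Vert p_1-p_0-\tfrac12\alpha(v_0+v_1)\bigr\Vert^2$ explicitly as $\Vert p_1-p_0\Vert^2$ times a trigonometric expression in $\theta_0,\theta_1,\theta$, and shows that the quantity $18\cos^4\left(\frac{\theta_0+\theta_1}{4}\right)+1+\cos(\theta)-6\cos^2\left(\frac{\theta_0+\theta_1}{4}\right)(\cos(\theta_0)+\cos(\theta_1))$ is strictly positive, by dropping $1+\cos(\theta)\geq0$ and applying sum-to-product identities. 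You instead split along the two alternatives of the admissibility conditions~\eqref{eqn:conditions_on_pts} and, in the pairwise-independent case, argue structurally: vanishing of $b'(\tfrac12)$ forces $\cos\theta_0=\cos\theta_1$, hence a symmetric configuration $v_j=\cos\theta^{*}u+\sin\theta^{*}w_j$ with $w_1=-w_0$, which pins down $\alpha\cos\theta^{*}=d(p_0,p_1)$ and collides with \eqref{eqn:alpha} via $3\cos^2(\theta^{*}/2)=2\cos^2(\theta^{*}/2)-1$, i.e.\ $\cos^2(\theta^{*}/2)=-1$. Your route is more geometric and identifies exactly which configuration would have to occur and why the specific choice of $\alpha$ excludes it; the paper's route is a uniform quantitative computation that proves slightly more (positivity for all data with $p_0\neq p_1$ and $\theta_0+\theta_1<2\pi$, with no independence assumption), and the positive expression it isolates is reused elsewhere, e.g.\ under the square root in \eqref{theta01}. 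Two cosmetic points in your case (ii): pairwise independence gives $\theta_j\in(0,\pi)$, not $[0,\pi)$ (this is what makes $\sin\theta^{*}\neq0$ and the $w_j$ well defined); and rather than ``taking norms'' and asserting $\theta=2\theta^{*}$ (which fails as stated when $\theta^{*}>\pi/2$, where $\theta=2\pi-2\theta^{*}$), take the inner product of the identity with $u$ to get $d(p_0,p_1)=\alpha\cos\theta^{*}$ directly, which also forces $\cos\theta^{*}>0$. Neither point affects the validity of your final contradiction.
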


Next we claim that the B\'{e}zier average (Definition~\ref{Hermite average}) is a Hermite average over $X$ (Definition~\ref{def: average def}). For the B\'{e}zier average we define $X$ to be the set of all pairs in $\left(\mathbb{R}^{n}\times S^{n-1}\right)^2$ satisfying conditions~\eqref{eqn:conditions_on_pts} and producing non vanishing averaged vectors for any weight in $[0,1]$. Note that by Remark~\ref{remark: linearly independence} $X$ is not empty. A discussion about the set $X$ is differed to Appendix~\ref{app:subsec_conjecture}. 
\begin{prop}
\label{Bezier average is Hermite average}
    The B\'{e}zier average is a Hermite average over $X$.
\end{prop}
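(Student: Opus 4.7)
The plan is to verify the three defining properties of Definition~\ref{Hermite average} one at a time, leveraging classical properties of cubic Bézier curves in each case. Throughout, I would work with the explicit formulas~\eqref{bezier curve} and~\eqref{tangents curve} together with the definition of $\alpha$ in~\eqref{eqn:alpha}.

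For \emph{end points interpolation} ($\omega=0,1$), I would evaluate $b(0)=p_0$ and $b(1)=p_1$ from~\eqref{bezier curve} directly, and then compute $b'(0)=3\alpha v_0$ and $b'(1)=3\alpha v_1$ from~\eqref{tangents curve}. Since admissibility forces $\alpha>0$, normalizing yields $v_0$ and $v_1$, respectively.

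For \emph{symmetry with respect to orientation}, the key observation is that the construction is invariant (up to reversal) under the transformation $(p_0,v_0),(p_1,v_1)\mapsto (p_1,-v_1),(p_0,-v_0)$. Indeed, under this swap-and-negate operation the vector $u$ from~\eqref{u definition} flips sign, while the two deviation angles $\theta_0,\theta_1$ swap. Hence by~\eqref{eqn:alpha} the parameter $\alpha$ is unchanged, and the new control points from~\eqref{control points} become $p_1,\,p_1-\alpha v_1,\,p_0+\alpha v_0,\,p_0$, i.e.\ the reverse of the original sequence. Using the standard fact that reversing the control polygon of a Bézier curve yields the same curve with the parametrization $t\mapsto 1-t$, I would conclude $\tilde b(s)=b(1-s)$, whence $\tilde b(1-\omega)=b(\omega)=p_\omega$ and $\tilde b'(1-\omega)=-b'(\omega)$. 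Normalizing the tangent then gives $-v_\omega$, which is exactly the required symmetry.

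For the \emph{limit diagonal} property, I would specialize to $v_0=v_1=v$ and $p_1-p_0=tv$ with $t>0$. Then $u=v$, so $\theta_0=\theta_1=0$ and $\alpha=t/3$ by~\eqref{eqn:alpha}. Consequently the four control points in~\eqref{control points} are all collinear along the line $\{p+sv:s\in\mathbb{R}\}$. A short computation with~\eqref{bezier curve} using the binomial identity $[(1-\omega)+\omega]^2=1$ collapses to $b(\omega)=p+t\omega\, v$, and from~\eqref{tangents curve} one obtains $b'(\omega)=t\cdot q(\omega)v$ for a positive scalar $q(\omega)$ (for $\omega\in(0,1)$), so after normalization the tangent equals $v$. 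Letting $t\to 0^+$ gives $B_\omega\to(p,v)$, as required.

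The only step that requires real care is the symmetry argument, because one must verify simultaneously that (i) $\alpha$ is invariant under swap-and-negate and (ii) the reversed control polygon generates the reversed Bézier parametrization. Once these two points are in place, the remaining two properties reduce to direct substitutions into~\eqref{bezier curve}--\eqref{tangents curve}, and Lemma~\ref{non vanishing vectors} together with the hypothesis that the pair lies in $X$ ensures that all the normalizations involved are well defined.
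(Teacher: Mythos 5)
Your proposal is correct and takes essentially the same route as the paper's proof: end point interpolation by direct evaluation of \eqref{bezier curve}--\eqref{tangents curve} (with $\alpha>0$ making the normalization valid), symmetry via the observation that swap-and-negate preserves $\alpha$ and reverses the control polygon, hence yields the same B\'{e}zier curve under $t\mapsto 1-t$, and the limit diagonal via the collinear configuration giving $b(\omega)=p+t\omega v$. The only cosmetic difference is that the paper obtains the limit-diagonal property by invoking the lines-preservation result (Lemma~\ref{line reconstruction lemma}) instead of redoing the collinear computation inline, as you do.
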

\begin{remark} \label{alternative average}
The value of $ \alpha$ we chose for the definition of the B\'{e}zier average, is suggested in [4] in order to approximate circular arcs in $\mathbb{R}^{2}$ by B\'{e}zier curves. In case $n=2$ and under certain assumptions about the input, this choice coincides with the choice of $\alpha=\frac{d\left(p_{0},p_{1}\right)}{3\cos^{2}\left(\frac{\theta}{4}\right)}$. The latter choice of $\alpha$ yields the average defined in \cite{lipovetsky2021subdivision}, which can also be shown to be a Hermite average over a suitable set.
\end{remark}

\subsection{Geometric properties of the B\'{e}zier average}

We conclude this section by introducing three additional properties of the B\'{e}zier average, which are of great value in the context of generating curves. The first property shows that averaging two point-ntangent pairs is invariant under similarities appropriate for Hermite data. This essential property of the B\'{e}zier average follows from the invariance of B\'{e}zier curves under affine transformation.
\begin{lemma} [Similarity invariance]
\label{lemma: similarity invariance}
 The B\'{e}zier average is invariant under similarities in the following sense:
\begin{enumerate}
    \item The B\'{e}zier average is invariant under isometries applied to both $\mathbb{R}^{n}$ and $S^{n-1}$. That is, given an isometry 
    $\Phi:\mathbb{R}^{n}\longrightarrow\mathbb{R}^{n}$,
    \begin{equation}
    \label{similarity1}
        \tilde{\Phi}\left(B_\omega\left(\begin{pmatrix}
        p_0\\ v_0\\
        \end{pmatrix},\begin{pmatrix}
        p_1\\ v_1\\
        \end{pmatrix}\right)\right)=B_\omega\left(\tilde{\Phi}\begin{pmatrix}
        p_0\\ v_0\\
        \end{pmatrix},\tilde{\Phi}\begin{pmatrix}
        p_1\\ v_1\\
        \end{pmatrix}\right),
    \end{equation}
    where $\tilde{\Phi}\begin{pmatrix}
    p\\ v\\
    \end{pmatrix}:=\begin{pmatrix}
    \Phi(p)\\ \Phi(v)-\Phi(0)\\
    \end{pmatrix}$.
    \item The B\'{e}zier average is invariant under scaling: if $B_\omega\left(\begin{pmatrix}    p_0\\ v_0\\
        \end{pmatrix},\begin{pmatrix}
        p_1\\ v_1\\
        \end{pmatrix}\right)=\begin{pmatrix}
        p_\omega\\v_\omega\\
        \end{pmatrix}$, for any $a>0$, then,    \begin{equation}
        \label{similarity 2}
        B_\omega\left(\begin{pmatrix}
        a\cdot p_0\\ v_0\\
        \end{pmatrix},\begin{pmatrix}
        a\cdot p_1\\ v_1\\
        \end{pmatrix}\right)=\begin{pmatrix}
        a\cdot p_\omega\\ v_\omega\\
        \end{pmatrix}.
    \end{equation}
\end{enumerate}
\end{lemma}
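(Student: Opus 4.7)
The plan is to reduce both parts of the lemma to the well-known affine invariance of B\'{e}zier curves, after checking that the scalar $\alpha$ in~\eqref{eqn:alpha} transforms correctly under each map. Recall that a B\'{e}zier curve is a convex combination of its control points, so for any affine map $\Psi$ one has $\Psi\circ b = \tilde{b}$, where $\tilde{b}$ is the B\'{e}zier curve built from the $\Psi$-transformed control points; if $\Psi$ is linear its derivative satisfies $\tilde{b}'(t)=\Psi(b'(t))$.

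For part (1), I would first write the isometry in the standard form $\Phi(x)=Ux+c$ with $U\in O(n)$, so that $\Phi(v)-\Phi(0)=Uv\in S^{n-1}$, confirming that $\tilde{\Phi}$ maps $\mathbb{R}^{n}\times S^{n-1}$ into itself. Next I would verify that each ingredient of~\eqref{eqn:alpha} is $\Phi$-invariant: the Euclidean distance $d(p_0,p_1)$ is preserved because $\Phi$ is an isometry, and since $U$ preserves inner products and sends the normalized difference vector $u$ from~\eqref{u definition} to the analogous vector for the transformed data, the angles $\theta_0,\theta_1$ defined in~\eqref{theta j definition} are preserved as well. Consequently $\alpha$ is unchanged, and so the control points from~\eqref{control points} built from the transformed pair are exactly $\Phi$ applied to the original control points. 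Affine invariance of B\'{e}zier curves then produces $\Phi(p_\omega)$ as the point component of the transformed average; for the ntangent component, the derivative picks up a factor of $U$, which commutes with normalization because $U$ is orthogonal, delivering $Uv_\omega=\Phi(v_\omega)-\Phi(0)$, as required by the definition of $\tilde{\Phi}$.

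For part (2), scaling by $a>0$ multiplies $d(p_0,p_1)$ by $a$ while leaving $\theta_0,\theta_1$ unchanged, since those are angular quantities depending only on directions. Hence the rescaled parameter equals $a\alpha$, and the new control points coincide with $a$ times the originals. The resulting B\'{e}zier curve is therefore $a\cdot b(t)$, with derivative $a\cdot b'(t)$; evaluating at $\omega$ yields $ap_\omega$, while the positive scalar $a$ cancels under normalization and returns $v_\omega$.

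The argument is essentially bookkeeping; the only delicate point is the dual behaviour of $\alpha$, which is invariant under isometries but scales by $a$ under scalar multiplication of the points. Both facts follow transparently from the isometric invariance of Euclidean distance, the scaling law of $d(p_0,p_1)$, and the direction-only nature of $\theta_0,\theta_1$. Since the lemma assumes an admissible pair, the averaged tangent $v_\omega$ is a genuine unit vector and the normalization step in each case is well-defined.
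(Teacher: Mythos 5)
Your proof is correct and follows essentially the same route as the paper: both reduce the claim to the affine invariance of B\'{e}zier curves by checking that $\alpha$ is preserved under isometries (and scales by $a$ under scaling), so that the transformed control points are exactly the image of the originals. You merely spell out a few steps the paper leaves implicit, such as the invariance of $\theta_0,\theta_1$ and the commuting of the orthogonal part with normalization in the ntangent component.
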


The following two results show two important geometric properties of the average.
\begin{lemma}[Lines preservation]
\label{line reconstruction lemma}
    Let $ \begin{pmatrix}
    p_{0} \\ 
    v_{0} \\ 
    \end{pmatrix},\begin{pmatrix}
    p_{1} \\ 
    v_{1} \\ 
    \end{pmatrix}$ be two geometric Hermite samples from a straight line, $L_u$. 
    Then, the B\'{e}zier average coincides with the linear average of each component, namely, 
    \begin{equation}   \label{lines reconstraction}
    B_{\omega } \left( \begin{pmatrix}
    p_{0} \\ 
    v_0\\ 
    \end{pmatrix},\begin{pmatrix}
    p_{1} \\ 
    v_{1} \\ 
    \end{pmatrix} \right) =\begin{pmatrix}
    \left(1 - \omega\right)p_{0} + \omega p_{1} \\ 
    u \\ 
    \end{pmatrix}, \quad \omega \in [0,1].
    \end{equation}
\end{lemma}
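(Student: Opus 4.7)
My plan is to reduce the cubic Bézier curve \eqref{bezier curve} to the linear interpolant of $p_0$ and $p_1$ by showing that, for Hermite data sampled from the line $L_u$, the four control points in \eqref{control points} are precisely the uniform partition of the segment from $p_0$ to $p_1$ into three equal pieces. From this, both components of the average drop out immediately.

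First I would argue that $v_0 = v_1 = u$, where $u = (p_1-p_0)/\|p_1-p_0\|$ as in \eqref{u definition}. Since the sampled curve is the line $L_u$ and the sample parameters satisfy $t_0 < t_1$, the vector $p_1 - p_0$ is a positive multiple of the line's direction, and the unit tangent at each sample point is $u$. Consequently $\theta_0 = \theta_1 = 0$ by \eqref{theta j definition}, and \eqref{eqn:alpha} gives $\alpha = d(p_0,p_1)/3$. It follows that $\alpha v_0 = \alpha v_1 = \frac{1}{3}(p_1-p_0)$, so the control points \eqref{control points} become $p_0,\ \frac{2p_0+p_1}{3},\ \frac{p_0+2p_1}{3},\ p_1$.

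Next I would substitute $v_0 = v_1 = u$ and the value of $\alpha$ into \eqref{bezier curve}. The contribution $3t(1-t)\alpha\bigl((1-t)v_0 - tv_1\bigr)$ collapses to $t(1-t)(1-2t)(p_1-p_0)$, and combining it with $(t-1)^2(2t+1)p_0 + t^2(3-2t)p_1$ gives, after collecting terms in $p_0$ and $p_1$, the identity $b(t) = (1-t)p_0 + t p_1$. Differentiating yields $b'(t) = p_1 - p_0 \neq 0$, so normalizing produces $v_\omega = u$. Together with $p_\omega = b(\omega) = (1-\omega)p_0 + \omega p_1$, this is exactly \eqref{lines reconstraction}.

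The only genuinely delicate step is justifying $v_0 = v_1 = u$ rather than $-u$; after that, the argument is a short polynomial identity made possible by the specific choice of $\alpha$ in \eqref{eqn:alpha}, which becomes $d(p_0,p_1)/3$ precisely when both tangents coincide with the secant direction. As a shortcut, one could instead invoke the classical property that a cubic Bézier curve with equally spaced collinear control points coincides with the linear parametrization of the underlying segment, bypassing the polynomial manipulation altogether.
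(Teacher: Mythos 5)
Your proof is correct and takes essentially the same route as the paper's: the paper also reduces to $v_{0}=v_{1}=u$, notes that then $\alpha=\tfrac{1}{3}\norm{p_{1}-p_{0}}$, and obtains \eqref{lines reconstraction} by direct substitution into \eqref{bezier curve} and \eqref{tangents curve}. You merely make explicit the polynomial cancellation and the orientation argument (why the tangents equal $u$ rather than $-u$) that the paper compresses into ``simple calculations.''
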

Note that the average in~\eqref{lines reconstraction} is a point-ntangent pair 
from the line $L_u$. 

\begin{lemma}[Circles preservation]
\label{circle reconstruction lemma}
    Assume $\begin{pmatrix}
    p_{0} \\ 
    v_{0} \\ 
    \end{pmatrix},\begin{pmatrix}
    p_{1} \\ 
    v_{1} \\ 
    \end{pmatrix}$
    are samples of a circle and its ntangents. Then, their average at $\omega=\frac{1}{2}$ consists of the mid-point and its ntangent of the circle-arc, defined by $p_{0}$, $p_{1}$ and $v_{0}$, $v_{1}$.
\end{lemma}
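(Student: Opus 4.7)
The plan is to use the similarity invariance of the B\'ezier average (Lemma~\ref{lemma: similarity invariance}) to reduce the claim to a canonical symmetric configuration on the unit circle in $\mathbb{R}^{2}$, and then verify it by a direct computation using the explicit formulas~\eqref{mid-point}--\eqref{mid-vector}.

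First I would pass to $\mathbb{R}^{2}$. The four control points $p_{0},p_{0}+\alpha v_{0},p_{1}-\alpha v_{1},p_{1}$ from~\eqref{control points} all lie in the affine $2$-plane $\Pi$ containing the circle, since $v_{0}$ and $v_{1}$ are tangent to the circle. Hence the whole B\'ezier curve~\eqref{bezier curve}---and in particular $p_{\frac{1}{2}}$ and $v_{\frac{1}{2}}$---stays inside $\Pi$. Using the isometry-invariance part of Lemma~\ref{lemma: similarity invariance} I would apply a rigid motion of $\mathbb{R}^{n}$ sending the center of the circle to the origin and $\Pi$ onto $\operatorname{span}(e_{1},e_{2})$, then use the scaling part to normalize the radius to $1$, and finally a planar rotation that places the midpoint of the target arc at $(0,1)$. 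After these normalizations the data takes the symmetric form
\begin{equation*}
p_{0}=(-\sin\phi,\cos\phi),\;\; v_{0}=(\cos\phi,\sin\phi),\;\; p_{1}=(\sin\phi,\cos\phi),\;\; v_{1}=(\cos\phi,-\sin\phi),
\end{equation*}
for some half-arc angle $\phi\in(0,\pi)$, with target midpoint and ntangent $((0,1),(1,0))$.

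Next I would substitute into~\eqref{mid-point} and~\eqref{mid-vector}. A direct calculation gives $u=(1,0)$, $\theta_{0}=\theta_{1}=\phi$, and hence $\alpha=\tfrac{2\sin\phi}{3\cos^{2}(\phi/2)}$. The horizontal component of $\tfrac{1}{2}(p_{0}+p_{1})+\tfrac{3}{8}\alpha(v_{0}-v_{1})$ vanishes by symmetry, while its vertical component collapses to $\cos\phi+2\sin^{2}(\phi/2)=1$ via the half-angle identities $\sin\phi=2\sin(\phi/2)\cos(\phi/2)$ and $\cos\phi=1-2\sin^{2}(\phi/2)$. For the tangent, $p_{1}-p_{0}-\tfrac{1}{2}\alpha(v_{0}+v_{1})$ is a positive multiple of $(1,0)$, so $v_{\frac{1}{2}}=(1,0)$. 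The averaged pair therefore coincides with the midpoint of the circle-arc and its ntangent in the canonical configuration, and the similarity invariance transfers this conclusion back to the original data.

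The main obstacle is the bookkeeping in the reduction step: one has to check that the tangent coordinates transform correctly under $\tilde\Phi$ in Lemma~\ref{lemma: similarity invariance}, and that orientation is handled carefully---specifically, that the chosen rigid motions send the particular oriented arc ``defined by $p_{0},p_{1},v_{0},v_{1}$'' to the clockwise-traversed upper arc of the unit circle treated above (a reflection in $\Pi$ may be needed to match the prescribed orientation). Once these points are settled, the remaining verification is the elementary trigonometric identity shown above.
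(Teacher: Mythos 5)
Your proof is correct, and it shares the paper's overall strategy---reduce via Lemma~\ref{lemma: similarity invariance} to the unit circle in the plane, then verify on a normalized configuration---but the execution of the computational step is genuinely different. The paper places the data asymmetrically ($p_0=(1,0)$, $v_0=(0,1)$, $p_1$ at angle $\varphi\in(0,2\pi)$), which forces a case analysis ($\varphi\le\pi$ versus $\varphi>\pi$, both for the relation between $\theta$ and $\varphi$ and for the sign in $v_0+v_1=\pm 2\cos(\theta/2)\,u$), and it does not prove the point statement by direct computation: after showing $\alpha=\tfrac{4}{3}\tan(\varphi/4)$ it invokes Theorem~1 of the cited work of Dokken et al.\ to conclude that $b(\tfrac12)$ is the arc midpoint, verifying only the tangent statement by hand. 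Your symmetric normalization (arc midpoint at $(0,1)$, endpoints at angles $\tfrac{\pi}{2}\pm\phi$) makes the whole lemma a self-contained elementary computation: $\theta_0=\theta_1=\phi$, the horizontal component of $p_{\frac{1}{2}}$ in~\eqref{mid-point} vanishes by symmetry, the vertical one is $\cos\phi+2\sin^2(\phi/2)=1$, and in~\eqref{mid-vector} the vector $p_1-p_0-\tfrac12\alpha(v_0+v_1)$ equals $2\sin\phi\left(1-\frac{\cos\phi}{3\cos^2(\phi/2)}\right)(1,0)$ with the bracket at least $\tfrac13$ when $\cos\phi>0$ and at least $1$ otherwise, so no case split and no external reference is needed; your $\alpha=\frac{2\sin\phi}{3\cos^2(\phi/2)}=\tfrac{4}{3}\tan(\phi/2)$ agrees with the paper's value under $\varphi=2\phi$, and your range $2\phi\in(0,2\pi)$ covers exactly the arcs the paper treats. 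What the paper's route buys is the explicit link to the classical circle-approximation parameter of Dokken et al.; what yours buys is a uniform, citation-free argument. You were also right to flag the orientation bookkeeping: a reflection within the circle's plane is an isometry, hence covered by Lemma~\ref{lemma: similarity invariance}, and it converts a counterclockwise-oriented sample into your clockwise canonical form.
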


\section{Subdivision schemes based on the B\'{e}zier Average}
\label{sec: subdivision}

 In the previous section we describe the concept of Hermite average and introduce such an average --- the B\'{e}zier Average. We also present some of the main properties of the B\'{e}zier average. This essential building block is the first step in constructing curves' approximations. The next step is to use an approximation operator, which can be defined in terms of binary averages, and replace the binary average by a Hermite average. As approximation operators, we use subdivision schemes, which enjoy simple implementation and are used extensively for modeling, approximation, and multiscale representations of curves. 

As an illustrative method, the main subdivision scheme we utilize is the interpolatory two-point scheme. This simple method demonstrates the application of the B\'{e}zier average and how the average's properties are inherited by the approximation process. We define the new refinement rules and prove the convergence of this subdivision scheme for any initial admissible data. 

\subsection{From Hermite average to Hermite subdivision schemes}
\label{subsec: IHB}

We start with the following lemma, which, together with Lemma~\ref{non vanishing vectors}, asserts that the process of midpoint averaging can be performed repeatedly.

\begin{lemma}
\label{lemma: reaveraging}
Assume the pair $\begin{pmatrix}
\begin{pmatrix}
p_{0} \\ 
v_{0} \\ 
\end{pmatrix},\begin{pmatrix}
p_{1} \\ 
v_{1} \\ 
\end{pmatrix}
\end{pmatrix}$ satisfies conditions~\eqref{eqn:conditions_on_pts}. Then, in the notations of Definition~\ref{def: average def}, the two pairs
$\left(\begin{pmatrix}
p_0\\v_0\\
\end{pmatrix},\begin{pmatrix}
p_{\frac{1}{2}}\\v_{\frac{1}{2}}\\
\end{pmatrix}\right)$ and $\left(\begin{pmatrix}
p_{\frac{1}{2}}\\v_{\frac{1}{2}}\\
\end{pmatrix},\begin{pmatrix}
p_1\\v_1\\
\end{pmatrix}\right)$ also satisfy conditions~\eqref{eqn:conditions_on_pts}.
\end{lemma}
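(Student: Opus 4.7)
\medskip
\noindent\textbf{Proof plan.}
The plan is to split along the disjunction in~\eqref{eqn:conditions_on_pts}. If the input pair is of the first type, $v_0 = v_1 = u$, then Lemma~\ref{line reconstruction lemma} applied at $\omega = \tfrac{1}{2}$ immediately gives $p_{1/2} = \tfrac{1}{2}(p_0+p_1)$ and $v_{1/2} = u$. Since $p_0 \neq p_1$, the new midpoint is distinct from both $p_0$ and $p_1$, and the unit chord directions from $p_0$ to $p_{1/2}$ and from $p_{1/2}$ to $p_1$ are both $u$. Hence both subpairs again fall in the first alternative of~\eqref{eqn:conditions_on_pts}, and this case is finished.

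The remaining case is the generic one: $v_0, v_1, u$ pairwise linearly independent. The first sub-step is to check $p_{1/2} \neq p_0$ and $p_{1/2} \neq p_1$. Using the closed form from Lemma~\ref{non vanishing vectors},
\[
p_{1/2} - p_0 \;=\; \tfrac{d}{2}\,u + \tfrac{3\alpha}{8}(v_0 - v_1), \qquad p_1 - p_{1/2} \;=\; \tfrac{d}{2}\,u - \tfrac{3\alpha}{8}(v_0 - v_1),
\]
with $d := \|p_1-p_0\| > 0$. If $p_{1/2}-p_0 = 0$ then $v_0 - v_1 = c\,u$ for some $c$. The case $c = 0$ gives $v_0 = v_1$, contradicting pairwise independence. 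For $c \neq 0$, using $\|v_0\| = \|v_1\| = 1$ forces $c = -2\cos\theta_1$ and hence $\cos\theta_0 = -\cos\theta_1$, i.e. $\theta_0 + \theta_1 = \pi$; substituting the explicit value of $\alpha$ from~\eqref{eqn:alpha} into $\tfrac{d}{2}u = -\tfrac{3\alpha}{8}c\,u$ then forces $\cos\theta_1 = 1$, i.e. $v_1 = u$, again contradicting pairwise independence. The argument $p_{1/2} \neq p_1$ is symmetric.

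The second sub-step is to verify the vector condition in~\eqref{eqn:conditions_on_pts} for the left subpair $(p_0,v_0),(p_{1/2},v_{1/2})$, the right one being symmetric. Let $u_L := (p_{1/2}-p_0)/\|p_{1/2}-p_0\|$. The goal is to show that either $v_0 = v_{1/2} = u_L$ or the triple $\{v_0, v_{1/2}, u_L\}$ is pairwise linearly independent. The former possibility would force $b|_{[0,1/2]}$ to be a straight line in direction $v_0$; however, from the tangent formula~\eqref{tangents curve}, $b'(t)\parallel v_0$ for all $t\in[0,1/2]$ requires both $p_1-p_0 \parallel v_0$ and $v_1 \parallel v_0$, which violate pairwise independence of $u, v_0, v_1$. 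Hence it is enough to rule out each of the three ``partial parallelism'' scenarios $v_0 \parallel v_{1/2}$, $v_0 \parallel u_L$, and $v_{1/2} \parallel u_L$. For each, I would substitute the explicit formulas for $v_{1/2}$ and $p_{1/2} - p_0$ (from Lemma~\ref{non vanishing vectors}) into the dependency relation, reduce to a linear relation among $u, v_0, v_1$ with coefficients depending on $d$ and $\alpha$, and then use the specific value~\eqref{eqn:alpha} of $\alpha$ together with $\|v_0\|=\|v_1\|=1$ to derive a contradiction with one of the three pairwise independencies.

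\medskip
\noindent\textbf{Main obstacle.}
The easy part is the collinear case and the distinctness of points, which follow from direct calculation. The delicate part is the last step: excluding partial parallelism of the new triple $\{v_0, v_{1/2}, u_L\}$ using only the pairwise (rather than full) linear independence of $\{u, v_0, v_1\}$. This is especially subtle in the planar setting $n=2$, where $u, v_0, v_1$ are necessarily linearly dependent, so the dependency relations cannot be discarded on dimension grounds and must be closed using the precise choice of $\alpha$ in~\eqref{eqn:alpha}.
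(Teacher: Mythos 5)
Your case split, your treatment of the collinear case, and your distinctness argument are all correct, and the distinctness step is genuinely different from — and considerably shorter than — the paper's. Where you reduce $p_{1/2}=p_0$ to $v_0-v_1=cu$, force $c=-2\cos\theta_1$ and $\theta_0+\theta_1=\pi$ from the unit-norm constraint, and then use the specific $\alpha$ of~\eqref{eqn:alpha} to conclude $\cos\theta_1=1$, the paper instead writes out $\|p_{1/2}-p_0\|$ in closed trigonometric form and proves positivity of the resulting expression by a two-variable critical-point-and-boundary analysis of an auxiliary function $h(t,s)$ over a rectangle. Your three-line algebraic route, which in passing identifies the unique degenerate configuration $v_0=-u$, $v_1=u$ at which $p_{1/2}=p_0$ actually occurs, is a real simplification of that part.

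The last step, however, contains a genuine gap, and it is not merely that the computations are left as a plan ("I would substitute\dots"): the plan cannot be completed, because the strong conclusion you target — that $\{v_0,v_{1/2},u_L\}$ is pairwise linearly independent whenever $\{v_0,v_1,u\}$ is — is false. Take $p_0=(0,0)$, $p_1=(1,0)$ (so $u=(1,0)$, $d=1$), $v_0=(\cos\theta_0,\sin\theta_0)$ and $v_1=(\cos\theta_1,-\sin\theta_1)$. Using the formula for $p_{1/2}$ from Lemma~\ref{non vanishing vectors}, one computes that $p_{1/2}-p_0$ is parallel to $v_0$ exactly when
\[
\frac{\sin\theta_0}{2}=\frac{3\alpha}{8}\,\sin(\theta_0+\theta_1),
\qquad \alpha=\frac{1}{3\cos^{2}\left(\frac{\theta_0+\theta_1}{4}\right)}.
\]
For $\theta_0=0.2$ the difference of the two sides changes sign as $\theta_1$ runs over $[0.6,0.7]$, so by the intermediate value theorem there is a root $\theta_1^{\ast}\approx 0.661$; at this configuration $v_0,v_1,u$ are pairwise independent, yet $u_L=v_0$ (a positive multiple, in fact), while $v_{1/2}\propto(1,0)-\frac{\alpha}{2}(v_0+v_1)$ points at angle $\approx 0.105$ and is parallel to neither. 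So the new triple is neither all-equal nor pairwise independent, and no choice of substitutions can rule this scenario out.

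This is precisely why the paper's proof takes a different and weaker tack: its item (ii) asserts only that $v_j,v_{1/2},u_j$ are \emph{not pairwise linearly dependent} (i.e., not all three mutually parallel) unless $v_j=v_{1/2}=u_j$, and proves it by a De Casteljau collinearity-propagation argument — if $v_j=\pm u_j$ and $v_{1/2}=\pm u_j$, then all intermediate De Casteljau points, and hence all four control points, lie on one line, forcing $v_0,v_1,u$ pairwise dependent and thus $v_0=v_1=u$, the line case. Your reading of~\eqref{eqn:conditions_on_pts} is the literal one, but it is exactly the reading under which the lemma fails, and you correctly sensed that the planar case is where the trouble lives. One smaller flaw: your claim that $v_0=v_{1/2}=u_L$ would force $b|_{[0,1/2]}$ to be a straight line is unjustified (agreement of the two endpoint tangents with the chord direction does not imply straightness) — and also unnecessary, since the all-equal alternative is itself one of the admissible cases of~\eqref{eqn:conditions_on_pts} and needs no refutation.
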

The proof of Lemma~\ref{lemma: reaveraging} is given in Appendix~\ref{proofs}.

Lemma~\ref{lemma: reaveraging} allows us to design a Hermite interpolatory subdivision scheme utilizing the B\'{e}zier average with $\omega=\frac{1}{2}$ as an insertion rule. Namely, for initial Hermite data of the form $\begin{pmatrix}
p_{j}^{0} \\ 
v_{j}^{0} \\ 
\end{pmatrix}$, ${j \in \mathbb{Z}} $, we define for all $k=0,1,2,\ldots$:
\begin{equation} \label{eqn:IHB_scheme}
\begin{pmatrix}
p_{2j}^{k + 1} \\ 
v_{2j}^{k + 1} \\ 
\end{pmatrix} =\begin{pmatrix}
p_{j}^{k} \\ 
v_{j}^{k} \\ 
\end{pmatrix}    , \quad   \begin{pmatrix}
p_{2j + 1}^{k + 1} \\ 
v_{2j + 1}^{k + 1} \\ 
\end{pmatrix} = B_{\frac{1}{2}}\begin{pmatrix}
    \begin{pmatrix}
        p_{j}^{k} \\ 
        v_{j}^{k} \\ 
    \end{pmatrix},\begin{pmatrix}
        p_{j + 1}^{k} \\ 
        v_{j + 1}^{k} \\ 
    \end{pmatrix}
\end{pmatrix}, \quad  j\in \mathbb{Z}.
\end{equation}
We refer to the subdivision scheme~\eqref{eqn:IHB_scheme} as the interpolatory Hermite-B\'{e}zier scheme, in short \textit{IHB-scheme}. 

We view the IHB scheme as a modification of the piecewise linear subdivision scheme, which is also known as Lane-Riesenfeld of order $1$ subdivision scheme (LR1). This scheme serves as an illustrating example for our general method where we replace the linear average in any LR scheme of order $m\ge 1$, by the B\'{e}zier average with $\omega=\frac{1}{2}$. The modified scheme refines Hermite data and we term it Hermite-B\'{e}zier Lane-Riesenfeld of order $m$ (HB-LRm). The refinement step of this scheme is given in Algorithm~\ref{alg:HB_LRm}. 

For the HB-LRm scheme with $m>1$, we obtain a subdivision scheme which is not interpolatory, namely, the interpolation conditions~\eqref{interpolation} are not fulfilled. However, we expect the HB-LRm schemes to approximate the curve, as seen visually in the numerical examples in Section~\ref{sec:examples}. 

\begin{algorithm}[ht]
\caption{The refinement step of the Hermite-B\'{e}zier Lane-Riesenfeld of order $m$}
\label{alg:HB_LRm}
\begin{algorithmic}[1]
\REQUIRE The Hermite data to be refined $ \{ \left( p_i, v_i \right) \}_{i \in \mathbb{Z}}$. The order $m$.
\ENSURE The refined data.
\FOR{$i \in \mathbb{Z}$}
\STATE  $Q_{2i}^{0}  \gets \begin{pmatrix}
        p_{i} \\ 
        v_{i} \\ 
    \end{pmatrix} $    
\STATE  $Q_{2i+1}^{0}  \gets \begin{pmatrix}
        p_{i} \\ 
        v_{i} \\ 
    \end{pmatrix} $  
\ENDFOR
\FOR{$j=1$ \TO $m$}    
\FOR{$i \in \mathbb{Z}$}
\STATE $ Q_{2i}^{j} \gets Q_{i}^{j-1} $ 
\STATE $Q_{2i+1}^{j} \gets B_\frac{1}{2}\left(Q_{i}^{j-1},Q_{i+1}^{j-1}\right) $ 
\ENDFOR
\ENDFOR \\
\RETURN $\{Q_{i}^{m} \}_{i \in \mathbb{Z}}$
\end{algorithmic}
\end{algorithm}

As a first demonstration, we show  the repeated refinements of two elements in $\mathbb{R}^{n}\times S^{n-1}$ by the IHB-scheme, as appear on Figure~\ref{fig:average inserting} and Figure~\ref{fig:3D average inserting} for $n=2$ and $n=3$, respectively. These figures visually reveal some properties, in particular convergence of the IHB scheme, which we present and show in the sequel. 

\begin{figure}
    \centering
    \includegraphics[width=0.19\textwidth]{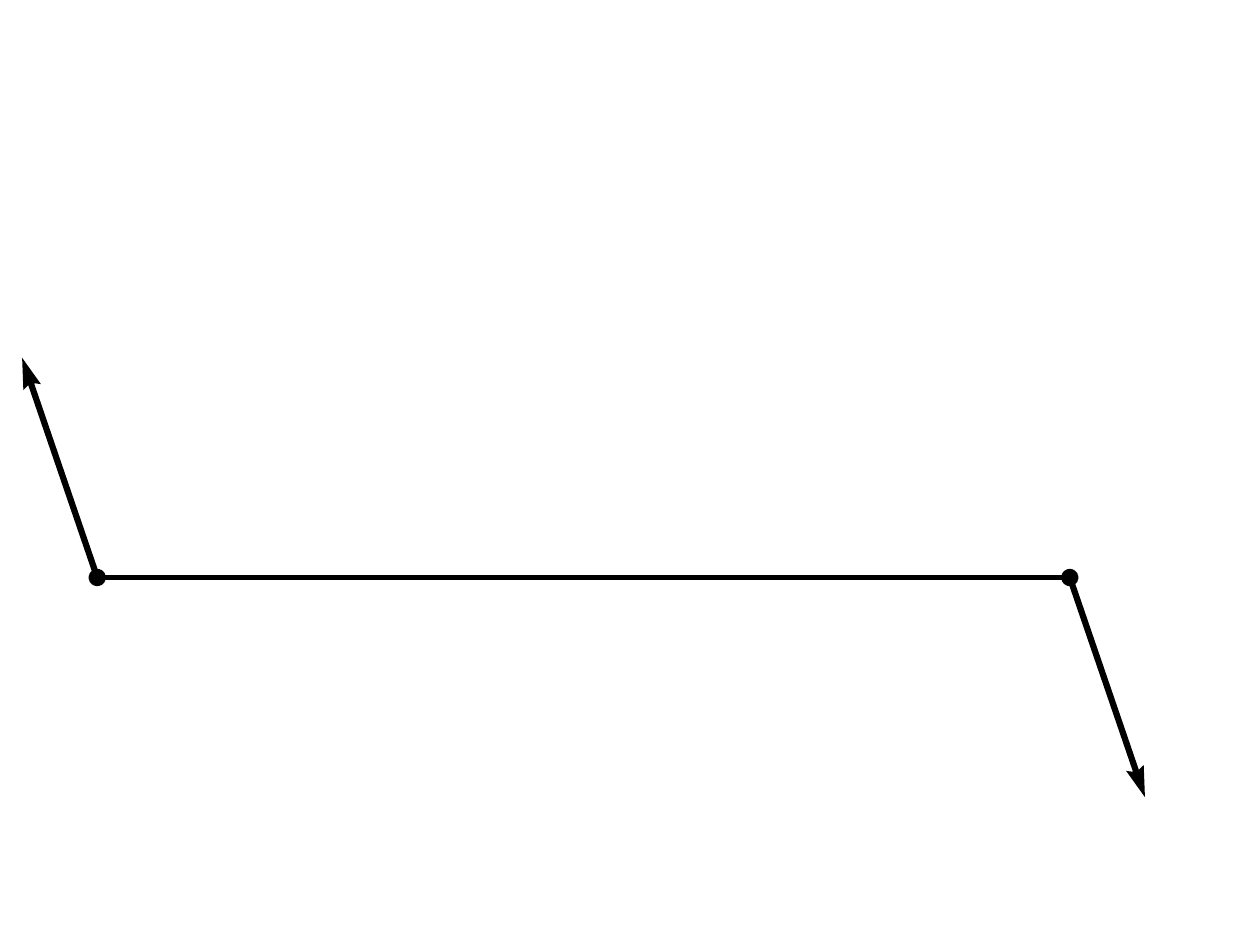}\hfill
    \includegraphics[width=0.19\textwidth]{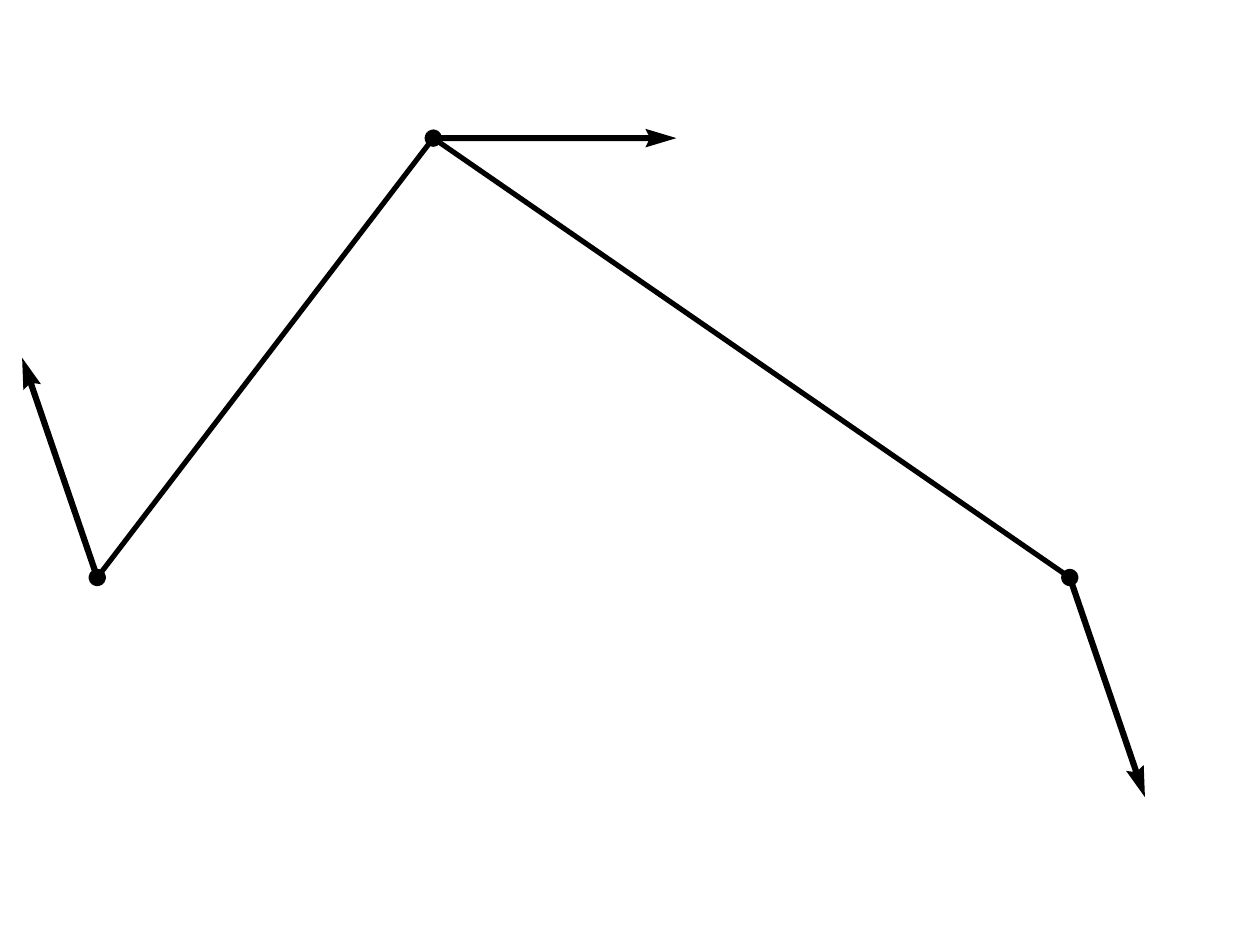}\hfill
    \includegraphics[width=0.19\textwidth]{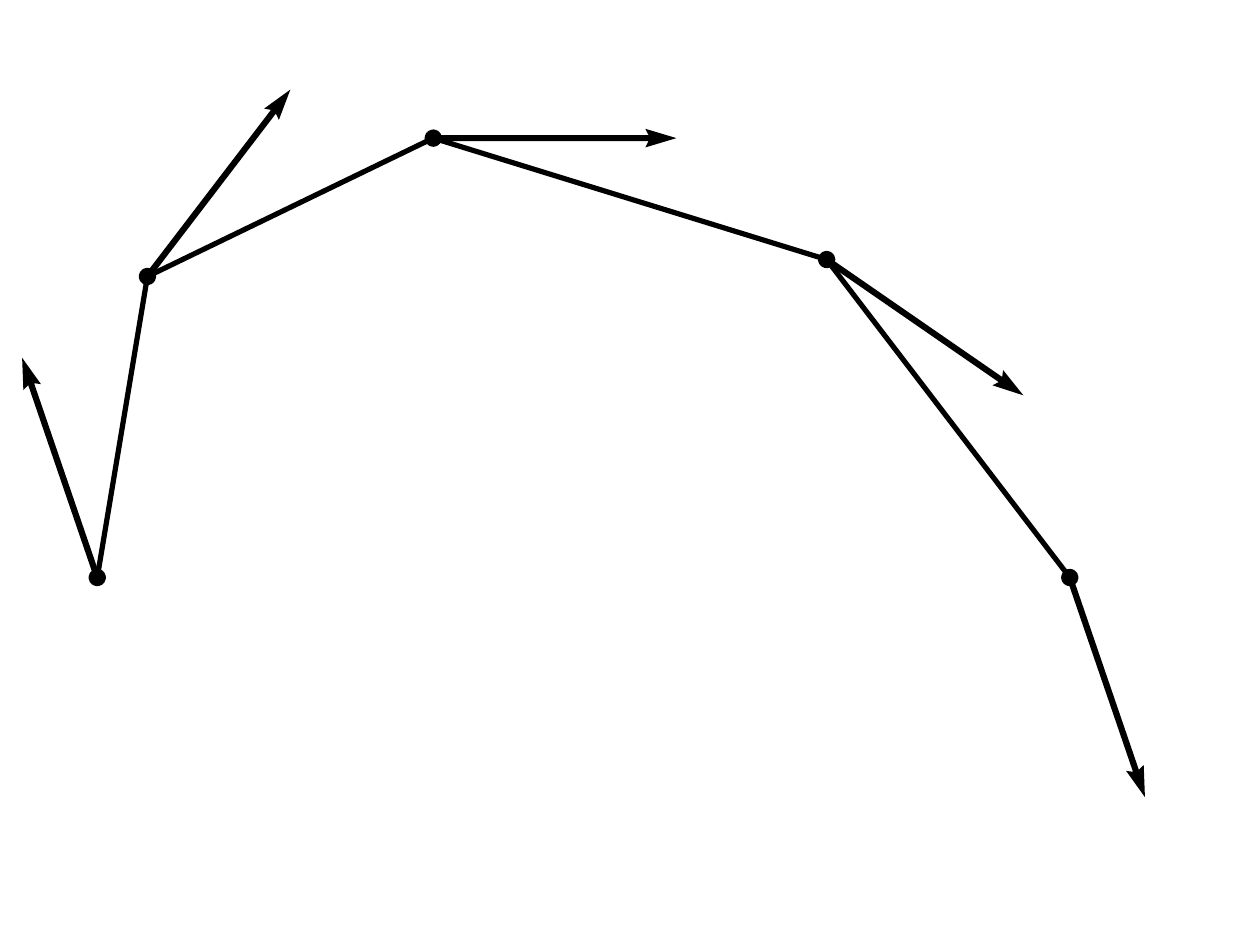}\hfill
    \includegraphics[width=0.19\textwidth]{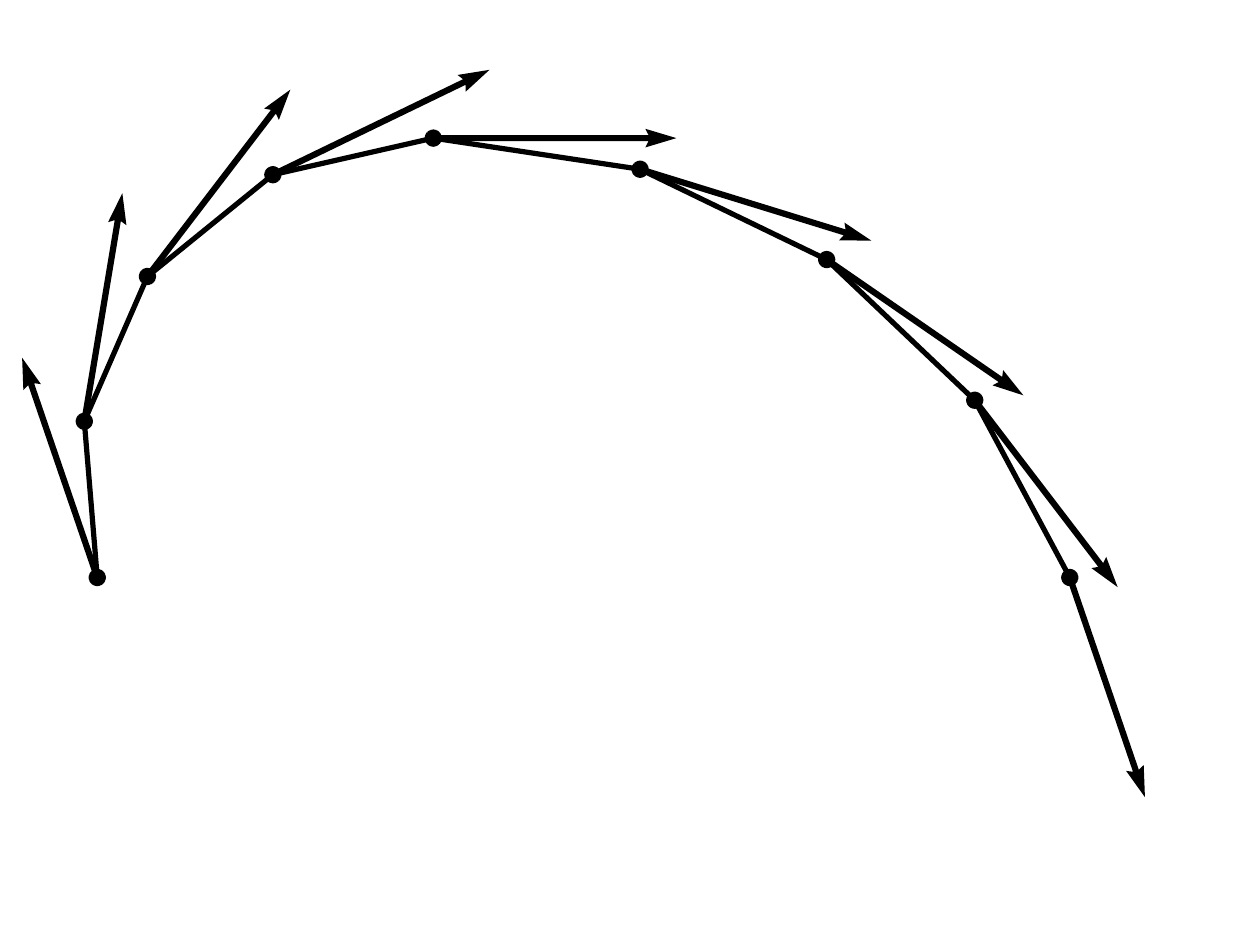}\hfill
    \includegraphics[width=0.19\textwidth]{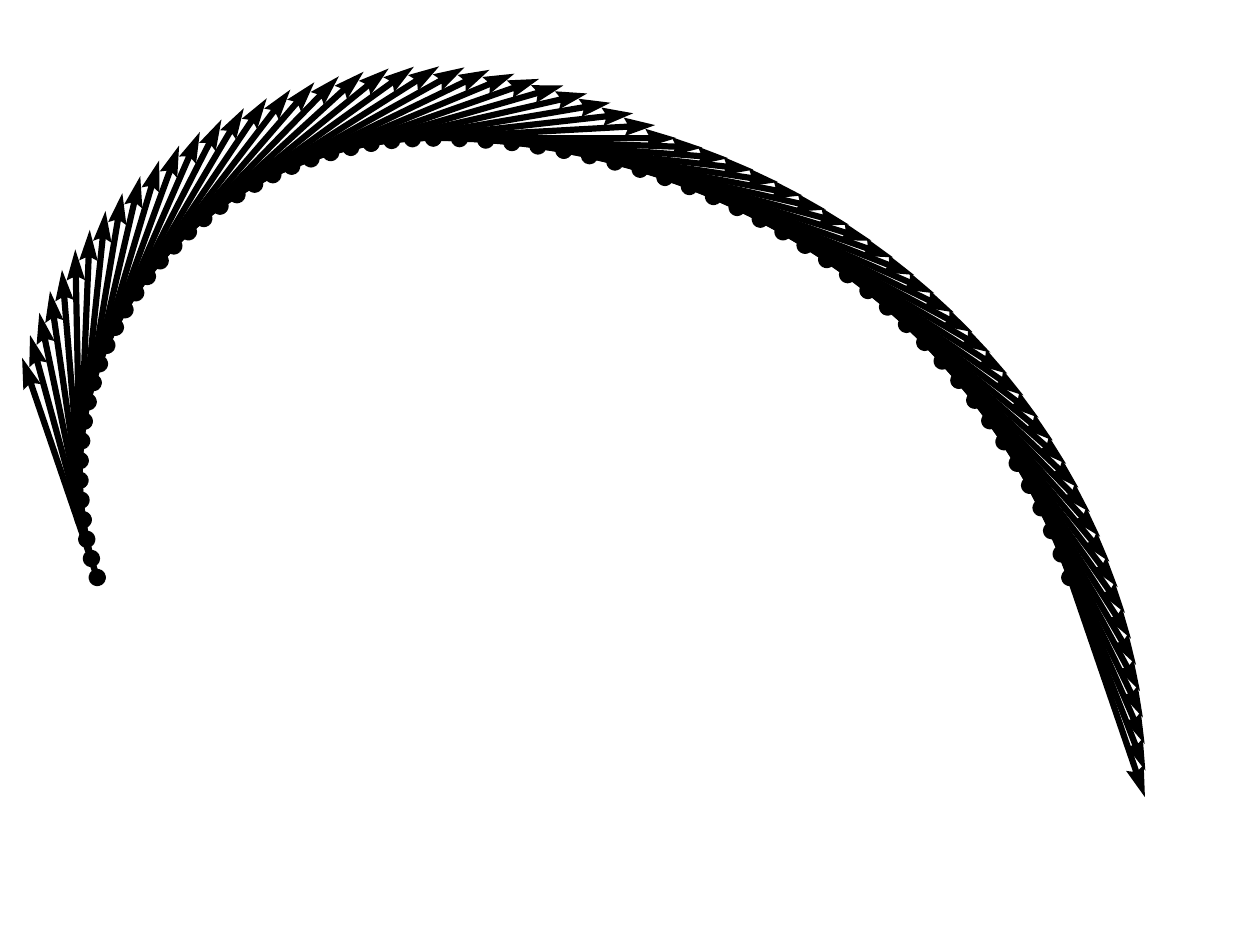}
    \includegraphics[width=0.19\textwidth]{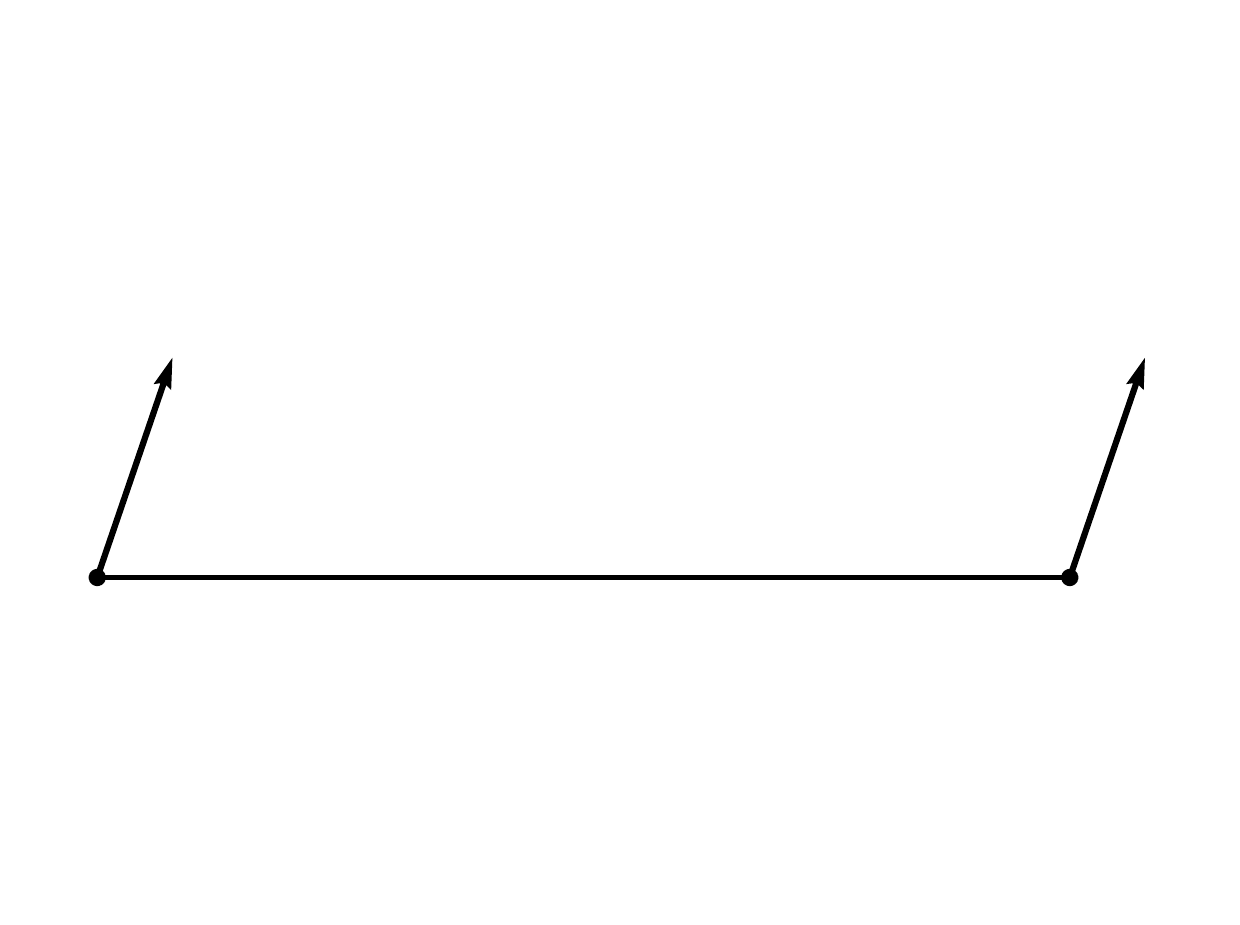}\hfill
    \includegraphics[width=0.19\textwidth]{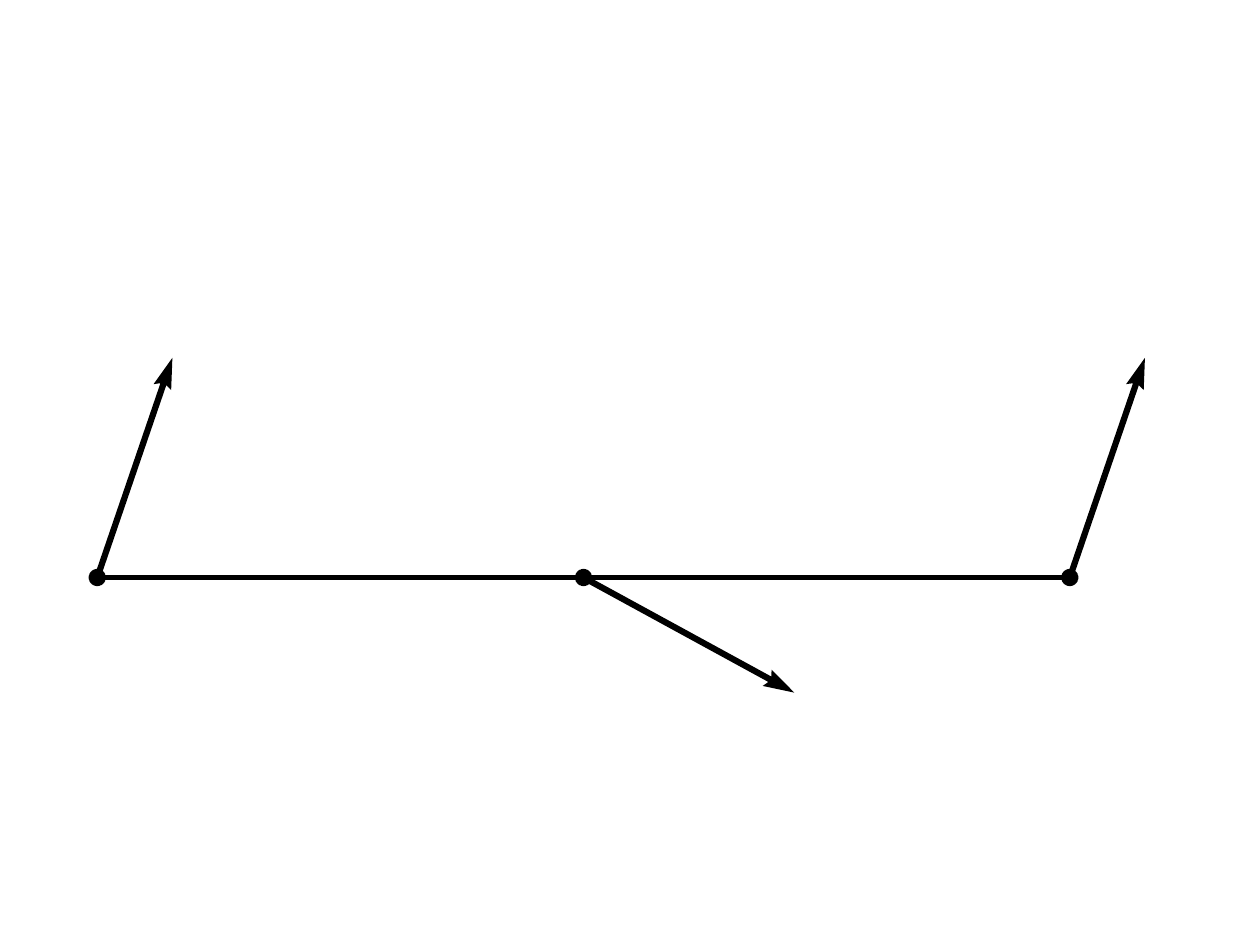}\hfill
    \includegraphics[width=0.19\textwidth]{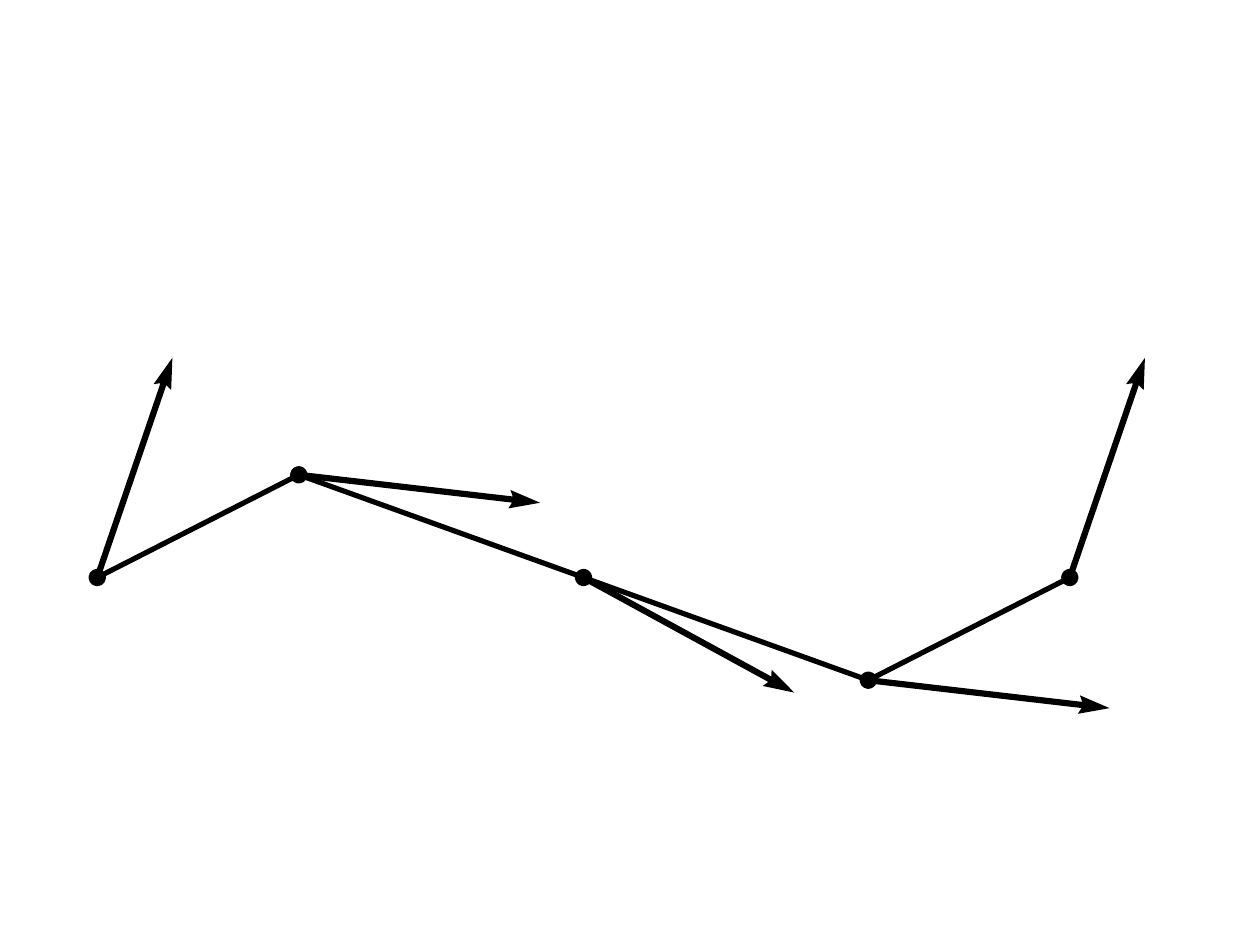}\hfill
    \includegraphics[width=0.19\textwidth]{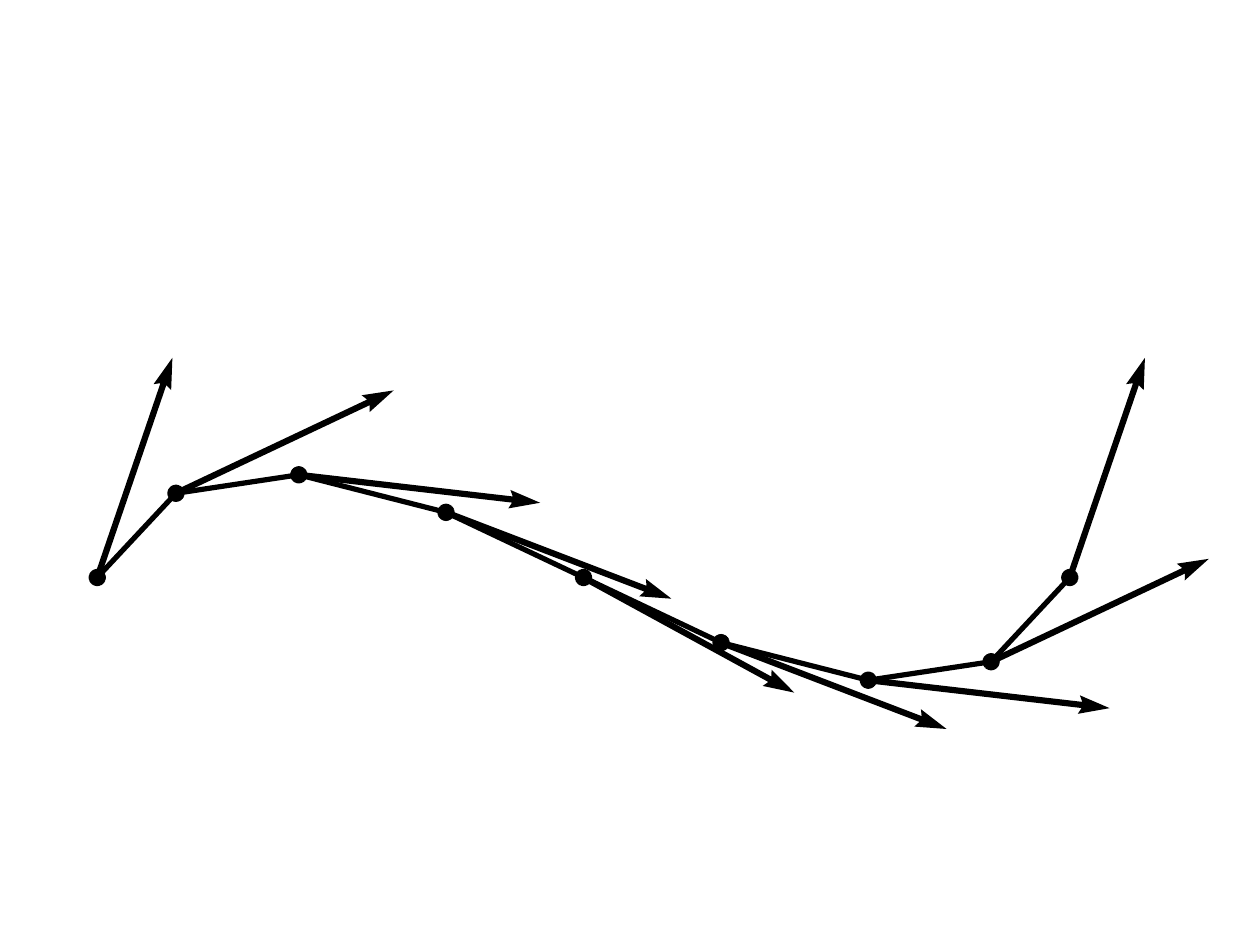}\hfill
    \includegraphics[width=0.19\textwidth]{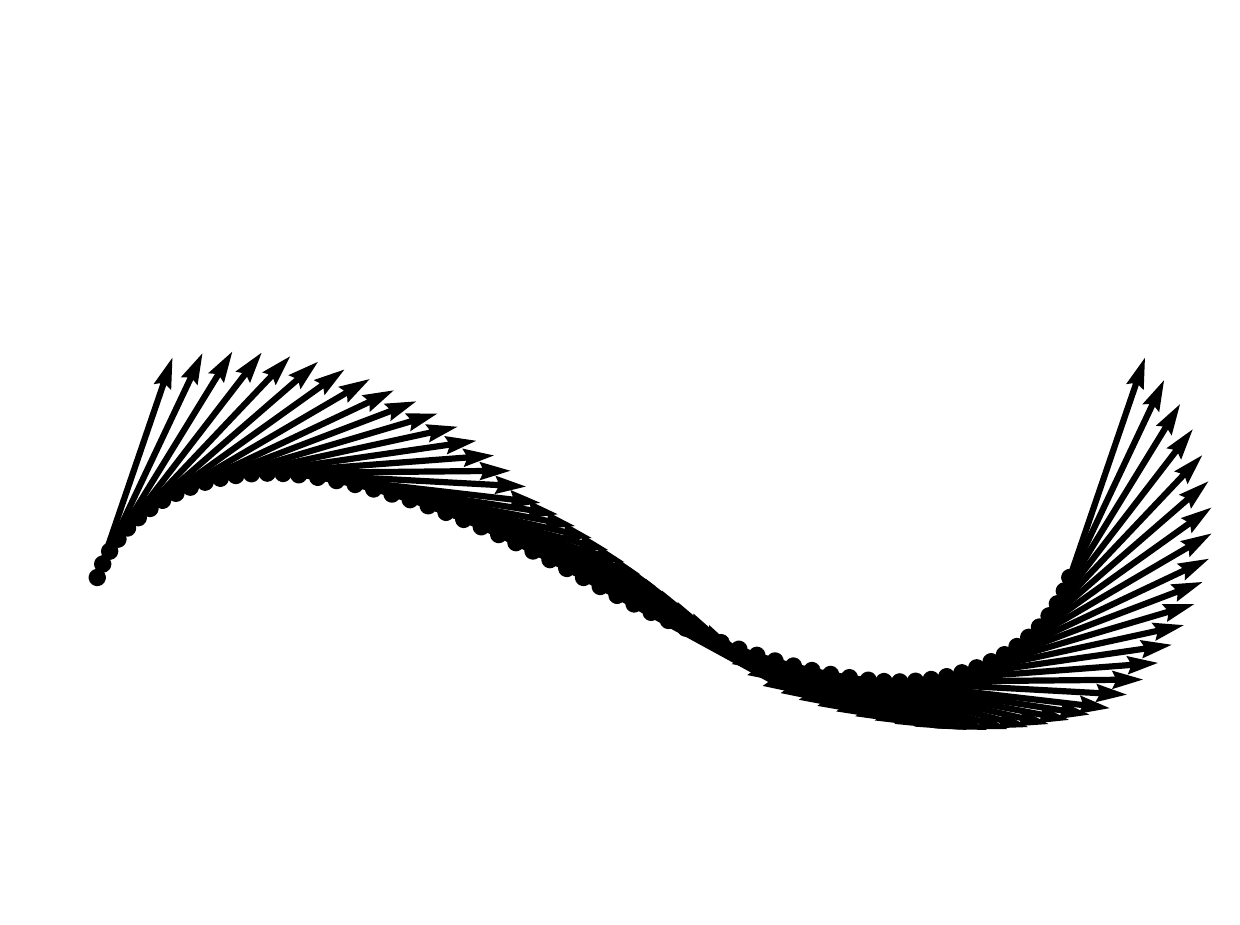}
    \caption{Two examples of applying the IHB-scheme in $\mathbb{R}^{2}$. From left to right: initial data, one iteration, two iterations, three iterations, six iterations.}
    \label{fig:average inserting}
\end{figure}

\begin{figure}
    \centering
    \begin{subfigure}[t]{0.28\textwidth}
        \includegraphics[width=\textwidth]{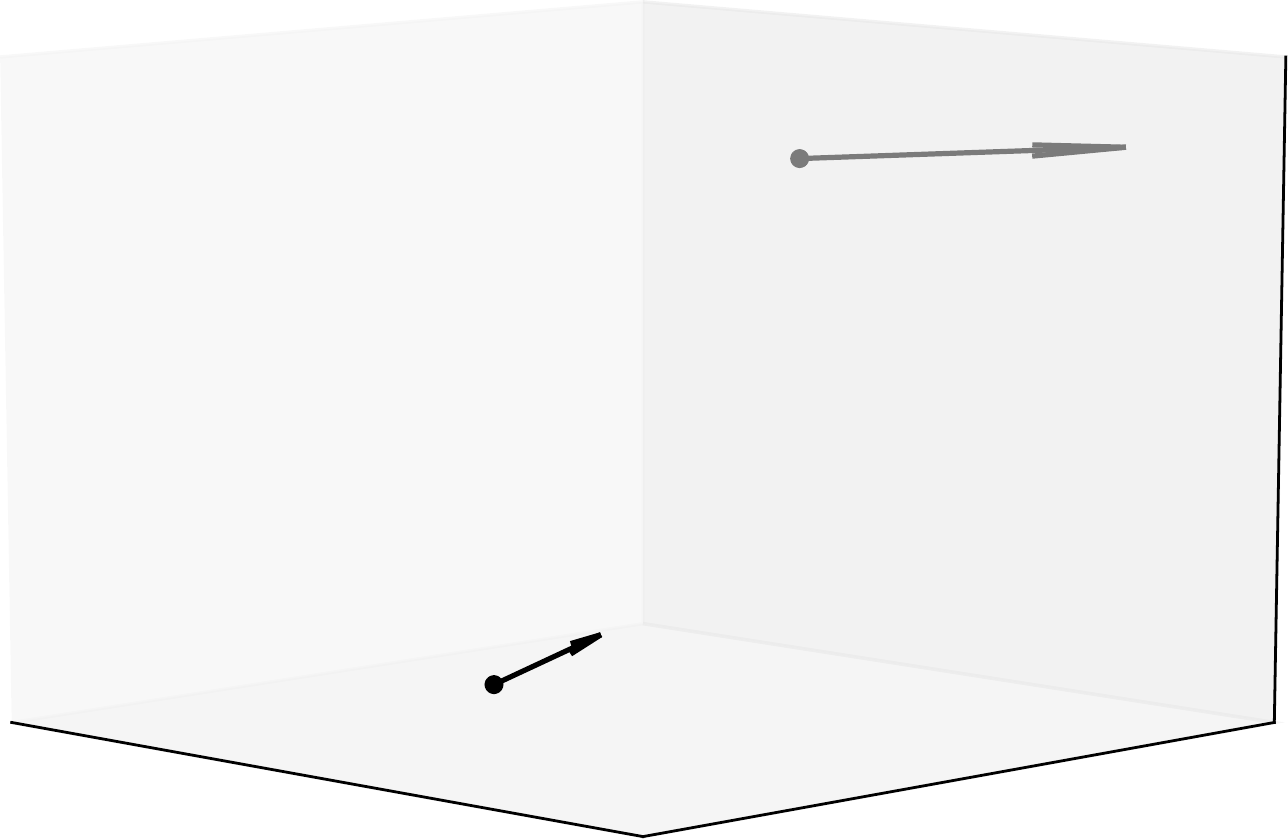}
        \caption{initial data}
        \end{subfigure}
        \quad\quad
    \begin{subfigure}[t]{0.28\textwidth}
        \includegraphics[width=\textwidth]{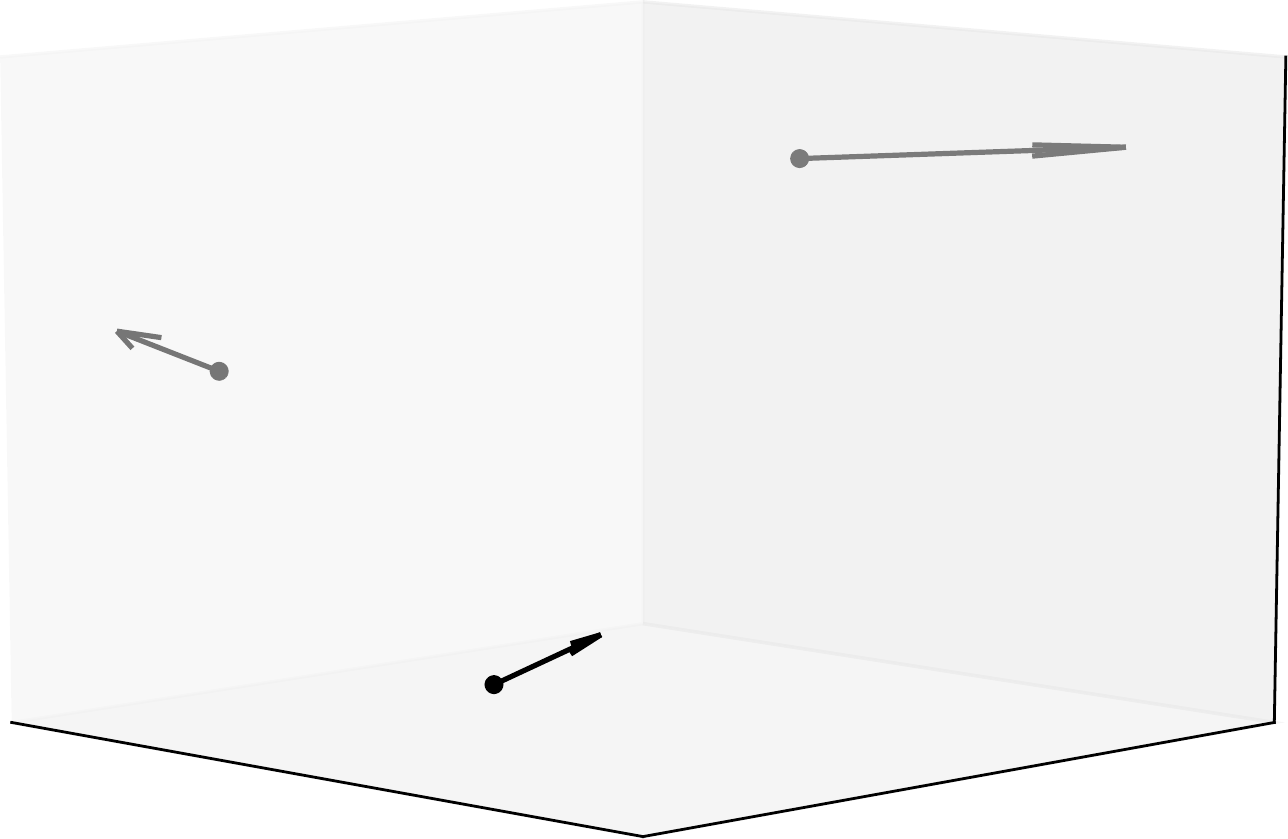}
        \caption{1 iteration}
        \end{subfigure}
        \quad\quad
    \begin{subfigure}[t]{0.28\textwidth}
        \includegraphics[width=\textwidth]{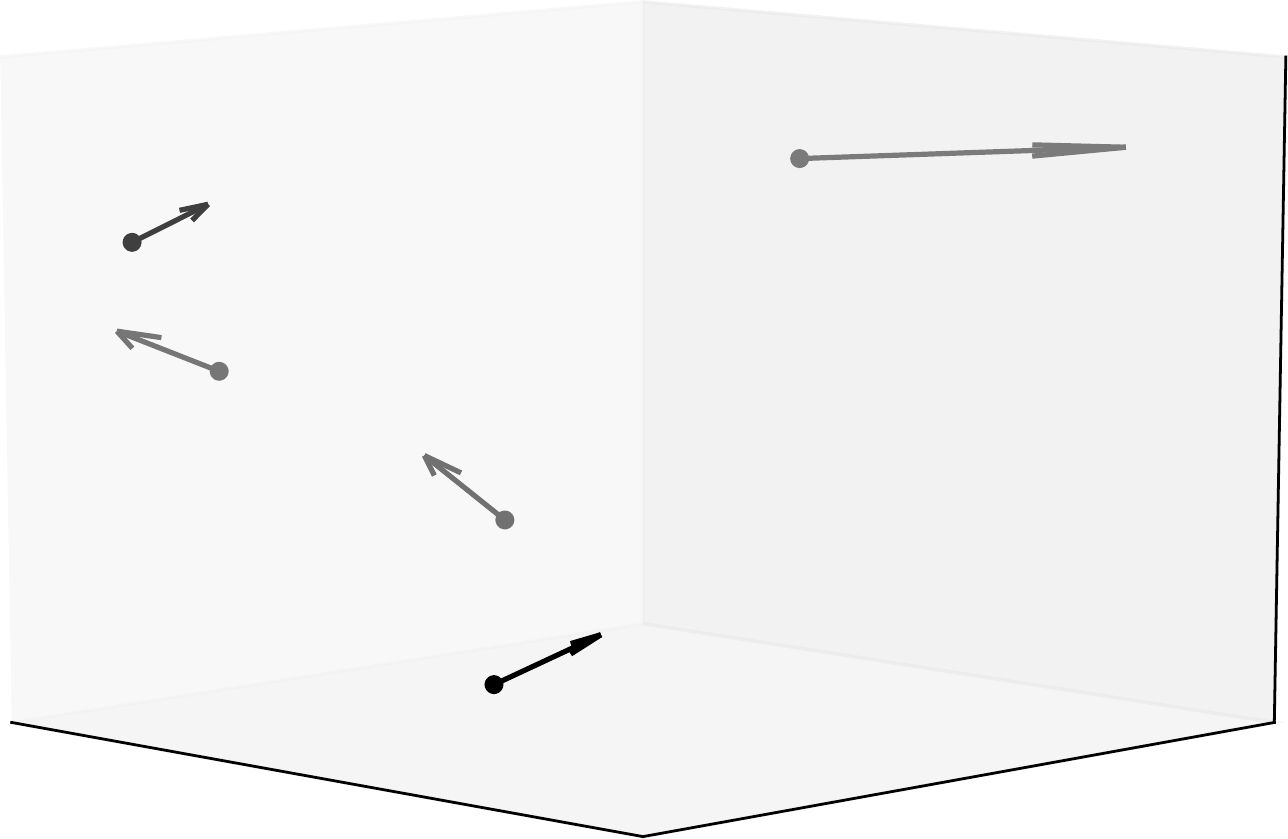}
        \caption{2 iterations}
        \end{subfigure}
    \begin{subfigure}[t]{0.28\textwidth}
        \includegraphics[width=\textwidth]{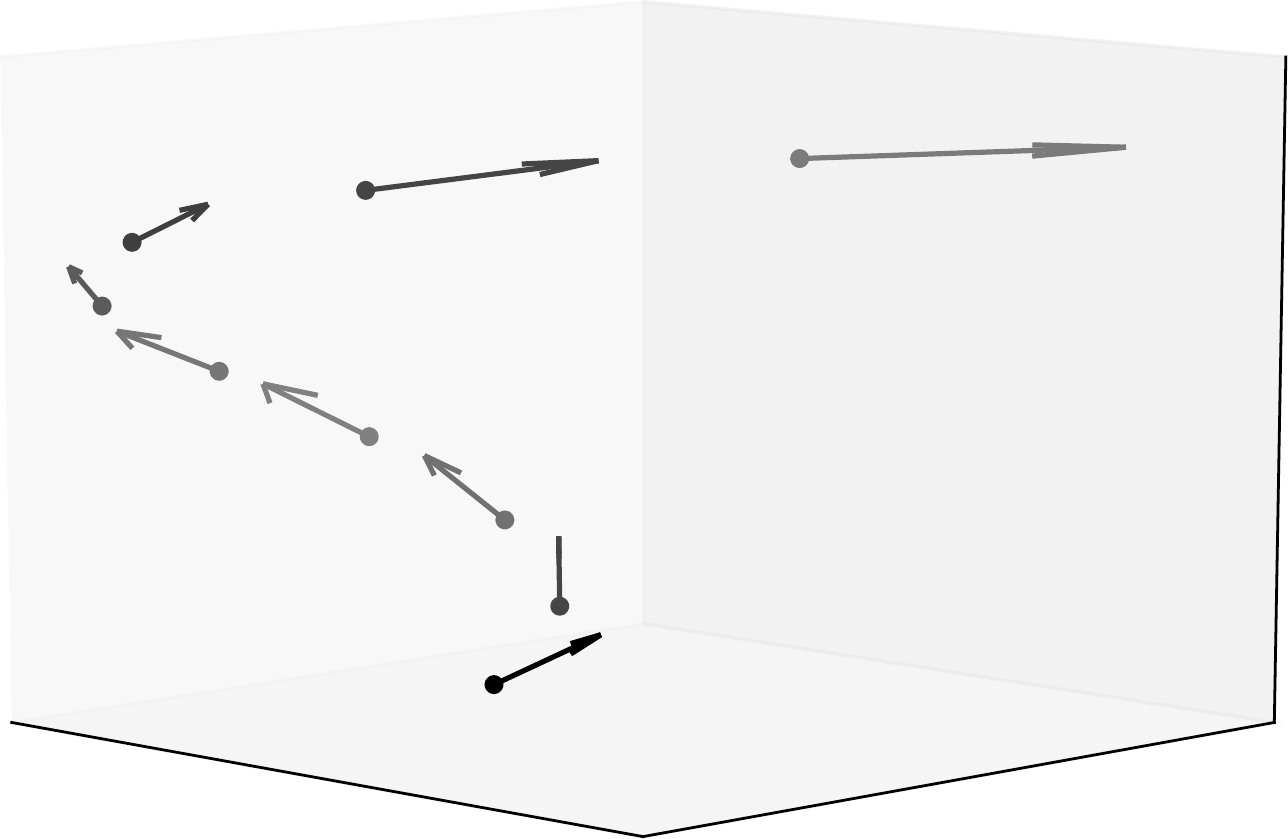}
        \caption{3 iterations}
        \end{subfigure}\quad\quad
    \begin{subfigure}[t]{0.28\textwidth}
        \includegraphics[width=\textwidth]{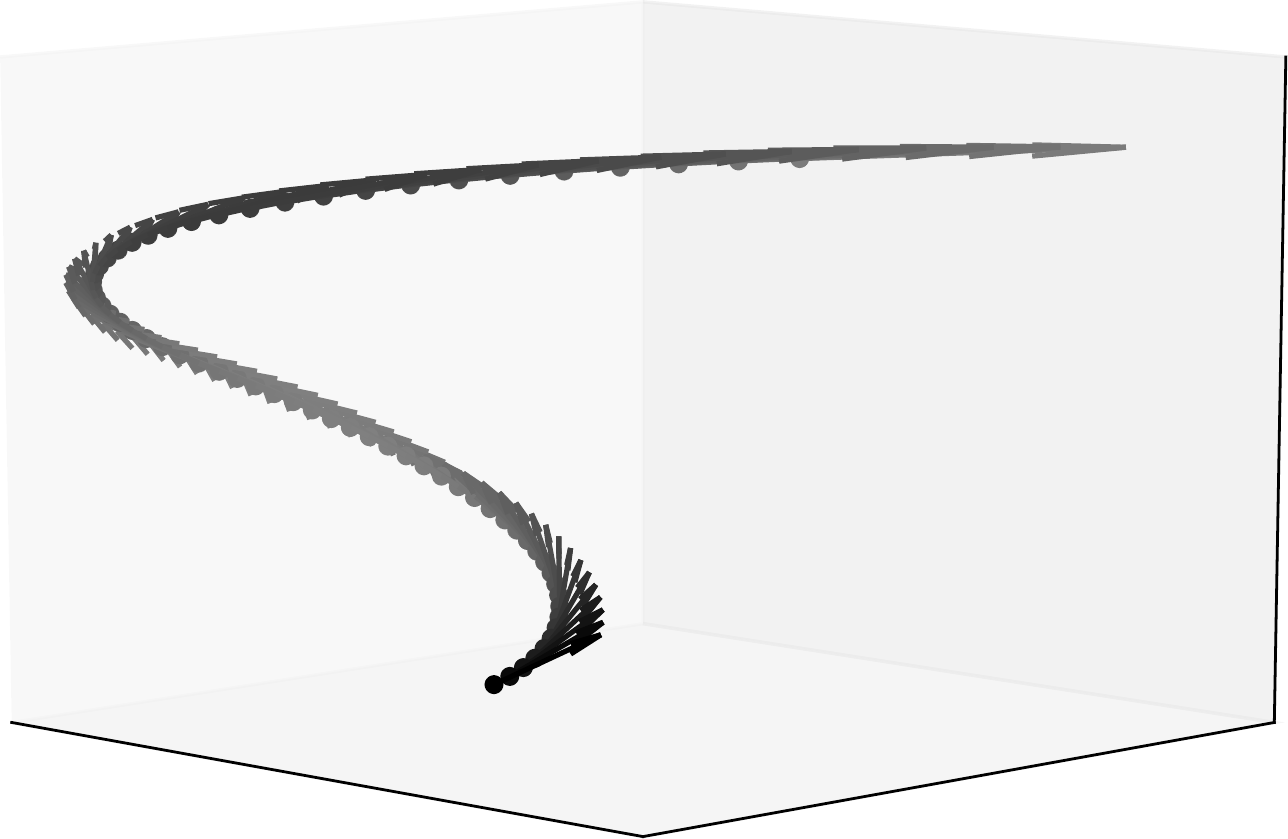}
        \caption{6 iterations}
    \end{subfigure}
    \caption{Example of the performance of the IHB-scheme in $\mathbb{R}^{3}$.}
    \label{fig:3D average inserting}
\end{figure}

Next, we present geometric properties of the HB-LRm schemes with $m \ge 1$ which are a direct consequence of geomeric properties of the B\'{e}zier average, as given in Lemma~\ref{lemma: similarity invariance}, Lemma~\ref{line reconstruction lemma}, and Lemma~\ref{circle reconstruction lemma}. In particular, the first part of the following result shows that preservation of geometric objects by the average becomes reconstruction of these objects by the modified subdivision schemes. 
\begin{thm}
The HB-LRm schemes of order $m \ge 1$ reconstruct lines and circles. For converging HB-LRm schemes, their limits are invariant under isometries and scaling transformations.
\end{thm}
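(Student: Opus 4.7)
The plan is to derive all three properties from the corresponding pointwise properties of the B\'{e}zier average (Lemmas~\ref{lemma: similarity invariance}, \ref{line reconstruction lemma}, and~\ref{circle reconstruction lemma}) by induction on the refinement level, exploiting the fact that the HB-LR$m$ scheme is built solely by iterated applications of $B_{1/2}$ (together with the doubling step, which is trivially compatible with all the claimed properties).

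For \textbf{line reconstruction}, I would start from initial data sampled from a line $L_u$, so every $p_i^0\in L_u$ and $v_i^0=u$. By Lemma~\ref{line reconstruction lemma}, applying $B_{1/2}$ to any two such pairs yields another point on $L_u$ with the same ntangent $u$, and in fact the average reduces to the ordinary linear midpoint of the positions. An induction over the inner loop of Algorithm~\ref{alg:HB_LRm} and then over refinement levels shows that the HB-LR$m$ scheme acts on line data exactly as the classical scalar Lane--Riesenfeld scheme of order $m$ on the positions, while the ntangent component stays fixed at $u$. Since the classical LR$m$ scheme reproduces linear functions, the limit is the line itself with tangent $u$.

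For \textbf{circle reconstruction}, I would again argue by induction on the level $k$. If $\{(p_i^0,v_i^0)\}$ are Hermite samples of a circle $C$, Lemma~\ref{circle reconstruction lemma} guarantees that every midpoint insertion $B_{1/2}(Q_i^{j-1},Q_{i+1}^{j-1})$ in Algorithm~\ref{alg:HB_LRm} produces a point on $C$ together with the true ntangent to $C$ at that point. Hence all $Q_i^j$ at every internal step, and therefore all refined pairs $\{(p_i^k,v_i^k)\}$, remain Hermite samples of $C$. Assuming convergence, the limit curve lies in the closure of these iterates and must coincide with the arc of $C$ they become dense in, so the limit is the circle arc and its ntangent field agrees with the ntangent field of $C$.

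For \textbf{invariance under similarities}, let $\tilde\Phi$ be the lift to $\mathbb{R}^n\times S^{n-1}$ of an isometry $\Phi$, and let $\sigma_a$ denote the Hermite-lifted scaling by $a>0$, as in Lemma~\ref{lemma: similarity invariance}. That lemma states precisely that $B_{1/2}$ commutes with $\tilde\Phi$ and with $\sigma_a$. Since both the doubling step and the inner averaging loop of Algorithm~\ref{alg:HB_LRm} are compositions of $B_{1/2}$ and the identity on each pair, a straightforward induction shows that the whole refinement operator $\mathcal{R}$ satisfies $\mathcal{R}\circ\tilde\Phi=\tilde\Phi\circ\mathcal{R}$ and $\mathcal{R}\circ\sigma_a=\sigma_a\circ\mathcal{R}$. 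Hence the refined data from transformed initial data agree with the transformation applied to the refined data from the original initial data, at every level. Taking $k\to\infty$ and using continuity of $\tilde\Phi$ and $\sigma_a$ yields the desired invariance of the limit curve.

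The only delicate point I anticipate is the circle case: because HB-LR$m$ with $m>1$ is not interpolatory, one has to be explicit that ``all refined points lie on $C$'' together with convergence to a $G^1$ curve really forces the limit to be the arc of $C$ rather than merely a curve whose trace is contained in $C$. This follows since a $G^1$ curve contained in a circle must be a (re)parametrization of an arc, and the ntangent data converging to the circle's tangent field fixes both the arc and its orientation.
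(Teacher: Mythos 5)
Your proposal is correct and follows exactly the route the paper intends: the paper states this theorem without a separate proof, asserting it as a direct consequence of Lemma~\ref{lemma: similarity invariance}, Lemma~\ref{line reconstruction lemma}, and Lemma~\ref{circle reconstruction lemma}, propagated through the refinement steps of Algorithm~\ref{alg:HB_LRm} by induction --- which is precisely your argument, spelled out. Your closing remark on the circle case (that a $G^1$ limit whose trace lies in the circle, with ntangents converging to the circle's tangent field, must be the arc itself) is a careful detail the paper leaves implicit, and it strengthens rather than departs from the paper's reasoning.
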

From here and until the end of this section, we focus our attention on the IHB-scheme (HB-LR1). Next, we present auxiliary results on the B\'{e}zier average and then present the convergence analysis of the IHB-scheme.

\subsection{Contractivity of the B\'{e}zier Average}

Let $\left(\begin{pmatrix}
p_0\\v_0\\
\end{pmatrix},\begin{pmatrix}
p_1\\v_1\\
\end{pmatrix}\right)\in\left(\mathbb{R}^n\times S^{n-1}\right)^2$ be given. As in the previous section, we always assume that $p_0\neq p_1$ and we denote by $p_\frac{1}{2}$ and $v_\frac{1}{2}$ the point and vector obtained by the B\'{e}zier average with weight $\frac{1}{2}$ applied to the given data. The following two lemmas play a central role in the convergence analysis of the IHB-scheme.

\begin{lemma} 
\label{points distance contraction lemma}
If $ \theta_{0} + \theta_{1}\leq \pi$ then $d\left(p_{\frac{1}{2}},p_{j}\right)\leq d\left(p_{0},p_{1}\right), \quad j = 0,1$.

Furthermore, if $ \theta_{0} + \theta_{1}<\pi$ then there exists $\mu \in (0,1)$  such that $ d\left(p_{\frac{1}{2}},p_{j}\right)\leq \mu d\left(p_{0},p_{1}\right)$.
\end{lemma}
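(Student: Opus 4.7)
The plan is to expand $\|p_{1/2} - p_j\|^2$ using the explicit formula for $p_{1/2}$ from Lemma~\ref{non vanishing vectors}, then reduce the resulting bound to a one-variable trigonometric inequality using sum-to-product identities and the spherical triangle inequality.

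First, writing $d := \|p_1 - p_0\|$ and $c := \cos^2\!\left(\frac{\theta_0+\theta_1}{4}\right)$, Lemma~\ref{non vanishing vectors} gives $p_{1/2} - p_0 = \tfrac{1}{2}(p_1 - p_0) + \tfrac{3\alpha}{8}(v_0 - v_1)$ and the analogous identity for $p_{1/2} - p_1$. Expanding the squared norm, using $\langle p_1 - p_0, v_j\rangle = d\cos\theta_j$, $\|v_0 - v_1\|^2 = 2(1-\cos\theta)$, and $\alpha = d/(3c)$, I would show that $\|p_{1/2} - p_j\|^2 \le d^2$ is equivalent to
$$\pm 4c\,(\cos\theta_0 - \cos\theta_1) + (1 - \cos\theta) \le 24 c^2,$$
with the sign depending on $j$. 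Hence it suffices to establish the symmetric bound $4c\,|\cos\theta_0 - \cos\theta_1| + (1-\cos\theta) \le 24 c^2$.

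Next, I would introduce $s := (\theta_0+\theta_1)/2 \in [0,\pi/2]$ under the hypothesis $\theta_0+\theta_1 \le \pi$. Two standard inputs handle the left-hand side: the sum-to-product identity yields
$|\cos\theta_0 - \cos\theta_1| = 2\sin s\,\bigl|\sin\tfrac{\theta_0-\theta_1}{2}\bigr| \le 2\sin s$, while the triangle inequality for the angular metric on $S^{n-1}$ gives $\theta \le \theta_0+\theta_1 = 2s$, hence $1 - \cos\theta \le 2\sin^2 s$. Substituting together with $c = (1+\cos s)/2$, the desired inequality reduces to
$$4(1+\cos s)\sin s + 2\sin^2 s \le 6(1+\cos s)^2.$$
Dividing by $1+\cos s > 0$ and using $\sin^2 s = (1-\cos s)(1+\cos s)$, this simplifies further to the elementary one-variable inequality $\sin s \le 1 + 2\cos s$ on $[0,\pi/2]$.

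Finally, setting $f(s) := 1 + 2\cos s - \sin s$, a quick derivative check shows $f'(s) = -2\sin s - \cos s \le 0$ on $[0,\pi/2]$, and since $f(\pi/2) = 0$, we conclude $f \ge 0$ with equality only at $s = \pi/2$. This proves the non-strict bound when $\theta_0+\theta_1 \le \pi$, and yields strict inequality when $\theta_0+\theta_1 < \pi$; in the latter case one simply takes $\mu := \max_{j=0,1} \|p_{1/2}-p_j\|/d < 1$. The main obstacle I anticipate is recognizing that the spherical triangle inequality $\theta \le \theta_0+\theta_1$ is precisely what collapses the three-angle constraint into a clean one-variable estimate; once this is noticed, the rest is bookkeeping and elementary trigonometry.
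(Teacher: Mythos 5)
Your proof is correct, but it takes a genuinely different and considerably heavier route than the paper's. The paper's proof is a three-line estimate: it applies the triangle inequality directly to $p_{\frac{1}{2}}-p_0=\frac{1}{2}\left(p_1-p_0\right)+\frac{3}{8}\alpha\left(v_0-v_1\right)$, bounds $\left\Vert v_0-v_1\right\Vert\le\left\Vert v_0\right\Vert+\left\Vert v_1\right\Vert=2$, and obtains $d\left(p_{\frac{1}{2}},p_j\right)\le\left(\frac{1}{2}+\frac{1}{4\cos^{2}\left(\frac{\theta_0+\theta_1}{4}\right)}\right)d\left(p_0,p_1\right)$; since $\cos^{2}\left(\frac{\theta_0+\theta_1}{4}\right)\ge\frac{1}{2}$ precisely when $\theta_0+\theta_1\le\pi$, the factor is at most $1$, and for the strict part it simply sets $\mu=\frac{1}{2}+\frac{1}{4\cos^{2}\left(\frac{\gamma}{4}\right)}$ for any $\gamma$ with $\theta_0+\theta_1\le\gamma<\pi$. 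You instead expand $\left\Vert p_{\frac{1}{2}}-p_j\right\Vert^{2}$ exactly, which brings in the cross term $\pm 4c\left(\cos\theta_0-\cos\theta_1\right)$ and forces you to invoke the spherical triangle inequality $\theta\le\theta_0+\theta_1$ together with sum-to-product identities --- machinery the paper never needs for this lemma (the constraint $\left|\theta_1-\theta_0\right|\le\theta\le\theta_0+\theta_1$ enters the paper only later, in the domain $\Omega$ of Appendix A). I verified your algebra: the equivalence with $\pm 4c\left(\cos\theta_0-\cos\theta_1\right)+\left(1-\cos\theta\right)\le 24c^{2}$, the reduction to $\sin s\le 1+2\cos s$ on $\left[0,\frac{\pi}{2}\right]$, and the strictness for $s<\frac{\pi}{2}$ all check out. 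What your extra work buys is a sharper constant: your intermediate bound is exactly $\left(\frac{1}{2}+\frac{\sin s}{4c}\right)^{2}$, i.e.\ $d\left(p_{\frac{1}{2}},p_j\right)\le\frac{1}{2}\left(1+\tan\frac{\theta_0+\theta_1}{4}\right)d\left(p_0,p_1\right)$, which at $\theta_0+\theta_1=\frac{2\pi}{3}$ gives roughly $0.79$ versus the paper's $\frac{5}{6}$. One caveat: your $\mu:=\max_{j=0,1}\left\Vert p_{\frac{1}{2}}-p_j\right\Vert/d$ is pair-specific, whereas the convergence theorem later uses this lemma with a $\mu$ depending only on a uniform bound $\gamma\ge\theta_0+\theta_1$ (it cites the lemma's proof with $\gamma=\frac{2\pi}{3}$ to get the contractivity factor $\frac{5}{6}$); since your bounding expression is increasing in $s$, taking $\mu=\frac{1}{2}\left(1+\tan\frac{\gamma}{4}\right)$ recovers the uniform version, but you should state this explicitly rather than leave it implicit.
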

The proof of Lemma~\ref{points distance contraction lemma} is given in Appendix~\ref{proofs}.

\begin{lemma}
\label{angles contraction lemma}
If $ \sigma\begin{pmatrix}
\begin{pmatrix}
p_{0} \\ 
v_{0} \\ 
\end{pmatrix},\begin{pmatrix}
p_{1} \\ 
v_{1} \\ 
\end{pmatrix}
\end{pmatrix}\leq\frac{3\pi }{4}$. Then,  
\[ \max \Bigg\{ \sigma\begin{pmatrix}
\begin{pmatrix}
p_{0} \\ 
v_{0} \\ 
\end{pmatrix},\begin{pmatrix}
p_{\frac{1}{2}} \\ 
v_{\frac{1}{2}} \\ 
\end{pmatrix}
\end{pmatrix},\sigma\begin{pmatrix}
\begin{pmatrix}
p_{\frac{1}{2}} \\ 
v_{\frac{1}{2}} \\ 
\end{pmatrix},\begin{pmatrix}
p_{1} \\ 
v_{1} \\ 
\end{pmatrix}
\end{pmatrix} \Bigg\} \leq \sqrt{0.9}\sigma\begin{pmatrix}
\begin{pmatrix}
p_{0} \\ 
v_{0} \\ 
\end{pmatrix},\begin{pmatrix}
p_{1} \\ 
v_{1} \\ 
\end{pmatrix}
\end{pmatrix}.\]
\end{lemma}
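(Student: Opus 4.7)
My plan is to reduce the lemma to a statement about an explicit function of only three real parameters and then prove that statement by a combination of asymptotic analysis near the degenerate point and a computer-aided estimate on the rest of the compact parameter region, in line with what the authors announce for the auxiliary result underlying the convergence proof.

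First, I would use Lemma~\ref{lemma: similarity invariance} to normalize the configuration. Since $\sigma$ and the B\'{e}zier average are invariant under isometries and scaling, I can translate and scale so that $p_0=0$ and $p_1=e_1$, hence $u=e_1$ and $\alpha=\bigl(3\cos^{2}\bigl((\theta_0+\theta_1)/4\bigr)\bigr)^{-1}$. A further rotation about $e_1$ places $v_0$ in the $(e_1,e_2)$-plane, so $v_0=(\cos\theta_0,\sin\theta_0,0,\ldots,0)$, and then $v_1$ can be written as $(\cos\theta_1,\sin\theta_1\cos\phi,\sin\theta_1\sin\phi,0,\ldots,0)$ for a single out-of-plane angle $\phi$. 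The symmetry clause of Definition~\ref{Hermite average} (orientation reversal) lets me restrict $\phi\in[0,\pi]$, and the relabeling of the two input pairs lets me impose $\theta_0\le\theta_1$. After this reduction, the B\'{e}zier curve, $p_{1/2}$, and $v_{1/2}$ all lie in a 3D subspace, and the problem is fully described by $(\theta_0,\theta_1,\phi)$ in the compact set
\begin{equation*}
D=\bigl\{(\theta_0,\theta_1,\phi):0\le\theta_0\le\theta_1,\ \theta_0^{2}+\theta_1^{2}\le \tfrac{9\pi^{2}}{16},\ \phi\in[0,\pi]\bigr\}.
\end{equation*}

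Next, using the closed forms \eqref{mid-point}--\eqref{mid-vector} from Lemma~\ref{non vanishing vectors}, I would write down the new chord directions
\begin{equation*}
u^{L}=\frac{p_{1/2}-p_0}{\norm{p_{1/2}-p_0}},\qquad u^{R}=\frac{p_1-p_{1/2}}{\norm{p_1-p_{1/2}}},
\end{equation*}
and form the new angles $\theta_j^{L}=\arccos\langle v_j^{L},u^{L}\rangle$ and $\theta_j^{R}=\arccos\langle v_j^{R},u^{R}\rangle$ (with $v_0^{L}=v_0$, $v_1^{L}=v_{1/2}$, $v_0^{R}=v_{1/2}$, $v_1^{R}=v_1$). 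This yields, on $D$, an explicit continuous function
\begin{equation*}
F(\theta_0,\theta_1,\phi)=\frac{\max\{\sigma^{L},\sigma^{R}\}}{\sigma},
\end{equation*}
and the lemma is equivalent to the uniform bound $F\le\sqrt{0.9}$ on $D\setminus\{\theta_0=\theta_1=0\}$.

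The main obstacle is what happens at the corner $\theta_0=\theta_1=0$, where both numerator and denominator vanish and $F$ is only defined by a limit. Here I would perform a Taylor expansion of $b(t)$ in $(\theta_0,\theta_1,\phi)$ around the straight-line configuration: to leading order the control points become $p_0,\,p_0+\tfrac{1}{3}v_0,\,p_1-\tfrac{1}{3}v_1,\,p_1$, and a direct expansion shows that $\theta^{L}_j$ and $\theta^{R}_j$ are linear in $(\theta_0,\theta_1)$ with an explicit $4\times2$ matrix $M(\phi)$, so $\max\{\sigma^{L},\sigma^{R}\}/\sigma$ converges to the largest singular-value-like quantity of a pair of $2\times2$ blocks of $M$. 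I would prove analytically that this limiting ratio is strictly below $\sqrt{0.9}$ for every $\phi\in[0,\pi]$, which closes off a neighbourhood of the corner.

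On the remainder of $D$, where $\sigma$ is bounded below by a positive constant, the denominator is safe and $F$ is smooth. The plan is to obtain explicit Lipschitz (or gradient) bounds for $F$ on $D$ and then to verify $F\le\sqrt{0.9}$ by an exhaustive evaluation on a sufficiently fine grid, with the Lipschitz estimate certifying the inequality over each cell; this is precisely the computer-aided component advertised in the introduction and deferred to Appendix~A. I would expect the tightness of the constant $\sqrt{0.9}$ (near $0.949$) to be the delicate issue, requiring a fine grid near configurations where $\theta_0+\theta_1$ is close to $\pi/2$ and $\phi$ is small or $\pi$, where planar tests suggest the ratio is nearly attained.
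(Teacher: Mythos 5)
Your plan coincides in essence with the paper's own treatment in Appendix~A: the authors likewise reduce the lemma to the positivity of an explicit trigonometric expression in three scalar parameters (they use $(\theta_0,\theta_1,\theta)$ with the spherical triangle inequality $\lvert\theta_1-\theta_0\rvert\le\theta\le\theta_0+\theta_1$ in place of your out-of-plane angle $\phi$, and the difference $D=0.9(\theta_0^2+\theta_1^2)-\tilde\theta_{0,0}^2-\tilde\theta_{0,1}^2$ in place of your ratio $F$, exploiting orientation-reversal symmetry to treat only one of $\sigma^L,\sigma^R$), then handle the degenerate corner at the origin analytically (via a gradient-nonvanishing argument on $B_{0.1}(0)$, playing the role of your linearization with the matrix $M(\phi)$) and certify the rest of the compact domain by an exhaustive grid search with an explicit gradient bound. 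Your proposal is a sound variant of the same strategy, with the same status as the paper's argument: a computer-assisted validation whose delicate points are exactly the corner neighbourhood and the size of the Lipschitz constant governing the grid density.
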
 
 We do not provide a formal  proof of Lemma~\ref{angles contraction lemma}. Instead, we give in Appendix~\ref{app:sigma proof} a comprehensive discussion where we show that proving the lemma is equivalent to showing the positivity of an explicit trigonometric function. Then, we introduce numerical indications for the positivity of this function and an outline for a proof based on a further exhaustive computer search.

\begin{remark}
The above two lemmas shed light on our exploration of a natural metric; see, e.g., Remark~\ref{metric}. On the one hand, Lemma~\ref{points distance contraction lemma} guarantees the boundedness of the Euclidean metric over a subset of $\left(\mathbb{R}^{n}\times S^{n - 1}\right)^2$. On the other hand, even though $ \sigma :X \to [0,\sqrt{2}\pi]$ is not a metric over $\mathbb{R}^{n}\times S^{n - 1}$, it gives a measure of how far an ordered couple of point-vector pairs is from being sampled from a line. It is the result of Lemma~\ref{angles contraction lemma} that the B\'{e}zier average yields two pairs that are ``closer" to samples from a line than the original pair.

In addition, a metric $\tilde{d}$ that satisfies the so-called metric property,
\[ \tilde{d}\left(\begin{pmatrix}
    p_{j} \\ 
    v_{j} \\ 
    \end{pmatrix},\begin{pmatrix}
p_{\frac{1}{2} } \\ 
v_{\frac{1}{2} } \\ 
\end{pmatrix}\right)= \frac{1}{2} \tilde{d}\left(\begin{pmatrix}
    p_{0} \\ 
    v_{0} \\ 
    \end{pmatrix}, \begin{pmatrix}
    p_{1} \\ 
    v_{1} \\ 
    \end{pmatrix}\right), \quad j=0,1. \]
would lead to the convergence of the HB-LRm subdivision schemes of any order $m \ge 1$ (see Algorithm~\ref{alg:HB_LRm}), which can be shown by the technique in~\cite{dyn2017global}.
\end{remark}

\subsection{G1 Convergence of the IHB-scheme}

First, we recall several definitions and notation needed in the convergence analysis.  Initial control points $ P^{0} = \left\{ p_{j}^{0}\right\}_{j\mathbb{\in Z}}   $ is a sequence of points in $\mathbb{R}^d$. A subdivision scheme refining points $ \mathcal{S}$ refines $P^{0} $ and generate the control points $\mathcal{S}^k(P^0)= P^{k} = \left\{ p_{j}^{k}\right\}_{j\mathbb{\in Z}}$, $k>0$. Associating the point $p^k_j$ with the parameter value  $ t^k_j=2^{-k}j$, we define at each level the piecewise linear interpolant to the points $(t^k_j,p^k_j),\ j\in \mathbb Z$, $f_k$ (also known as the \textit{control polygon at level $k$}).  With these notions we can define the convergence of $\mathcal S.$ We term $\mathcal{S}$ convergent if the sequence $\left\{ f_{k}\right\}_{k\geq 0}$ converges uniformly in the $ L_{\infty }$ norm. In addition, we follow the definition given in~\cite{dyn2012geometric} regarding G1 convergence of a subdivision scheme refining points, and say that $\mathcal{S}$ is G1 convergent if it is convergent and there exists a continuously varying directed tangent along its limit curve. 

A Hermite subdivision scheme is termed $G1$ convergent if it is convergent as a points refining scheme and is convergent as tangents refining scheme such that the limit tangents are tangent to the limit curve, see e.g.,~\cite{dyn2012geometric,lipovetsky2021subdivision, reif2021clothoid}. Note that the normalized tangents are points on the $S^{n-1}$ sphere, and that the definition of convergence can be generalized to manifold-valued data if instead of piecewise linear interpolants we use piecewise geodesic interpolants (see Definition~3.5. in~\cite{dyn2017manifold}).

By Theorem 3.6. in~\cite{dyn2017manifold}, sufficient conditions for a subdivision scheme refining points $\mathcal{S}$ to be convergent are displacement$-$safe and a contractivity factor $ \mu \in (0,1)$. The first requires a bound between two control polygons of consecutive refinement levels, and it trivially holds for interpolatory schemes. The second one means that 
\[ \Delta(\mathcal{S}(P^k)) \le \mu \Delta(P^k), \quad \text{ where } \quad \Delta(P) = \sup_j d(p_{j}^{k}, p_{j+1}^{k})  . \]

Back to the Hermite setting, with the initial data $ P^{0} =\left(\begin{pmatrix}
p_{j}^0 \\ 
v_{j}^0 \\
\end{pmatrix}\right)_{j\mathbb{\in Z}}$ and with $P^{k}=\left(\begin{pmatrix}
p_{j}^{k} \\ 
v_{j}^{k} \\ 
\end{pmatrix}\right)_{j\mathbb{\in Z}}$ the refined Hermite data at the $k$-th refinement level, and its associated
\begin{equation}
\sigma^{(k)} = \sup_{j}\sigma\left(\begin{pmatrix}
p_{j}^{k} \\ 
v_{j}^{k} \\ 
\end{pmatrix},\begin{pmatrix}
p_{j + 1}^{k} \\ 
v_{j + 1}^{k} \\ 
\end{pmatrix}\right) .
\end{equation}
In addition, we define the piecewise geodesic interpolant of the tangent vectors at the $k$-th refinement
level $\left\{\left(v_{j}^{k}\right)\right\}_{j\mathbb{\in Z}}$ as
\begin{equation}
PG_{k}\left(t\right) = M_{t2^{k} - j}\left(v_{j}^{k},v_{j + 1}^{k}\right), \quad   t\in\left[2^{ - k}j,2^{ - k}\left(j + 1\right)\right) , 
\end{equation}
where $ M_{\omega }$ is the sphere geodesic average, namely $ M_{\omega }(u,v) $ is the unique point on the great circle connecting two non-antipodal points $u$ and $v$, that divides the geodesic distance between $u$ and $v$ in a ratio of $1-\omega$ when measuring from $u$ and $\omega$ when measuring from $v$. Then, we conclude:
\begin{thm}
The IHB-scheme is $ G^{1}$ convergent whenever the initial data satisfies $ \sigma^{\left(0\right)}\leq\frac{3\pi }{4}$.
\end{thm}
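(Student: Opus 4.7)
The plan is to combine the two contractivity results, Lemma~\ref{angles contraction lemma} and Lemma~\ref{points distance contraction lemma}, with Theorem~3.6 of~\cite{dyn2017manifold}, to obtain convergence of the point and tangent sequences, and then to identify the limit tangent field with the normalized derivative of the limit curve. Throughout, let $\sigma_j^{(k)}$ denote the quantity $\sigma$ applied to the consecutive pair $\bigl((p_j^k,v_j^k),(p_{j+1}^k,v_{j+1}^k)\bigr)$, write $\Delta^{(k)}=\sup_j d(p_j^k,p_{j+1}^k)$, and recall that $\sigma^{(k)}=\sup_j \sigma_j^{(k)}$.

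The first step is to iterate Lemma~\ref{angles contraction lemma}. Since $\sqrt{0.9}<1$ and $\sigma^{(0)}\leq 3\pi/4$, an easy induction shows that the hypothesis of the lemma is preserved at every refinement level and
\[ \sigma^{(k)}\leq (\sqrt{0.9})^{k}\,\sigma^{(0)}, \]
so $\sigma^{(k)}\to 0$ geometrically. Because $\theta_0+\theta_1\leq\sqrt{2}\,\sigma$, there exists $k_0$ with $\theta_{0,j}^{(k)}+\theta_{1,j}^{(k)}<\pi$ for every $j$ once $k\geq k_0$, so Lemma~\ref{points distance contraction lemma} produces a contraction factor $\mu_j^{(k)}\in(0,1)$ for each such pair. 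Inspecting~\eqref{mid-point} together with~\eqref{eqn:alpha}, one has $p_{1/2}-\tfrac{1}{2}(p_0+p_1)=\tfrac{3}{8}\alpha(v_0-v_1)$ where $\|v_0-v_1\|\leq\theta\to 0$ and $\alpha\to d(p_0,p_1)/3$ as $\sigma\to 0$, so $\mu_j^{(k)}\to 1/2$. Consequently there exist $\mu_\ast<1$ and $k_1\geq k_0$ with $\mu_j^{(k)}\leq \mu_\ast$ uniformly in $j$ for all $k\geq k_1$, giving $\Delta^{(k)}\to 0$ geometrically. Since the IHB-scheme is interpolatory it is trivially displacement-safe, so Theorem~3.6 of~\cite{dyn2017manifold} yields uniform convergence of the control polygons $f_k$ to a continuous limit curve $F\colon\mathbb{R}\to\mathbb{R}^{n}$.

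Convergence of the spherical component is immediate from $g(v_j^k,v_{j+1}^k)\leq \sqrt{2}\,\sigma^{(k)}\to 0$: the piecewise geodesic interpolants $PG_{k}$ are Cauchy in $L_\infty$ and converge to a continuous $V\colon\mathbb{R}\to S^{n-1}$. The main obstacle is the remaining $G^{1}$ matching condition $V=F'/\|F'\|$. I expect the right approach is to work with the B\'ezier curves themselves: concatenating the pieces $b_{j}^{k}(t)$ from~\eqref{bezier curve} produces a piecewise $G^{1}$ curve $B_{k}$ that, by construction, interpolates the level-$(k+1)$ nodes with the prescribed tangents, and whose pieces are regular by Remark~\ref{remark: linearly independence} together with the preserved bound $\sigma^{(k)}\leq 3\pi/4$. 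Using that $\alpha/d(p_j^k,p_{j+1}^k)=1/(3\cos^{2}((\theta_0+\theta_1)/4))$ stays bounded and that $\Delta^{(k)}\to 0$ geometrically, a standard B\'ezier estimate gives $\|B_{k}-B_{k+1}\|_{C^{1}}=O(\Delta^{(k)})$, hence $B_{k}\to F$ uniformly in $C^{1}$ and $B_{k}'/\|B_{k}'\|\to F'/\|F'\|$ uniformly. Since the tangent to $B_{k}$ at the node parametrizing $(p_{j}^{k},v_{j}^{k})$ equals exactly $v_{j}^{k}$, passing to the limit along the dense family of dyadic nodes identifies $V$ with $F'/\|F'\|$, completing the $G^{1}$ convergence.
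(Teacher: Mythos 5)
Your first two steps are essentially the paper's own proof. The iterated application of Lemma~\ref{angles contraction lemma} giving $\sigma^{(k)}\leq(\sqrt{0.9})^{k}\sigma^{(0)}$ is exactly how the paper handles the tangent component (the paper phrases it as a Cauchy estimate for $PG_k$, bounding $g(PG_k(t),PG_{k+1}(t))\leq 3\sqrt{2}(\sqrt{0.9})^{k}\sigma^{(0)}$, but the content is the same), and your treatment of the points --- use $\theta_0+\theta_1\leq\sqrt{2}\sigma^{(k)}$ to get $\theta_0+\theta_1$ uniformly below a fixed $\gamma<\pi$ for $k$ large, extract a uniform contraction factor from the second part of Lemma~\ref{points distance contraction lemma}, invoke displacement-safety of interpolatory schemes and Theorem~3.6 of~\cite{dyn2017manifold} --- matches the paper, which takes $\gamma=\frac{2\pi}{3}$ and the explicit factor $\frac{5}{6}$.

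The last step, however, has a genuine gap. You identify $V$ with $F'/\Vert F'\Vert$ via an asserted ``standard B\'{e}zier estimate'' $\Vert B_k-B_{k+1}\Vert_{C^1}=O(\Delta^{(k)})$, but this is unproven and, taken with respect to the dyadic parametrization, very likely false: each piece $b_j^k$ lives on an interval of length $2^{-k}$, so its derivative scales like $2^{k}d(p_j^k,p_{j+1}^k)$, and the only available bound is $\Delta^{(k)}\leq(\frac{5}{6})^{k-k_1}\Delta^{(k_1)}$, which permits $2^k\Delta^{(k)}$ to grow like $(\frac{5}{3})^{k}$; nothing you have established controls the $C^1$ distance, nor even guarantees that the parametrized limit $F$ is differentiable with nonvanishing derivative --- yet ``$B_k'/\Vert B_k'\Vert\to F'/\Vert F'\Vert$ uniformly'' presupposes precisely that. (You would additionally need a uniform lower bound on $\Vert (b_j^k)'\Vert$ relative to the chord length to control the normalization, which is not established either.) This parametrization-dependent $C^1$ machinery is exactly what the $G^1$ framework is designed to avoid, and the paper closes the argument far more cheaply and without it: since $\sigma^{(k)}\to 0$, the angles between $v_j^k$ and the adjacent chord directions $p_{j+1}^k-p_j^k$ and $p_j^k-p_{j-1}^k$ tend to zero uniformly, so the (already convergent) tangent field is tangent to the limit curve in the geometric sense required by the definition of $G^1$ convergence. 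Replacing your $C^1$ argument by this observation repairs the proof.
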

\begin{proof}
Let 
$ P^{0}=\left(\begin{pmatrix}
p_{j}^0 \\ 
v_{j}^0 \\ 
\end{pmatrix}\right)_{j\mathbb{\in Z}}$
be the initial Hermite type data with $ \sigma^{\left(0\right)}\leq\frac{3\pi }{4}$. We begin with the convergence of the ntangents.  To this end, we prove that $\left\{ PG_{k}\left(t\right)\right\}_{k\mathbb{\in N}}$ is a Cauchy series for all $ t$. The proof follows closely the proof of Theorem~3.6. in~\cite{dyn2017manifold}. 

Recall $\theta$ of~\eqref{theta definition} and $\theta_{0}$ and $\theta_{1} $ as defined in~\eqref{theta j definition}. Then, we observe that for any $\begin{pmatrix}
p_{0} \\ 
v_{0} \\ 
\end{pmatrix},\begin{pmatrix}
p_{1} \\ 
v_{1} \\ 
\end{pmatrix}\in\mathbb{R}^{n}\times S^{n - 1}$,
\begin{align*}
\theta = g\left(v_{0},v_{1}\right) &\leq \theta_{0} + \theta_{1} = \sqrt{\theta_{0}^{2} + 2\theta_{0}\theta_{1} + \theta_{1}^{2}} \\ 
& \leq\sqrt{2\left(\theta_{0}^{2} + \theta_{1}^{2}\right)} =\sqrt{2}\sigma\left(\begin{pmatrix}
p_{0} \\ 
v_{0} \\ 
\end{pmatrix},\begin{pmatrix}
p_{1} \\ 
v_{1} \\ 
\end{pmatrix}\right) .
\end{align*}
Let $t\in\left[2^{ - k}j,2^{ - k}\left(j + 1\right)\right)$, $ j\mathbb{\in Z}$. Then, $g\left(PG_{k}\left(t\right),PG_{k + 1}\left(t\right)\right)$ is bounded by

\begin{align*}
 & g\left(PG_{k}\left(t\right),v_{j}^{k}\right) + g\left(v_{j}^{k},PG_{k + 1}\left(t\right)\right) \\
& = g\left(PG_{k}\left(t\right),v_{j}^{k}\right) + g\left(v_{2j}^{k + 1},PG_{k +1}\left(t\right)\right) \\
& \leq g\left(v_{j}^{k},v_{j + 1}^{k}\right) + g\left(v_{2j}^{k + 1},v_{2j + 1}^{k + 1}\right) + g\left(v_{2j+1}^{k + 1},v_{2j + 2}^{k + 1}\right) \\
&\leq\sqrt{2}\cdot \sigma^{\left(k\right)} + 2\sqrt{2}\cdot \sigma^{\left(k + 1\right)}\leq 3\sqrt{2}\cdot \mu^{k}\sigma^{\left(0\right)}
\end{align*}
with $\mu  =\sqrt{0.9}$. The last inequality follows from Lemma~\ref{angles contraction lemma}. 
Thus  $\left\{ PG_{k}\right\}_{k\mathbb{\in N}}$ converges uniformly and the IHB-scheme is convergent as a refining scheme of ntangents.

We now consider the IHB-scheme as a point refining scheme. It is interpolatory and hence  displacement-safe. To derive a contractivity factor
we consider levels $k$ with $k$ large enough, such that   $\sup_{j}\theta_{0}^{\left(j\right)} + \theta_{1}^{\left(j\right)}\leq\frac{2\pi }{3}$, where $\theta_i^{\left(j\right)}=\theta_i\left(\begin{pmatrix}
p_j^{k}\\v_j^{k}\\
\end{pmatrix},\begin{pmatrix}
p_{j+1}^{k}\\v_{j+1}^{k}\\
\end{pmatrix}\right)$ for $i=0,1$. By Lemma~\ref{points distance contraction lemma}, 
\[ d\left(p_{2j}^{k + 1},p_{2j + 1}^{k + 1}\right),d\left(p_{2j + 1}^{k + 1},p_{2j + 2}^{k + 1}\right)\leq\frac{5}{6}d\left(p_{j}^{k},p_{j + 1}^{k}\right) , \]
with $\gamma  =\frac{2\pi }{3}$ in the proof of Lemma~\ref{points distance contraction lemma}. It follows that the IHB-scheme has a contractivity factor of $\frac{5}{6}$  as a point refining scheme, and thus we deduce the convergence of the points.

Finally we observe that since $ \sigma^{\left(k\right)}\xrightarrow{k\longrightarrow \infty }0$ both the angles between $ v_{j}^{k}$ to $ p_{j + 1}^{k} - p_{j}^{k}$ and to $ p_{j}^{k} - p_{j - 1}^{k}$ approaches zero when $ k$ approaches infinity.\ \ This means that the limit tangents are tangent to the limit curve.

We conclude that the IHB-scheme is G1 for data satisfying $ \sigma^{\left(0\right)}\leq\frac{3\pi }{4}$.
\end{proof}

\begin{remark}  \label{remark: different modifications}
\phantom{The following}
\begin{enumerate}[label=(\roman*)]
    \item 
    Different modifications of LR1 can be obtained by using other Hermite averages, for example as done in~\cite{lipovetsky2021subdivision} in the 2D case. 
    \item
    Similar arguments, as in the proofs of this section, lead to an extension of the convergence result in~\cite{lipovetsky2021subdivision} for a wider class of initial data and for any dimension.
\end{enumerate}
\end{remark}

\section{Numerical examples} \label{sec:examples}

This section provides 2D and 3D examples of interpolating or approximating curves based on  geometric Hermite samples. Specifically, we test the performance of the IHB-scheme and the 
HB-LR3 scheme (modification, based on the B\'{e}zier average, of the LR3 algorithm). We then compare these results to approximations by related methods, such as other modifications of LR1, see Remark~\ref{remark: different modifications}.

Note that all the examples of this section are available as Python code for reproducibility and for providing a further point of view, in~\url{https://github.com/HofitVardi/Hermite-Interpolation}.

\subsection{Comparison between different modifications of LR1}

In the first example, we compare the IHB-scheme with the modification of LR1 in~\cite{lipovetsky2021subdivision}; see also Remark~\ref{alternative average} and Remark~\ref{remark: different modifications}. We apply both schemes to data sampled from spirals in $\mathbb{R}^{2}$ and $\mathbb{R}^{3}$. 

Figure~\ref{fig:spiral} shows the limit curves obtained from  samples with increasing density. As demonstrated in the figure, for a high density of samples, the performances of the two schemes are almost identical. However, as the number of samples decreases, the performance of the IHB-scheme is superior. Note that as mentioned in Remark~\ref{alternative average}, in the 2D case and for a sufficiently dense sampling, the two methods coincide. This example indicates similar behavior in 3D.

\begin{figure}[hbt!]
    \centering
    \includegraphics[width=0.26\textwidth]{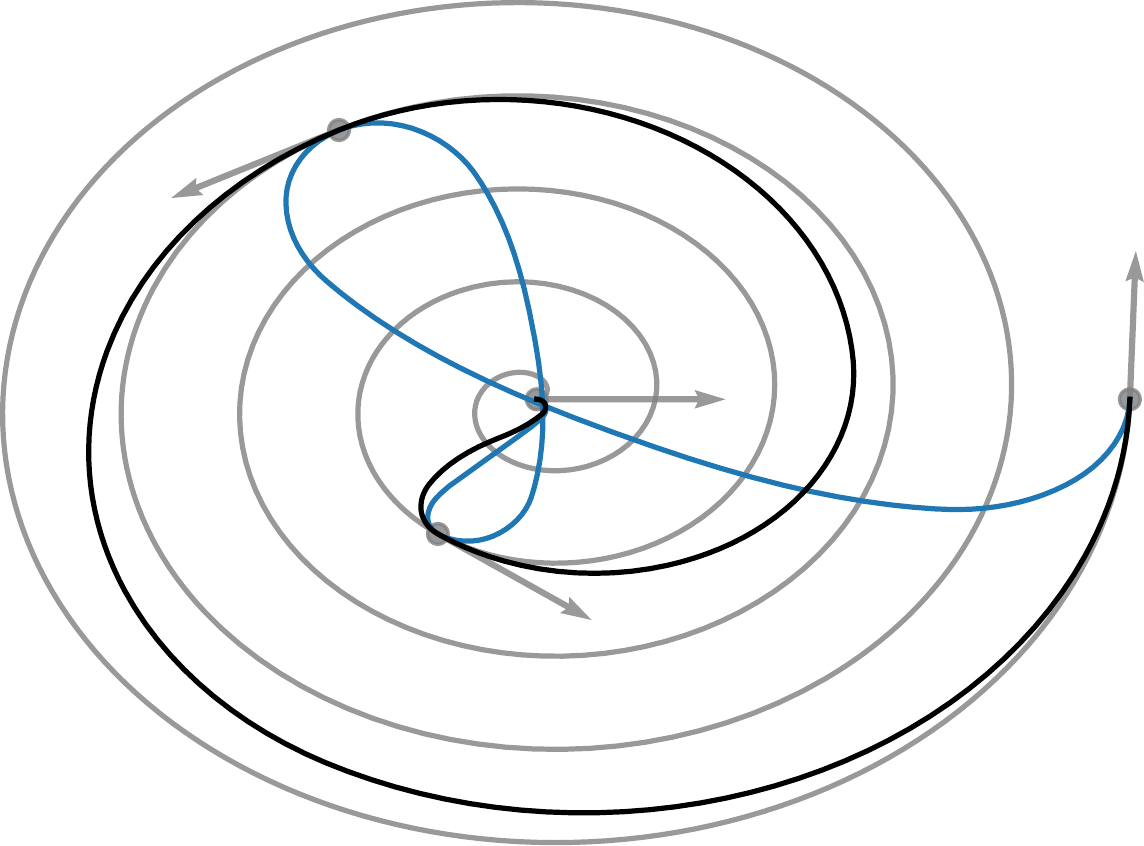}
    \quad\quad
    \includegraphics[width=0.26\textwidth]{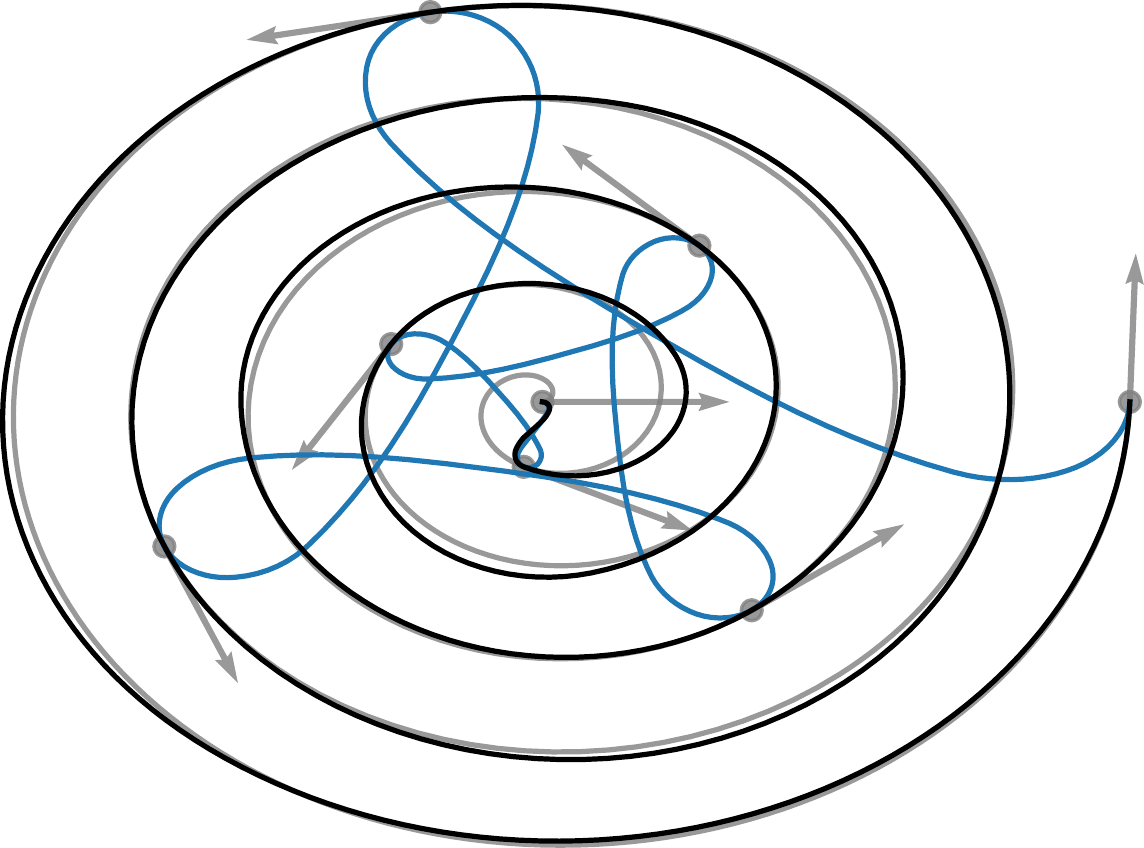}
    \quad\quad
    \includegraphics[width=0.26\textwidth]{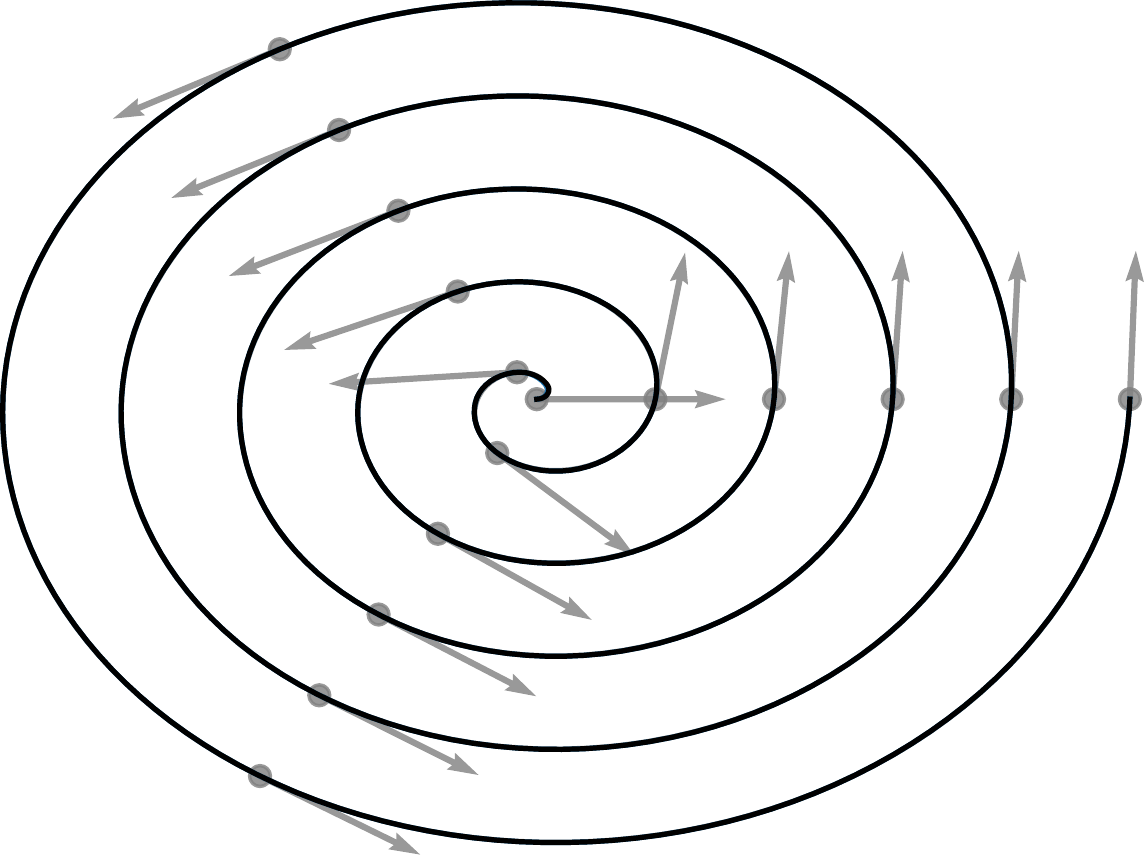}
    \\ \vspace{\baselineskip}
    \includegraphics[width=0.3\textwidth]{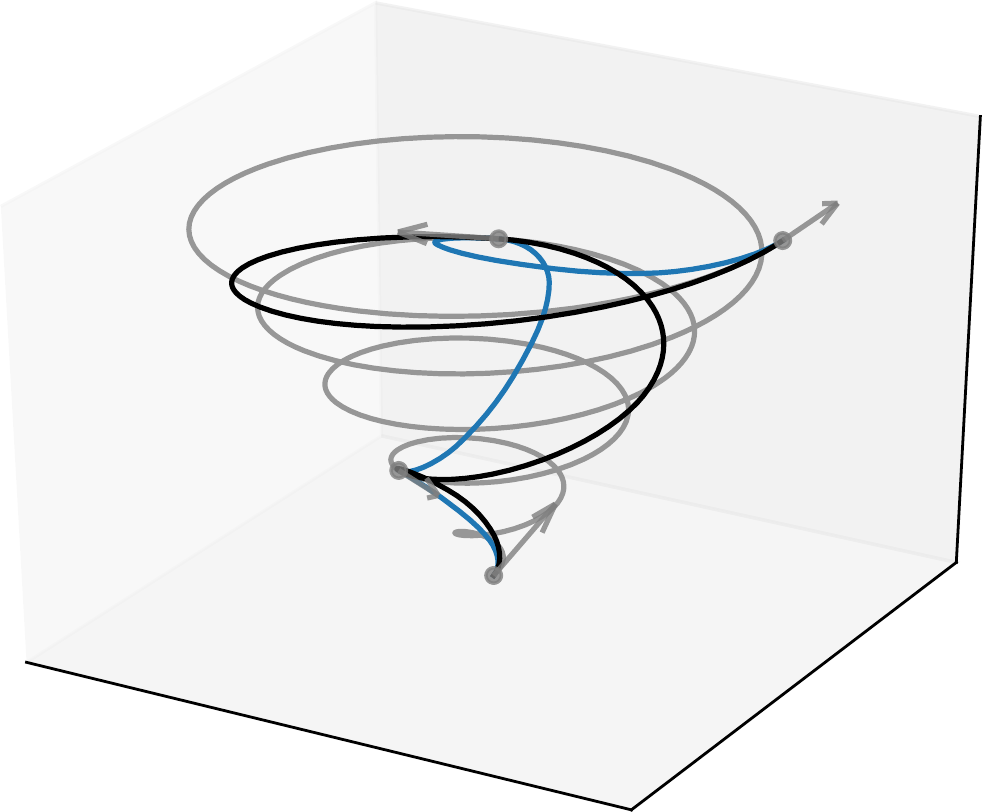}
    \quad
    \includegraphics[width=0.3\textwidth]{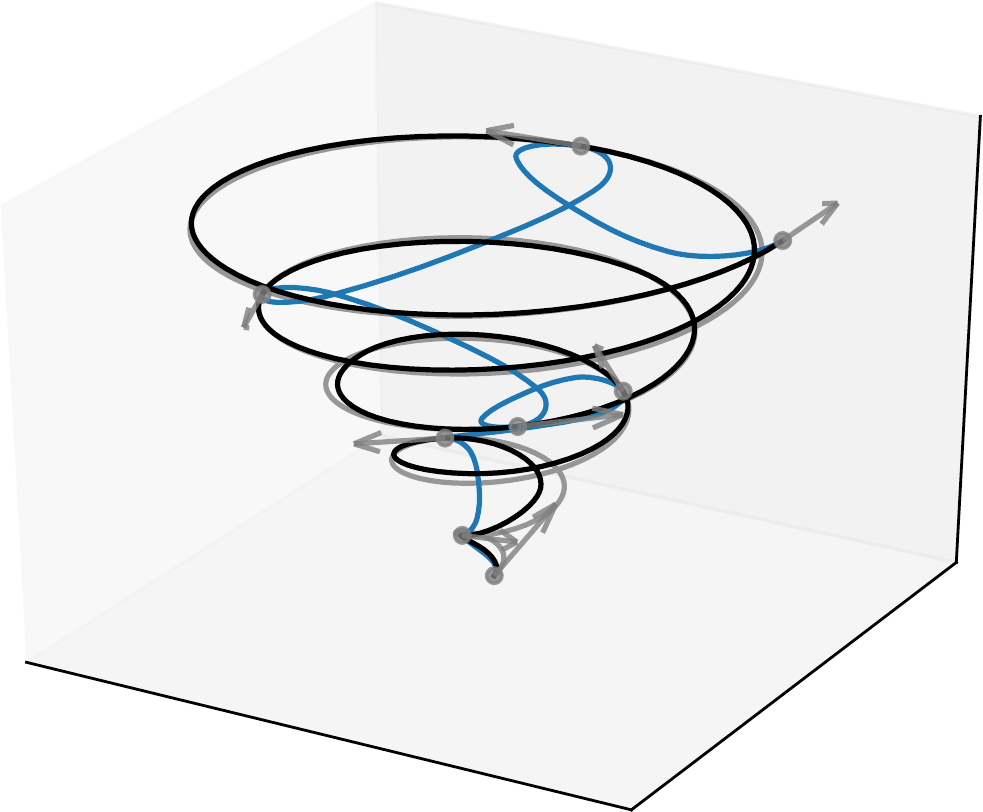}
    \quad
    \includegraphics[width=0.3\textwidth]{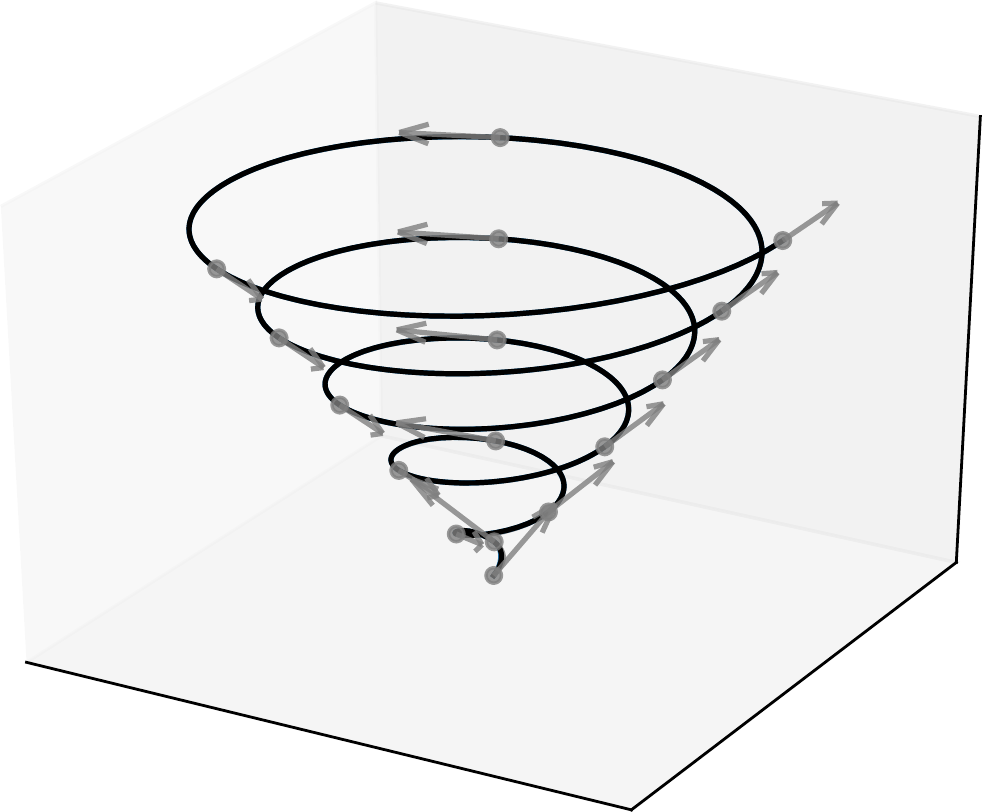}
    \caption{Comparison between two modifications of LR1. Spiral approximation in 2D (upper row) and 3D (lower row) by the IHB-scheme (black), by the modification of LR1 as in~\cite{lipovetsky2021subdivision} (blue), initial data and spirals (gray).}
    \label{fig:spiral}
\end{figure}

\subsection{Geometric versus linear Hermite interpolation}

We interpolate geometric Hermite data, sampling the $2D$ curve $\gamma(t)=(t,\sin{t})$, by the IHB-scheme and by a linear Hermite subdivision scheme. The latter subdivision was introduced in~\cite{merrien1992family}, and we term it Merrien-scheme. The Merrien-scheme is based on cubic Hermite interpolation, and uses point-tangent pairs with tangents that are not normalized as its initial data. Nevertheless, from the reasoning we present next, we compare the above two schemes when applied to geometric Hermite data, with different sampling
rates.

\begin{figure}[!hbp]
    \centering
    \begin{subfigure}[t]{0.33\textwidth}
                 \includegraphics[width=\textwidth]{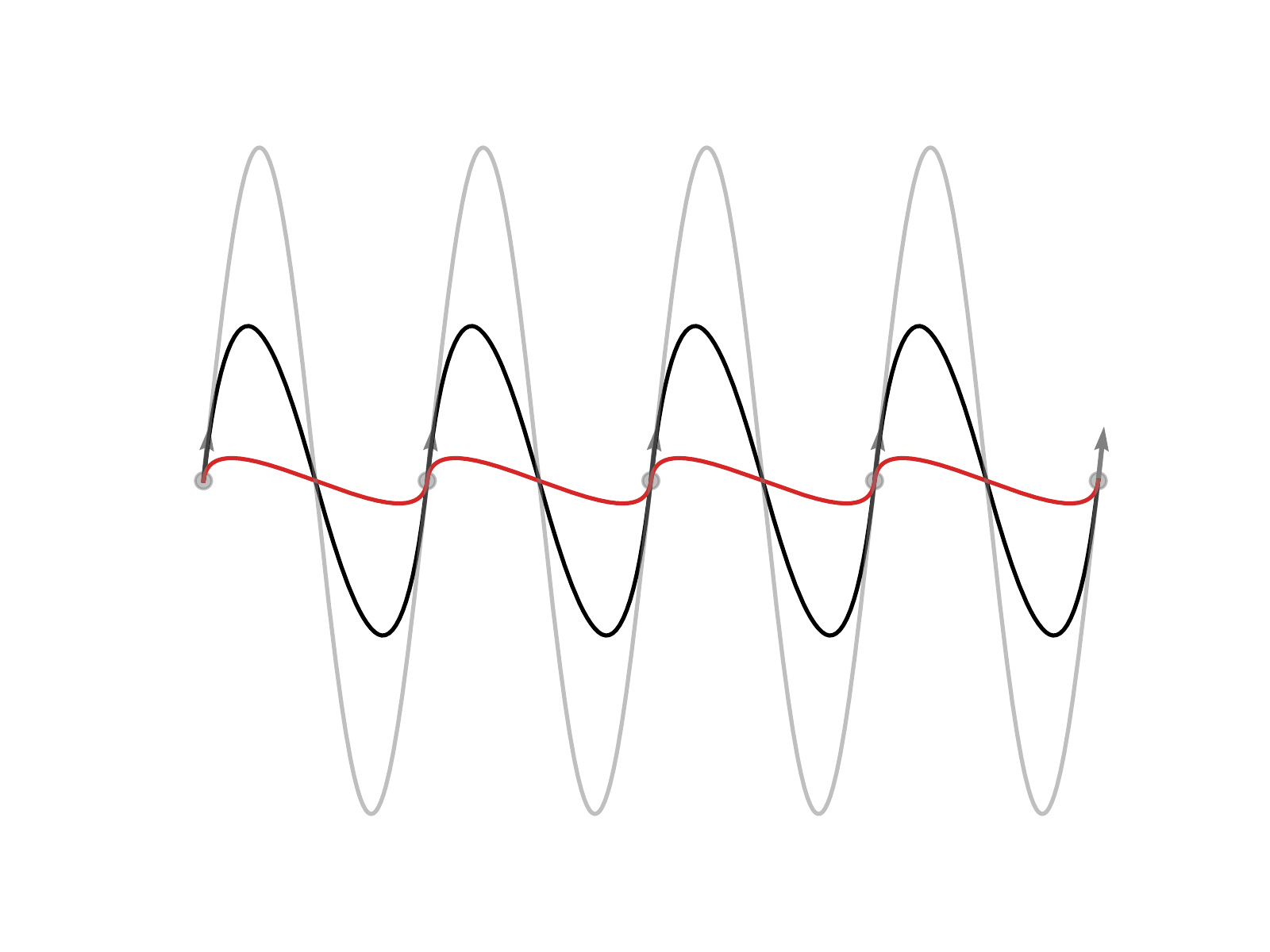}
                 \caption*{$h=2\pi$}
     \end{subfigure} \quad\quad
     \begin{subfigure}[t]{0.33\textwidth}
                 \includegraphics[width=\textwidth]{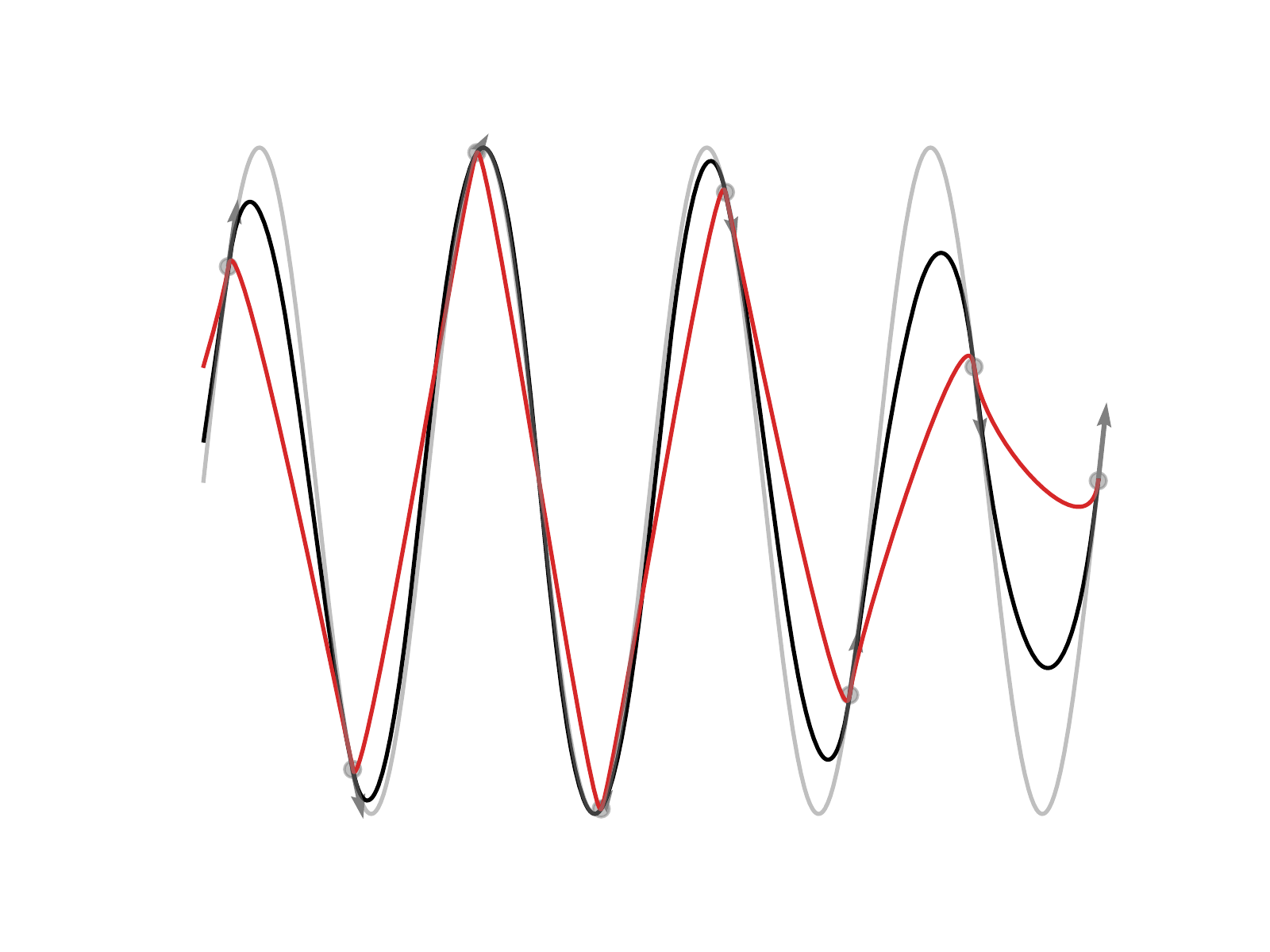}
                 \caption*{$h=\frac{10}{9}\pi$}
    \end{subfigure} \\
    \begin{subfigure}[t]{0.33\textwidth}
                 \includegraphics[width=\textwidth]{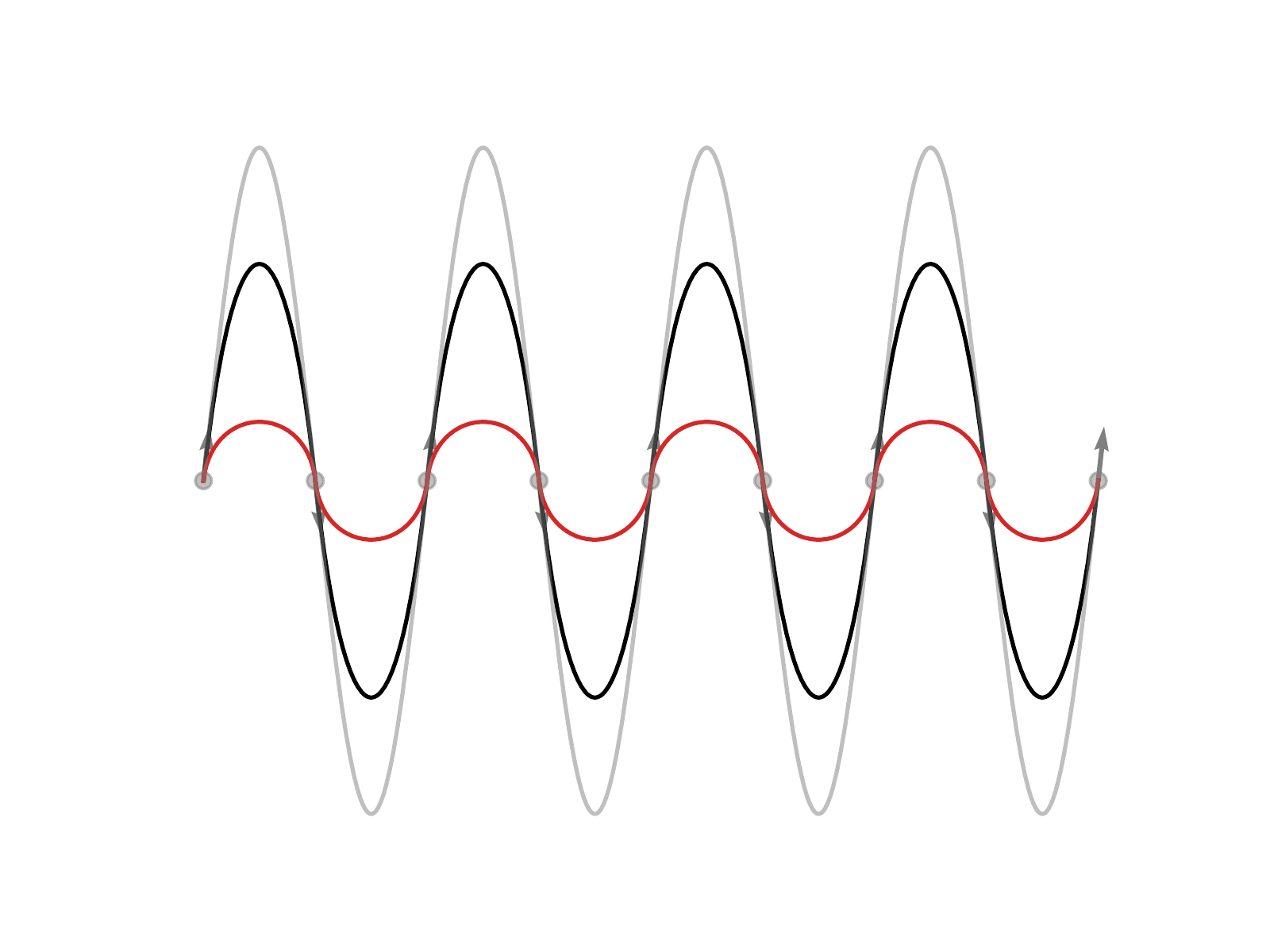}
                 \caption*{$h=\pi$}
    \end{subfigure}  \quad\quad
    \begin{subfigure}[t]{0.33\textwidth}
                 \includegraphics[width=\textwidth]{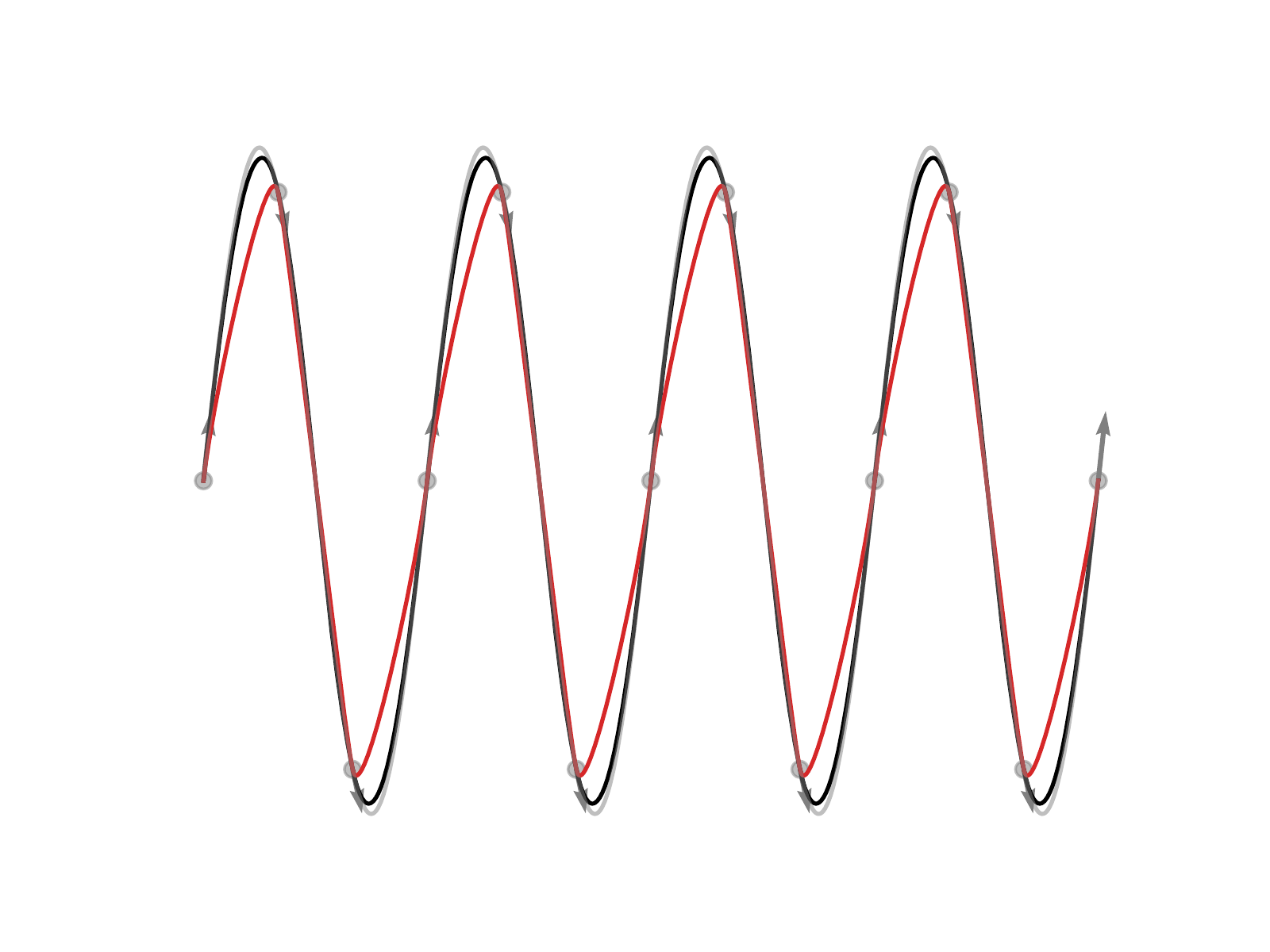}
                 \caption*{$h=\frac{2}{3}\pi$}
    \end{subfigure} \\
    \begin{subfigure}[t]{0.65\textwidth}
                 \includegraphics[width=\textwidth]{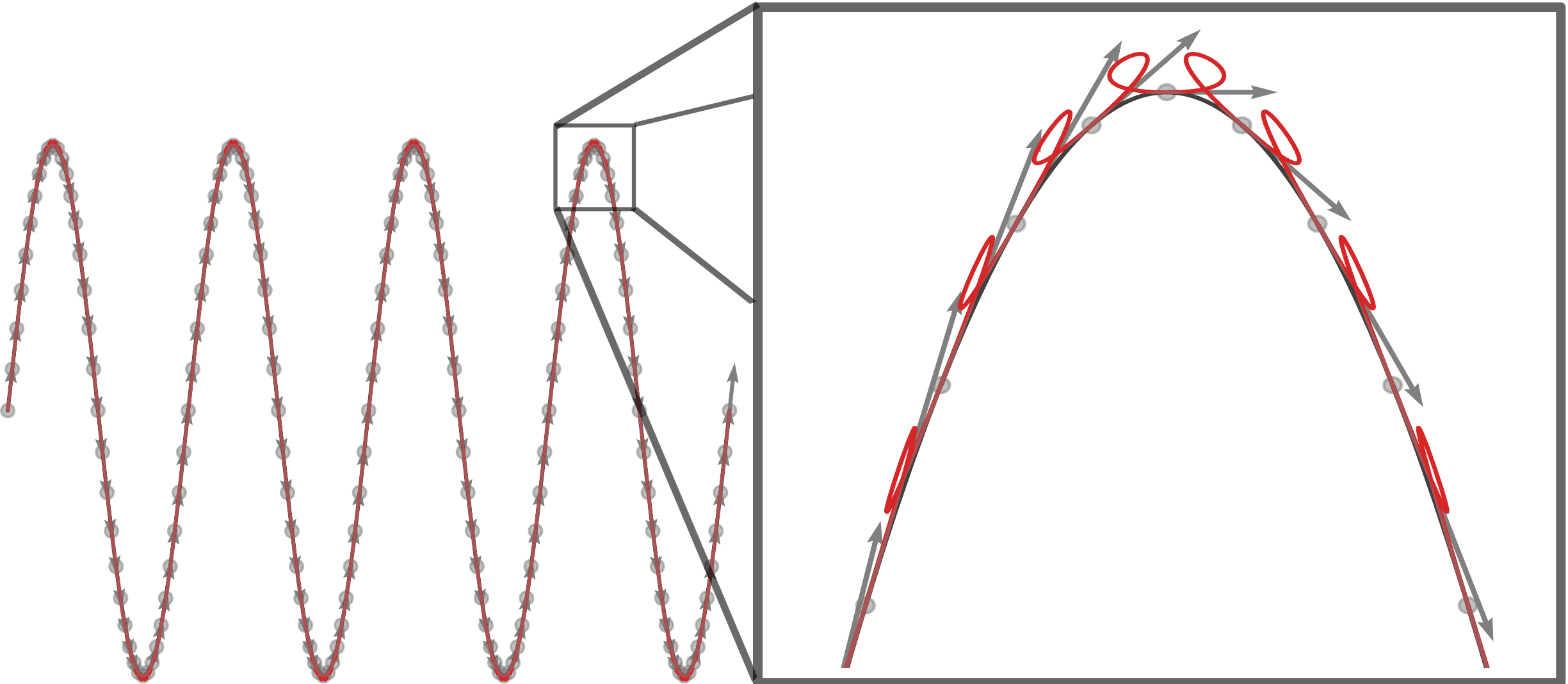}
                 \caption*{$h=0.05\pi$}
    \end{subfigure}
    \caption{Hermite Interpolation for varying sampling densities by the IHB-scheme (black) and by Merrien-scheme (red), both applied to geometric Hermite data. The sampled curve $\gamma(t)=(t,\sin{t})$ and the initial data are gray. Here $h$ is the distance between consecutive sampling points in the parameter domain.}
    \label{fig:Schemes comparison}
\end{figure}

For approximating a function by subdivision schemes, we assume that the points where we sample the function are equidistant. For curves approximation, the samples have to be close to equidistant with respect to the arc-length parameterization, see, e.g.,~\cite{floater2006parameterization}. To imitate such an ideal sampling in the Hermite setting, one must consider the distance between the points and the tangents' magnitude. Therefore, to show the effect of sampling, we fix the tangents to be normalized and apply different sampling rates in the current comparison. 

Figure~\ref{fig:Schemes comparison} demonstrates the way linear Merrien-scheme and the IHB-scheme address sparse sampling (top and middle left sub-figures) and dense one (bottom sub-figure). We observe that the sampling rate significantly affects the approximation quality of the linear scheme. In particular, sparse samples yield a ``flattened'' curve, and dense samples generate unnecessary loops. On the other hand, the B\'{e}zier average, being adaptive to the distance between the two points, provides a remedy.

\subsection{The contribution of the tangent directions to the approximation}

In the following example, we test the contribution of the tangent directions to the quality of the approximation. This contribution is clear when sampling a periodic curve in parameter distances which are equal to its period. In this case, the only reasonable approximation in the absence of tangent directions is a straight line. On the other hand, our scheme when applied to Hermite data sampled from the curve $\gamma(t)=(t,\sin{t})$, in parameter distances which are equal to its period, preserves the oscillations of $\gamma$ (see the first sub-figure of Figure~\ref{fig:naive vectors}). Note that this cannot be the case when the sampled points are extremum points. 

\begin{figure}[!htp]
    \centering
    \begin{subfigure}[t]{0.35\textwidth}
        \includegraphics[width=\textwidth]{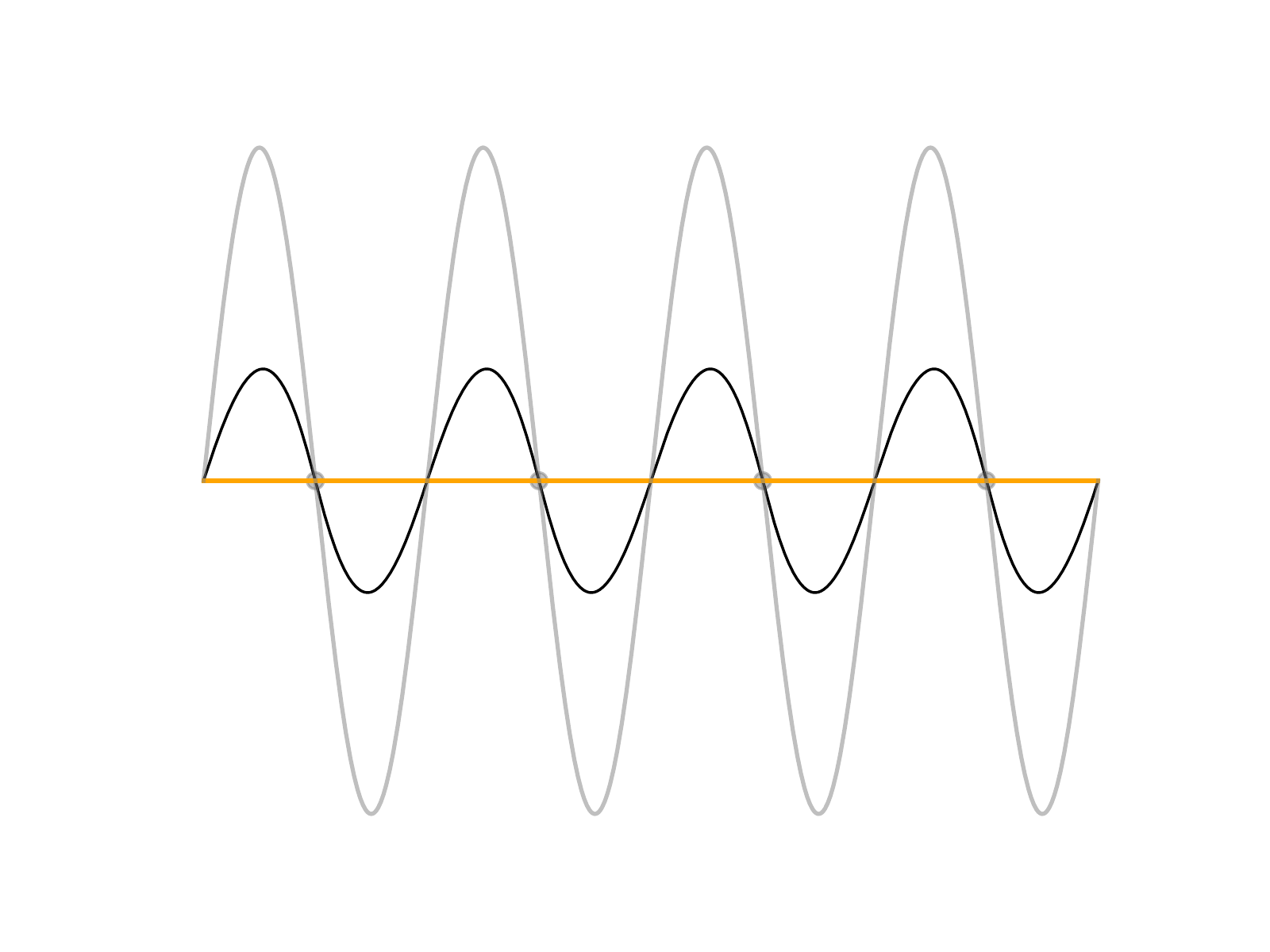}
        \caption*{$h=2\pi$}    
    \end{subfigure}\quad\quad
    \begin{subfigure}[t]{0.35\textwidth}
        \includegraphics[width=\textwidth]{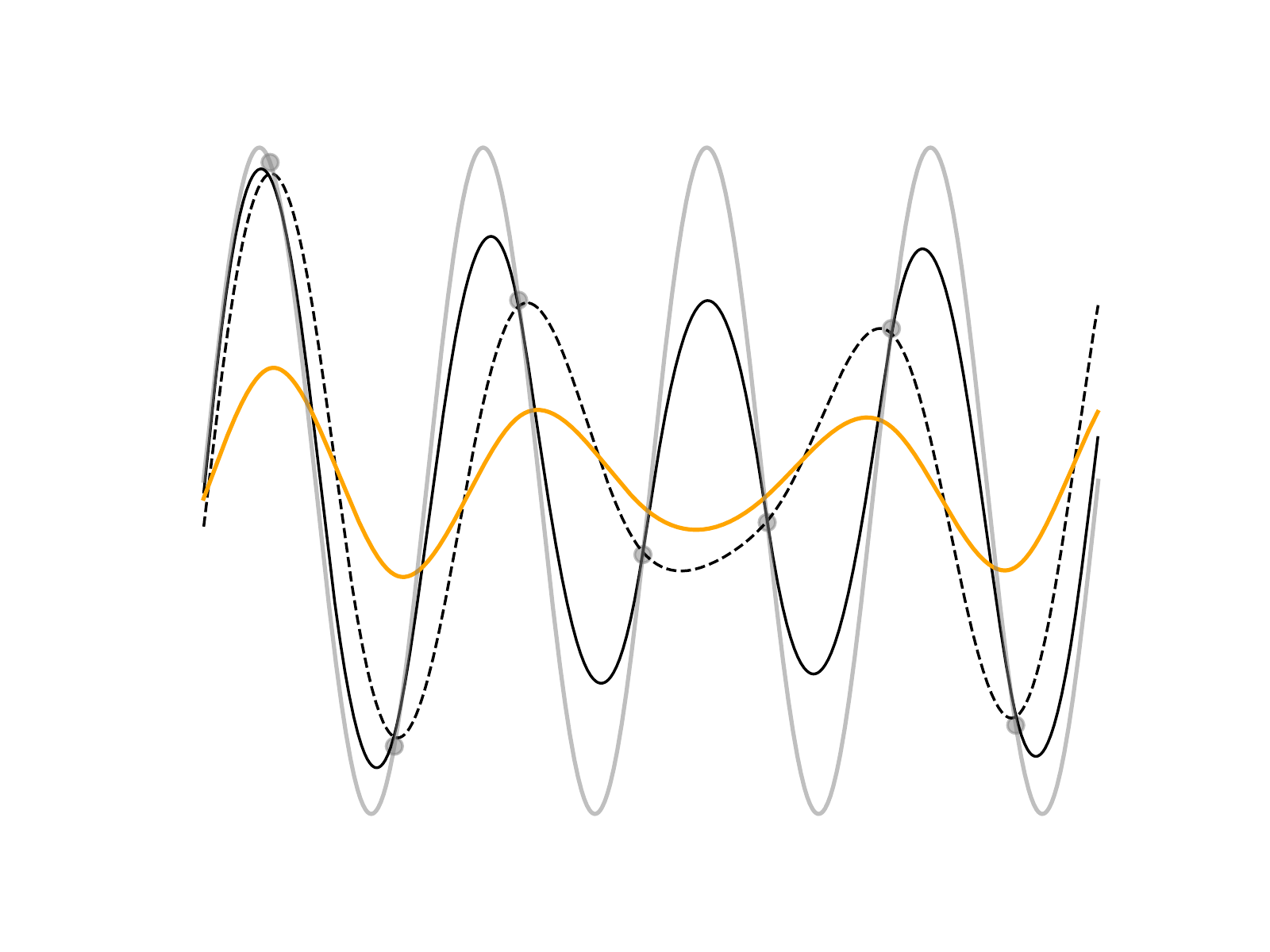}
        \caption*{$h=\frac{10}{9}\pi$}    
    \end{subfigure}\\
    \begin{subfigure}[t]{0.35\textwidth}
        \includegraphics[width=\textwidth]{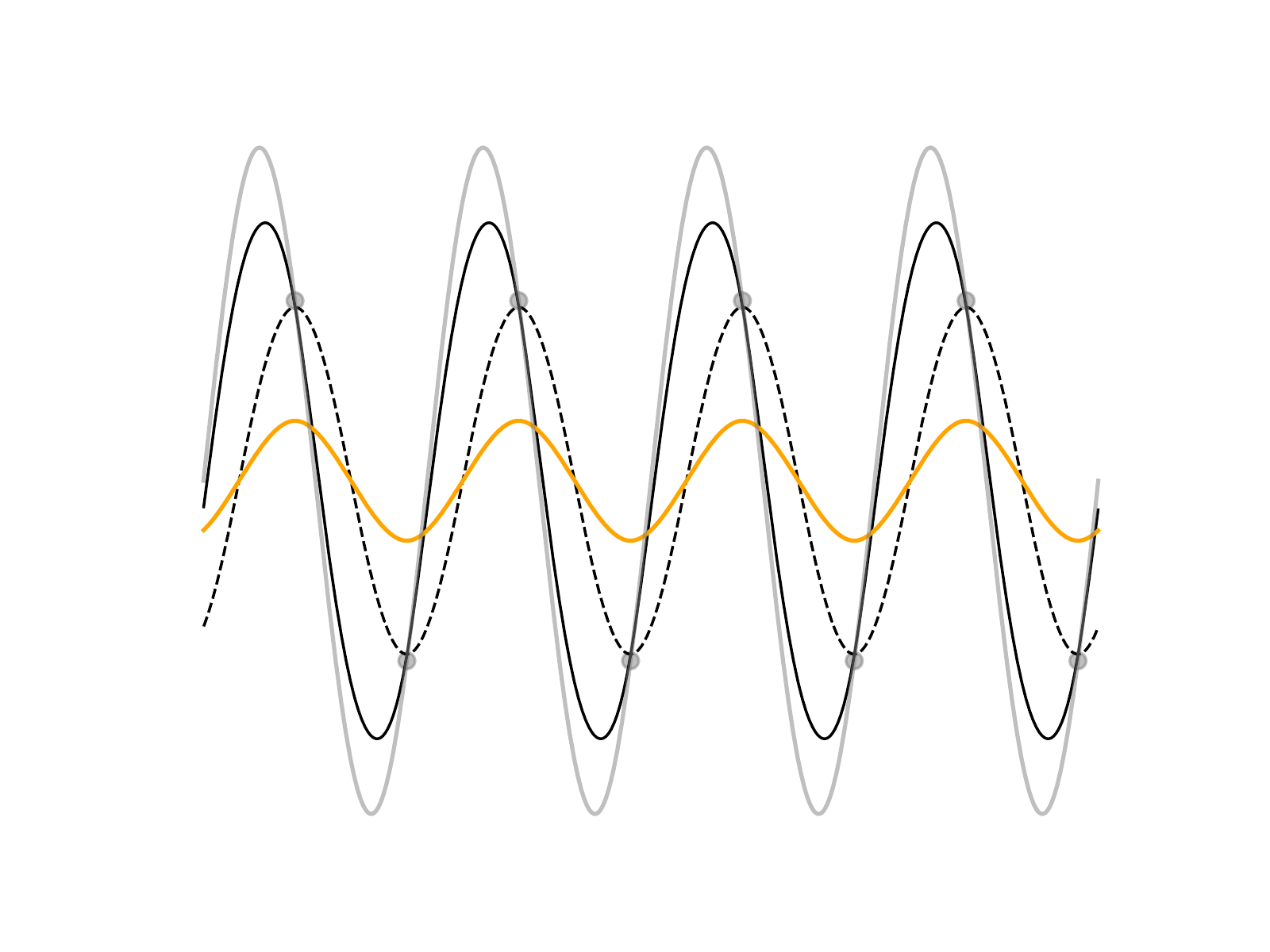}
        \caption*{$h=\pi$}    
    \end{subfigure}\quad\quad
    \begin{subfigure}[t]{0.35\textwidth}
        \includegraphics[width=\textwidth]{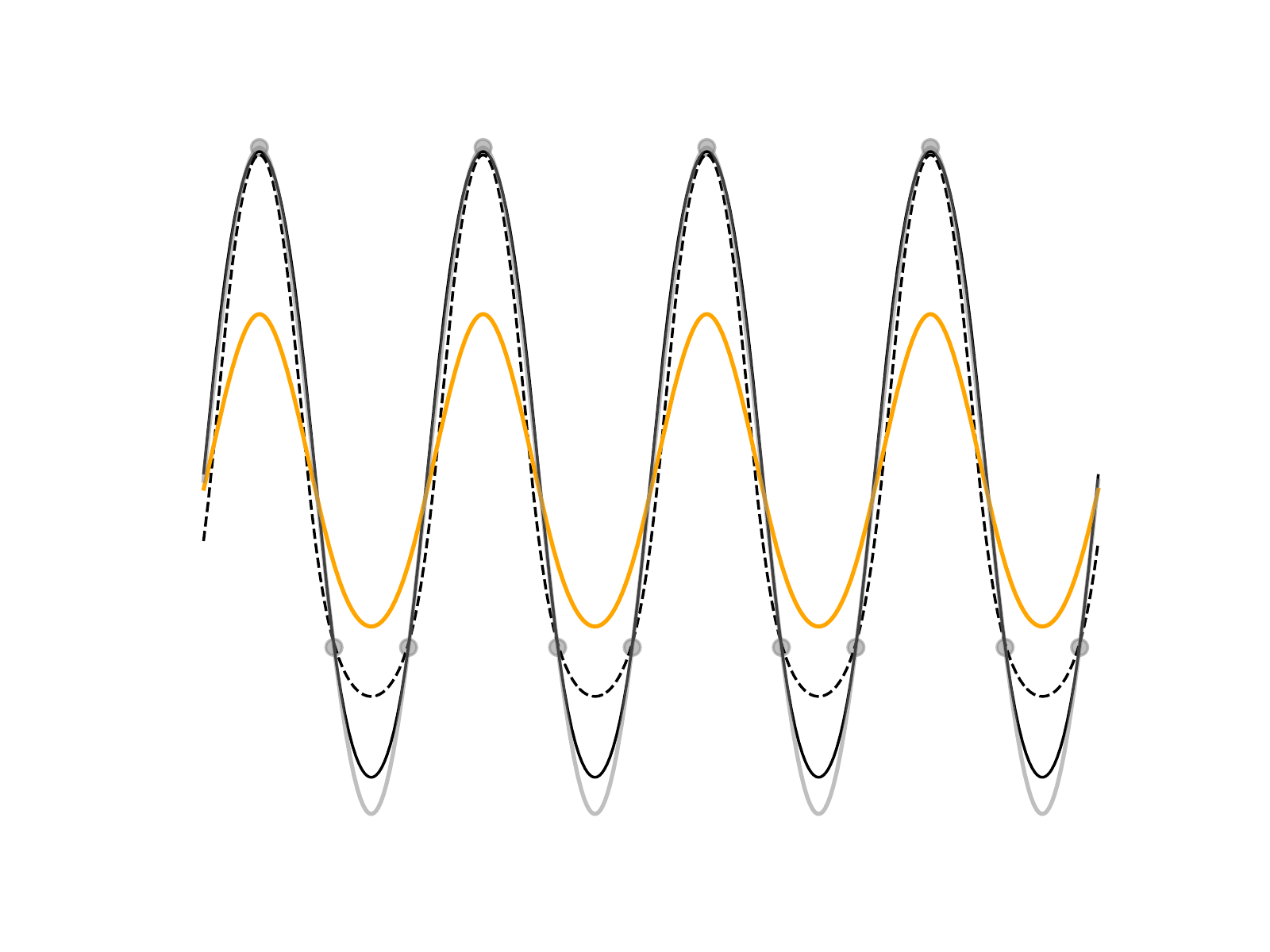}
        \caption*{$h=\frac{2}{3}\pi$}    
    \end{subfigure}\\
    \caption{Non-interpolatory approximation by LR3 applied to initial point values only (orange), the HB-LR3 applied to the same point values with estimated  tangent directions (dashed line) and the HB-LR3 applied to the corresponding Hermite data (solid black). The sampled curve $\gamma=(t,\sin{t})$ and the initial points are in gray. $h$ represents the distance between parameter values corresponding to consecutive samples. } 
    \label{fig:naive vectors}
\end{figure}

We demonstrate the contribution of the tangent directions in two ways: How well information about the initial tangent directions affects the quality of the approximation and how the Hermite setting can be effective even in the absence of information on the initial tangent directions. 

We compare the HB-LR3 scheme, as discussed in Section~\ref{subsec: IHB}, with the linear LR3 scheme. We do so by applying the HB-LR3 once to real Hermite data sampled from $\gamma=(t,\sin t)$, and once to Hermite data, obtained from data consisting of points only, by an algorithm which estimate a tangent direction at a point using the point and its immediate neighboring points; see, e.g.~\cite{yang2006normal} and Remark~\ref{rem:guesing_tangents} below. 

The results of our experiments are presented in Figure~\ref{fig:naive vectors} and indicate the clear superiority of the approximation based on the right tangent directions. Moreover, in the absence of such information, the HB-LR3 scheme still produces a better approximation than the linear LR3 scheme, which in some cases fails to describe the curve under the limited available data. Note that this comparison follows a similar experiment done in \cite{lipovetsky2016weighted} and also indicates the limitation in estimating the tangent directions, see, e.g., the bottom left subfigure. 

\begin{remark} \label{rem:guesing_tangents}
The estimation of tangent directions from the initial points is explained below.

Given points $\{p_{j}\}_{j\mathbb{\in Z}}\subseteq\mathbb{R}^ n$, the estimated tangent direction $\tilde{v}_j$ at the point $p_j$ is the weighted geodesic average of the two normalized vectors $\frac{p_{j} - p_{j-1}}{\left\Vert p_{j} - p_{j-1}\right\Vert }$ and $\frac{p_{j+1} - p_{j}}{\left\Vert p_{j+1} - p_{j}\right\Vert}$ with weight $\omega=\frac{\left\Vert p_{j} - p_{j-1}\right\Vert}{\left\Vert p_{j} - p_{j-1}\right\Vert+\left\Vert p_{j+1} - p_{j}\right\Vert}$. 

The computation of the tangent direction follows the computation of a ``naive normal" in \cite{lipovetsky2016weighted}. 
\end{remark}

\subsection{Approximation order} \label{subsec:app_order}

The last two examples emphasize visually the superior quality of the outputs of the HB-LRm schemes with $m=1$ and $m=3$ as approximants. Namely, the distance between the sampled curve and the curves generated by the aboive two HB-LRm schemes is smaller than the distance obtained by the other methods. In this section, we further examine the approximation quality via the concept of approximation order: how small is the distance between the approximation and the sampled curve as a function of the distance between the equidistant parameter values corresponding to the sampled data, as this distance becomes small.

In the functional setting, classical Hermite interpolation to the data
$(x_i,f^{(j)}(x_i)),\ i,j=0,1$ gives fourth-order approximation over $[x_0,x_1]$ if $\frac{d^4f}{dt^4}$ is continuous on $[x_0,x_1]$. Namely, under these conditions the error decays like $C (x_1-x_0)^4$, for some constant $C$, when $x_1 -x_0$ tends to zero. In particular, methods like piecewise cubic interpolation are known to have a fourth-order approximation see, e.g.,~\cite{conti2015ellipse}. Nevertheless, this claim is more limited when it comes to curves as it depends on a particular choice of samples. Specifically, when one samples a curve in the non-Hermite settings, the sampling must be taken close to equidistant with respect to the arc-length parameterization. In the Hermite setting, we must take into account also the tangents to form the analogue ``equidistant sampling'', see~\cite{floater2006parameterization}. 

In our numerrical tests  we observe that the IHB-scheme preserves the classical fourth-order approximation, independently of any particular sampling policy. We relate this observation to the fact that the B\'{e}zier average depends on the distance between the points and the angle between the tangent directions, and believe that due to this property of our average, the IHB-scheme is more robust to sampling. 

Consider a curve $\gamma$ and its approximation $\tilde{\gamma}$. A common method to measure the approximation error is the Hausdorff distance between the two curves,
\begin{equation} \label{eqn:hausdorf_dist}
    D(\gamma,\tilde{\gamma})=\max\left(\sup_{p\in\gamma}\inf_{q\in\tilde{\gamma}} d(p,q),\sup_{q\in\tilde{\gamma}}\inf_{p\in\gamma} d(q,p)\right).
\end{equation}
Given a set of Hermite samples $\left( (p_{j},v_j) \right)_{j\in J}$ of $\gamma$, as in~\eqref{eqn:sampled_hermite_data}, we denote by $h$ the maximal distance between consecutive samples. One may choose the parametric distance between adjacent samples if a parameterization is known. A natural parameterization is the arc length, which also plays a significant role in the context of approximation order, see~\cite{floater2006parameterization}. As we usually do not have access to parameterization, a possible solution is to set $h = \max_{j \in J} d(p_j,p_{j+1})$. With $h$, we denote the approximation based on the samples by $\gamma_h$. For estimating the approximation order, we consider the approximation error as a function of $h$,
\begin{equation*}
    e(h) = D(\gamma,\gamma_h).
\end{equation*}
Our ansatz is that when $h$ is small, the error function behaves as $C h^\beta$, for some constants $C$ and $\beta$.

\begin{figure}[ht]
    \centering
    \includegraphics[width=0.45\textwidth]{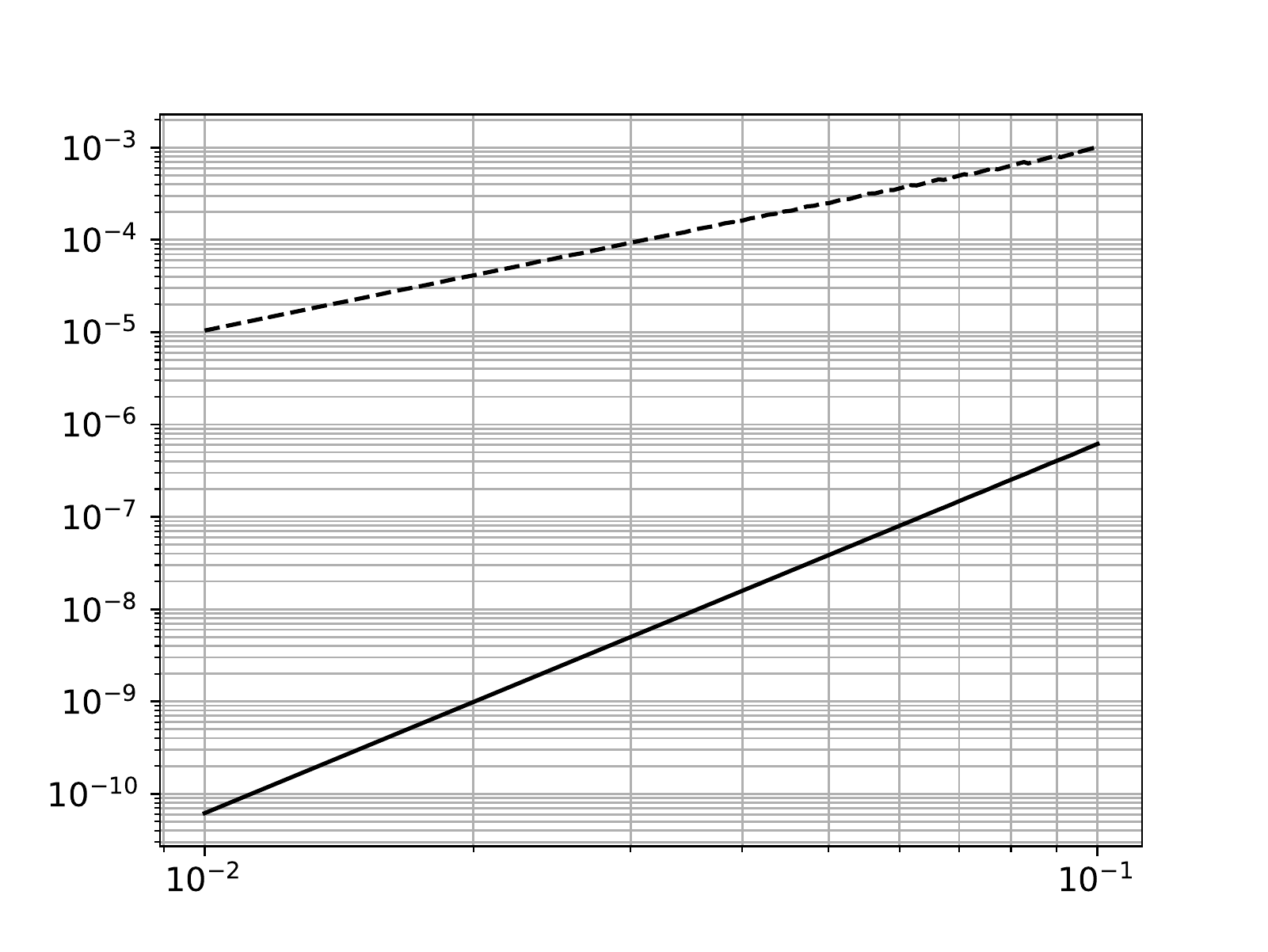}
    \caption{Order of approximation. A log-log plot of the maximal error of approximation to the curve~\eqref{eqn:gamma_example}, as obtained by IHB (solid) and by piecewise linear interpolation (dashed), as a function of the parametric distance between consecutive samples.}
    \label{fig:order}
\end{figure}

For simplicity, we consider in our numerical test a functional curve $\gamma$,
 \begin{equation} \label{eqn:gamma_example}
     \gamma(t) = (t, P(t)) \in \mathbb{R}^2 ,
 \end{equation}
where $P$ is a polynomial. In this case, we sample $\gamma$ equidistantly according to the  parameter $t$, that is $h$ is taken with respect to the first coordinate. Then, we use the maximal pointwise distance at the second coordinate of $\gamma$ as a simpler form for the error.

Next, to estimate numerically $\beta$, we use a log-log plot, presenting $\log(e(h))$ as a function of $\log(h)$ for decreasing values of $h$. In view of our ansatz we expect to obtain a line whose slope is an estimate of $\beta$. Figure~\ref{fig:order} presents the above log-log plot. This figure shows a nearly straight line with a slope equal to $4$. In other words, $e(h) \approx h^4$, that is $\beta=4$. For comparison, we also compute and plot the maximal error of the approximation of $\gamma$ by the linear LR1. The latter yields a line of slope $2$ in accordance with the known rate of approximation of LR1.

\section{Conclusions and future work}

This paper proposes a method for geometric Hermite interpolation by a G1 convergent subdivision scheme. The scheme is a part of a more general approach for addressing the problem of geometric Hermite approximation by modifying the Lane-Riesenfeld family of subdivision schemes~\cite{lane1980theoretical} by replacing the linear binary average in these schemes with an appropriate Hermite average. We give a general definition for such an average and design a specific example, the B\'{e}zier average. We also point to the problem of finding a proper metric over $\mathbb{R}^n\times S^{n-1}$. By having such a metric, we aim to identify the subspace consisting of Hermite samples of curves and adapt the property of metric-boundedness with respect to a given Hermite average. Finally, we mention the particular benefit that might arise once a metric that fulfills the so-called metric-property with respect to a given Hermite average is available. Such a Hermite-metric can significantly facilitate the convergence analysis of the modified LRm schemes.

We show how geometric properties, like circle-preserving of a Hermite average, are inherited by the adapted HB-LRm subdivision schemes, and present numerically the advantage in using a geometric method for approximating curves from their Hermite samples.

While we treat the problem of approximating curves in $\mathbb{R}^n$, the generality of our approach naturally leads to the solution of a more general problem, namely to the approximation of curves in certain non-linear spaces by a similar method. For example, the definition of the B\'{e}zier average has a natural and intrinsic generalization that fits inputs from the tangent bundle of some Riemannian manifolds. The latter is the subject of an ongoing research by the authors.

\bibliography{hermite_bib}

\begin{appendices}

\section{Validation of Lemma~\ref{angles contraction lemma}}
\label{app:sigma proof}

In this section we discuss the claim in Lemma~\ref{angles contraction lemma}. First, by the symmetry of the B\'{e}zier average (see Definition~\ref{Hermite average}), it is sufficient to investigate only $\sigma\begin{pmatrix}\begin{pmatrix}
p_{0} \\ 
v_{0} \\ 
\end{pmatrix},\begin{pmatrix}
p_{\frac{1}{2}} \\ 
v_{\frac{1}{2}} \\ 
\end{pmatrix}
\end{pmatrix}$.
We start by providing an explicit form of $\sigma\begin{pmatrix}\begin{pmatrix}
p_{0} \\ 
v_{0} \\ 
\end{pmatrix},\begin{pmatrix}
p_{\frac{1}{2}} \\ 
v_{\frac{1}{2}} \\ 
\end{pmatrix}
\end{pmatrix}$ in terms of elementary functions of $\theta_0,\theta_1$ and $\theta$. Then, we present numerical indications for the correctness of the claim by evaluating appropriate functions. Finally, we outline the required steps to make the above numerical study into a formal proof.

\subsection{Expressing \texorpdfstring{$\sigma$}{sigma} as a function of \texorpdfstring{$\theta_0,\theta_1$}{theta0, theta1} and \texorpdfstring{$\theta$}{theta}}

Let \begin{equation}
    \label{def theta00 theta01}
    \tilde{\theta }_{0,0} :=\arccos\left(\frac{\left\langle v_{0},p_{\frac{1}{2}} - p_{0}\right\rangle }{\left\Vert p_{\frac{1}{2}} - p_{0}\right\Vert }\right),\quad
\tilde{\theta }_{0,1} :=\arccos\left(\frac{\left\langle v_{\frac{1}{2}},p_{\frac{1}{2}} - p_{0}\right\rangle }{\left\Vert p_{\frac{1}{2}} - p_{0}\right\Vert }\right).
\end{equation} Then,
\begin{equation}
    \sigma\begin{pmatrix}\begin{pmatrix}
p_{0} \\ 
v_{0} \\ 
\end{pmatrix},\begin{pmatrix}
p_{\frac{1}{2}} \\ 
v_{\frac{1}{2}} \\ 
\end{pmatrix}
\end{pmatrix}=\sqrt{\tilde{\theta }_{0,0}^{2} +\tilde{\theta }_{0,1}^{2}}.
\end{equation}

Some algebraic manipulations lead to the following expressions for $\tilde{\theta}_{0,0}$ and $\tilde{\theta}_{0,1}$:
\begin{equation}
	\resizebox{.9\linewidth}{!}{\ensuremath{
\label{theta00}
\tilde{\theta }_{0,0} = \arccos\left(\frac{4\cos^{2}\left(\frac{\theta_{0} + \theta_{1}}{4}\right)\cos\left(\theta_{0}\right) + 1 - \cos\left(\theta\right)}{\sqrt{16\cos^{4}\left(\frac{\theta_{0} + \theta_{1}}{4}\right) + 2 - 2\cos\left(\theta\right) + 8\cos^{2}\left(\frac{\theta_{0} + \theta_{1}}{4}\right)\left(\cos\left(\theta_{0}\right) - \cos\left(\theta_{1}\right)\right)}}\right), }}
\end{equation}
\begin{equation}
	\resizebox{.9\linewidth}{!}{\ensuremath{
\label{theta01}
\tilde{\theta }_{0,1} = \arccos\left(\frac{\frac{\cos^{2}\left(\frac{\theta_{0} + \theta_{1}}{4}\right)\left(12\cos^{2}\left(\frac{\theta_{0} + \theta_{1}}{4}\right) + \cos\left(\theta_{0}\right) - 5\cos\left(\theta_{1}\right)\right)}{\sqrt{18\cos^{4}\left(\frac{\theta_{0} + \theta_{1}}{4}\right) + 1 + \cos\left(\theta\right) - 6\cos^{2}\left(\frac{\theta_{0} + \theta_{1}}{4}\right)\left(\cos\left(\theta_{0}\right) + \cos\left(\theta_{1}\right)\right)}}}{\sqrt{8\cos^{4}\left(\frac{\theta_{0} + \theta_{1}}{4}\right) + 1 - \cos\left(\theta\right) + 4\cos^{2}\left(\frac{\theta_{0} + \theta_{1}}{4}\right)\left(\cos\left(\theta_{0}\right) - \cos\left(\theta_{1}\right)\right)}}\right). }}
\end{equation}
Thus, $\sigma\begin{pmatrix}\begin{pmatrix}
p_{0} \\ 
v_{0} \\ 
\end{pmatrix},\begin{pmatrix}
p_{\frac{1}{2}} \\ 
v_{\frac{1}{2}} \\ 
\end{pmatrix}
\end{pmatrix}$ is a function of $\theta_0,\theta_1$ and $\theta$. Recall that by its definition, see~\eqref{sigma definition}, the quantity $\sigma\begin{pmatrix}\begin{pmatrix}
p_{0} \\ 
v_{0} \\ 
\end{pmatrix},\begin{pmatrix}
p_{1} \\ 
v_{1} \\ 
\end{pmatrix}
\end{pmatrix}$ is also a function of $\theta_0 $ and $\theta_1$.

To prove the inequality of Lemma~\ref{angles contraction lemma} we consider two auxiliary functions, the quotient and the difference:
\begin{align}
     Q\left(\theta_{0} ,\theta_{1},\theta\right) & = \frac{\tilde{\theta }_{0,0}^{2} +\tilde{\theta }_{0,1}^{2}}{\theta_{0}^{2} + \theta_{1}^{2}}
     \label{Q} \\
      D\left(\theta_{0} ,\theta_{1},\theta\right) & = 0.9\left(\theta_0^2+\theta_1^2\right)-\tilde{\theta }_{0,0}^{2}-\tilde{\theta }_{0,1}^{2}
     \label{D}
\end{align}
In particular, we have to show that for all optional choices of $\theta_0,\theta_1,\theta$, we obtain that $Q$ of~\eqref{Q} is not greater than $0.9$ or equivalently that $D$ of~\eqref{D} is non negative.

By the assumption of Lemma~\ref{angles contraction lemma}, $\sqrt{\theta_0^2+\theta_1^2}\leq\frac{3\pi }{4}$. Also, since $\theta_0,\theta_1$ and $\theta$ are the angular distances between three vectors, we have by the triangle inequality that $\lvert\theta_{1} - \theta_{0}\rvert \leq \theta\leq\theta_{0} + \theta_{1}$. Therefore, we define the set of all relevant values of $\theta_0,\theta_1$ and $\theta$ to be 
\begin{equation}     \label{Omega}
    \Omega  :=\{ (\theta_{0} ,\theta_{1},\theta)\in [0,\pi]^3 \colon \sqrt{\theta_{0}^{2} + \theta_{1}^{2}}\leq\frac{3\pi }{4}, \, \lvert\theta_{1} - \theta_{0}\rvert \leq \theta\leq\theta_{0} + \theta_{1}
\}
\end{equation}
As a first illustration, we present a 4D visualization of $Q$ and $D$ over $\Omega$ in Figure~\ref{fig:4d}. The values of the functions appear in color and indicate that indeed $Q$ is less than $0.9$ ans $D$ is non-negative when considering the domain $\Omega$.
\begin{figure}
    \centering
    \begin{subfigure}[t]{0.45\textwidth}
         \includegraphics[width=\textwidth]{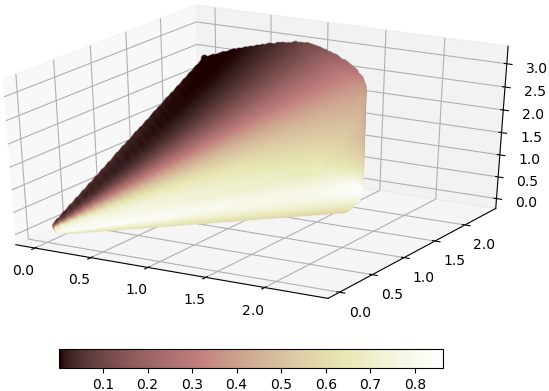}
         \caption*{The quotient $Q$ over $\Omega$}
     \end{subfigure} \quad
     \begin{subfigure}[t]{0.45\textwidth}
         \includegraphics[width=\textwidth]{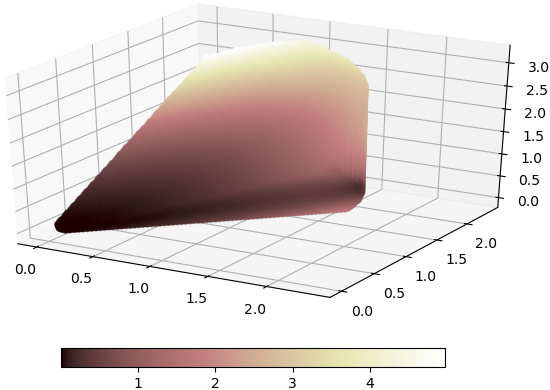}
         \caption*{The difference $D$ over $\Omega$}
     \end{subfigure}
    \caption{A 4D visualization of the functions $Q$ of~\eqref{Q} and $D$ of~\eqref{D} over $\Omega \subset \mathbb{R}^3$ of~\eqref{Omega}. The functions values are given in color, see the horizontal colorbars, and show that the samples satisfy the desired bounds, namely that $Q\le 0.9$ and $D\ge 0$.}
    \label{fig:4d}
\end{figure}

\subsection{Ensuring non-negativity of the difference function}

Here, we provide a more comprehensive numerical method for ensuring the non-negativity of $D(\theta_0,\theta_1,\theta)$ over $\Omega$. The basic idea is to compute $D$ over a grid of points covering $\Omega$, which is dense enough to guarantee non-negativity at all the
points of $\Omega$, if $D$ is non-negative at the points of the grid. 

In more detail, since the function $D(\theta_0,\theta_1,\theta)$ vanishes at the origin,
we first determine a neighborhood of the origin where the minimum of $D$ is zero.Then we sample the function $D$ on the rest of $\Omega$ using steps with a length based on a bound of the analytic form of the gradient, derived from the explicit form of $D$. Taking into account also the finite precision of the machine we use, such a procedure proves that $D$ is positive over
$\Omega$.

We recall two basic results of classical analysis. The first result is concerned with possible locations of points where a minimum value of a function is achieved.
\begin{result} \label{2}
The minimum of $f$ in $\Omega$ is obtained on the boundaries of $\Omega$ ($\partial\Omega$), or in the set of critical points of $f$ in the interior of $\Omega$, i.e., $\{ x\in\Omega\setminus\partial\Omega : \nabla f(x)=0 \}$.
\end{result}
The second result implies how to sample $\Omega$ outside the neighbourhood of the origin.
\begin{result}
    \label{1}
Assume a gradient bound $\| {\nabla f} \|_\infty \leq M$ over $\Omega$ and a local function bound $\min_B(f)=m$, $B \subseteq \Omega$. Then, if $d(x,B )\leq d^\ast$ for any $x\in \Omega$, we have 
\[ f(x)\geq m-M\times d^\ast , \quad x \in B.\] 
In particular, if $d^\ast=\frac{m-\varepsilon}{M}$ then $f(x)\geq \varepsilon$ for any $x\in B$.
\end{result}

The strategy of our proof consists of the following three main stages:
\begin{enumerate}
	\item Split $\Omega$ into two domains, $\Omega = \Omega_1\bigcup\Omega_2$. In particular, find  a radius $r>0$ of an open ball centerd in the origin $B_r(0)$ such that in
	 $\Omega_1=B_r(0)\bigcap\Omega$ the minimum of $D$ is at the origin. \label{r}
	\item Find M such that $\norm{\nabla D}_\infty\leq M$ in $\Omega$ (enough in $\Omega_2$).
	\label{M}
	\item Construct a grid $G$ over $\Omega_2 = \Omega \setminus \Omega_1$ such that 
	\[  d(x,G) \leq \frac{f(g^{\ast}) - \varepsilon}{M}, \quad g^\ast = \arg\min_{g \in G} d(x,g), \quad x \in \Omega_2.  \] 
	Here $\varepsilon$ is a bound on the roundoff error, related to the machine double precision ($2^{-52}\approx2.2\times 10^{-16}$).
	\label{G}
\end{enumerate}
The last step above guarantees, in view of Result~\ref{1}, the non-negativity of $D$ over $\Omega_2$. As to $\Omega_1$, we show that the gradient is non vanishing and deduce by Result~\ref{2} that the non-negativity of $D$ depends on the values of $D$ on the boundaries of $\Omega_1$, which take the form
	\begin{equation*}
		\partial\Omega_1 = \left(\left(P_1\bigcup P_2\bigcup P_3\right)\bigcap \Omega_1\right)\bigcup \left(C\bigcap\Omega\right).
	\end{equation*}
Here $P_i, i=1,2,3$ are three  planes in $\mathbb{R}^3$ and $C=\partial B_r(0)$. The next lemma presents values of $r$ and $M$ allowing  for the application of our strategy of proof.
 
\begin{lemma}  \label{lemma:radius_and_bound}
\phantom{The following}
\begin{enumerate}[label=(\roman*)]
    \item 
    If $x\in B_{0.1}(0)$ and $x\neq 0$  then $\nabla D(x)\neq 0$.
    \item
     $\norm{\nabla D}_\infty\leq 6.3\times 10^4$.
\end{enumerate}
\end{lemma}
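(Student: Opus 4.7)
The plan is to decompose the analysis into a local near-origin argument for part (i) and a closed-form derivative bound across $\Omega$ for part (ii); the latter will be made fully rigorous in combination with the computer-aided grid sweep that the paper subsequently invokes.

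For (i), I would Taylor-expand the arccos expressions in~\eqref{theta00}--\eqref{theta01} about the origin. Using $\cos\alpha = 1-\alpha^{2}/2+O(\alpha^{4})$ together with $\arccos(1-s)=\sqrt{2s}+O(s^{3/2})$, a short direct calculation shows that the arccos argument in each of \eqref{theta00}, \eqref{theta01} equals $1-O(\|x\|^{2})$ with $x=(\theta_{0},\theta_{1},\theta)$, so that $\tilde\theta_{0,i}^{2}=Q_{i}(x)+O(\|x\|^{4})$ for homogeneous quadratic $Q_{i}$. Substituting into \eqref{D} yields $D(x)=Q(x)+R(x)$ with $Q$ a homogeneous quadratic form and $R=O(\|x\|^{3})$ uniformly on $\Omega\cap B_{0.1}(0)$. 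An explicit symbolic computation identifies the $3\times 3$ Hessian of $Q$; provided $Q$ is not identically zero, $\nabla Q$ is a nontrivial linear map and vanishes only at the origin, and the claim follows by verifying that $0.1$ is small enough for $\|\nabla R(x)\|$ to be a strict fraction of $\|\nabla Q(x)\|$ on $\|x\|=0.1$ — a routine cubic-versus-linear comparison. The subcase in which $Q$ degenerates along some direction is handled by pushing the expansion one further order along that direction and repeating the argument.

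For (ii), I would compute $\partial D/\partial\theta_{j}$ directly via the chain rule applied to \eqref{theta00}--\eqref{theta01}. Each term $\partial\tilde\theta_{0,i}/\partial\theta_{j}$ takes the form $-(\partial A_{i}/\partial\theta_{j})/\sqrt{1-A_{i}^{2}}$, where $A_{i}$ is the arccos argument. The analytic content is a uniform positive lower bound on the denominators $\sqrt{1-A_{i}^{2}}$ and on the radicals inside the $A_{i}$; both follow on $\Omega$ from the hypothesis $\sqrt{\theta_{0}^{2}+\theta_{1}^{2}}\le 3\pi/4$, which keeps $\cos^{2}((\theta_{0}+\theta_{1})/4)$ bounded below by an explicit positive constant, combined with the triangle-inequality constraint $|\theta_{1}-\theta_{0}|\le\theta\le\theta_{0}+\theta_{1}$. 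Once those lower bounds are in place, the trivial estimates $|\cos|,|\sin|\le 1$ on the numerator trigonometric polynomials produce a closed-form but pessimistic ceiling on $\|\nabla D\|_{\infty}$. The sharp constant $6.3\times 10^{4}$ is then certified by evaluating the symbolic gradient on a grid dense enough that a Lipschitz bound for $\nabla D$, obtained from a further differentiation of the same closed form, guarantees the sup-norm across $\Omega$ from the grid samples alone.

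The main obstacle is the bookkeeping in part (ii): the expressions in \eqref{theta00}--\eqref{theta01} are intricate enough that any purely by-hand bound is necessarily loose, so the sharp numerical ceiling $6.3\times 10^{4}$ realistically requires the computer-aided grid sweep described above with a rigorous safety margin, consistent with the paper's broader strategy of combining analytic reductions with finite-precision certification. Part (i) is conceptually cleaner; the only real subtlety there is excluding the degenerate-leading-form subcase, after which the specific radius $0.1$ is easily seen to be small enough for the remainder comparison to go through.
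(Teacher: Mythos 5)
You should first be aware that the paper provides no proof to compare against: it states that the proof of Lemma~\ref{lemma:radius_and_bound} is ``highly technical'' and omits it entirely, so your proposal stands or falls on its own. For part (i), your architecture is the natural one and essentially works; in fact your hedge about a degenerate leading form is unnecessary. Since all dependence in \eqref{theta00}--\eqref{theta01} enters through cosines, the expansions contain only even-degree terms (so the remainder is $O(\norm{x}^4)$, better than your claimed $O(\norm{x}^3)$), and a direct computation gives $\tilde\theta_{0,0}^2=\tfrac{3}{4}\theta_0^2+\tfrac{1}{4}\theta_1^2-\tfrac{3}{16}\theta^2+O(\norm{x}^4)$ and $\tilde\theta_{0,1}^2=\tfrac{1}{4}\theta_0^2+O(\norm{x}^4)$ (both consistent with the circle case $\theta_0=\theta_1$, $\theta=2\theta_0$, where the halved arc gives new angles $\theta_0/2$). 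Hence $D=-0.1\,\theta_0^2+0.65\,\theta_1^2+\tfrac{3}{16}\theta^2+O(\norm{x}^4)$, with nonsingular (indefinite) Hessian $\operatorname{diag}(-0.2,\,1.3,\,3/8)$ at the origin, so $\norm{\nabla Q(x)}\geq 0.2\norm{x}$ and the cubic-versus-linear comparison can close once an explicit remainder constant is produced --- that explicit constant being precisely the technical content the paper skips. One inference in your write-up is wrong as stated, however: ``provided $Q$ is not identically zero, $\nabla Q$ \ldots vanishes only at the origin'' is false --- a nonzero semidefinite form has $\nabla Q=0$ on the kernel of its Hessian; what you need, and what happens to hold here, is nonsingularity of the Hessian, so you should verify it rather than treat degeneracy as an exotic subcase.

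Part (ii) contains a genuine gap: your claimed ``uniform positive lower bound on the denominators $\sqrt{1-A_i^2}$'' over $\Omega$ of \eqref{Omega} does not exist. The arguments $A_i$ tend to $1$ as $(\theta_0,\theta_1,\theta)\to 0$, and the near-collinearity locus extends away from the origin: from the expansion above, $A_0=1+O(\norm{x}^4)$ along the admissible ray $(\theta_0,\theta_1,\theta)\propto(1,3,4)$ (which satisfies $\theta=\theta_0+\theta_1$), so $\sqrt{1-A_0^2}$ becomes arbitrarily small even on $\Omega_2=\Omega\setminus B_{0.1}(0)$, and your chain-rule bound on $\partial\tilde\theta_{0,i}/\partial\theta_j$ diverges there. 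The blow-up is an artifact of differentiating $\tilde\theta_{0,i}$ rather than the quantity that enters $D$ of \eqref{D}: since $\frac{d}{dA}\arccos^2(A)=-2\arccos(A)/\sqrt{1-A^2}$ extends continuously to $A=1$ (with value $-2$; indeed $\arccos^2$ is analytic at $1$), the gradient $\nabla\big(\tilde\theta_{0,i}^2\big)$ remains bounded near $A_i=1$ whenever $\nabla A_i$ is, so the estimate must be organized around this combined factor. The only true obstruction is then $A_i$ near $-1$ (i.e., $\tilde\theta_{0,i}$ near $\pi$), which must be separately excluded on $\Omega$ --- plausible under $\sigma\leq 3\pi/4$ from Lemma~\ref{angles contraction lemma}'s hypothesis, but you supply no argument. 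Your final grid-plus-Lipschitz certification of the ceiling $6.3\times 10^4$ is a legitimate strategy in principle (the paper itself remarks the constant is far from sharp), but it inherits the same defect: the Lipschitz bound on $\nabla D$ must likewise avoid bare $(1-A_i^2)^{-1/2}$ factors, or it is infinite on the set where your part (i) analysis needs the most care.
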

The proof of Lemma~\ref{lemma:radius_and_bound} is highly technical, and so we omit it. Nevertheless, it is important to note that the values $r=0.1$ and $M= 6.3\times 10^4$ are not optimal. Specifically, numerical tests imply that the actual bound $M$ is much lower, and we conjecture that $M$ can be decreased down to $10$. This issue is fundamental as higher $M$ means a much denser grid $G$ in step~\ref{G} above. 

To illustrate the last point above, we ran a test with $M=10$, which yielded an exhaustive search within less than $0.1$ of a second. In this case, the grid $G$ consisted of $3,747,179$ points scattered over $\Omega_2$, and the exhaustive search ended up with success; that is, non-negativity was validated. To compare, the same test for $M=100$ took $68$ seconds and required a search over $3,469,028,128$ points. The tests were executed on a standard desktop (Windows-based, runs on Intel i7-9700 CPU with 32G of RAM), and the Matlab script is available at \url{https://github.com/nirsharon/Hermite_Interpolation}. We extended the above test scenario up to $M=1000$ on a designated server. Like the preceding ones, this test also yields a success, meaning the non-negativity was retained. However, we have not reached with our tests the value of $M$, as computed analytically in Lemma~\ref{lemma:radius_and_bound}. Nevertheless, the current test reassures with high confidence that the non-negativity, in this case, is indeed obtained.

\section{Proofs of the B\'{e}zier average properties}
\label{proofs}

\begin{proof}[\textbf{Proof of Lemma~\ref{non vanishing vectors}}]
One can verify that 
\begin{equation*}
    b\left(\frac{1}{2}\right)=\frac{1}{2}(p_0+p_1)+\frac{3}{8}\alpha(v_0-v_1)
\quad \text{ and } \quad
    \frac{d}{dt}b\left(\frac{1}{2}\right)=\frac{3}{2}\left(p_1-p_0-\frac{1}{2}\alpha(v_0+v_1)\right).
\end{equation*}
Assume by contradiction that $\frac{d}{dt}b\left(\frac{1}{2}\right)=0$, and observe that in view of~\eqref{eqn:alpha}, we have that $ \left\Vert p_{1} - p_{0} - \frac{1}{2}\alpha\left(v_{0} + v_{1}\right)\right\Vert^{2}$ is equal to
\begin{align*}
    &\left\Vert p_{1} - p_{0}\right\Vert^{2} +\frac{\left\Vert p_{1} - p_{0}\right\Vert^{2}\left\Vert v_{0} + v_{1}\right\Vert^{2}}{36\cos^{4}\left(\frac{\theta_{0} + \theta_{1}}{4}\right)} -\frac{\left\Vert p_{1} - p_{0}\right\Vert }{3\cos^{2}\left(\frac{\theta_{0} + \theta_{1}}{4}\right)}\left\langle p_{1} - p_{0},v_{0} + v_{1}\right\rangle \\
    & =\left\Vert p_{1} - p_{0}\right\Vert^{2}\left(1 +\frac{1 + \cos\left(\theta\right)}{18\cos^{4}\left(\frac{\theta_{0} + \theta_{1}}{4}\right)} -\frac{\cos\left(\theta_{0}\right) + \cos\left(\theta_{1}\right)}{3\cos^{2}\left(\frac{\theta_{0} + \theta_{1}}{4}\right)}\right).
\end{align*}
Since $p_1\neq p_0$ we get $ 1 +\frac{1 + \cos\left(\theta\right)}{18\cos^{4}\left(\frac{\theta_{0} + \theta_{1}}{4}\right)} -\frac{\cos\left(\theta_{0}\right) + \cos\left(\theta_{1}\right)}{3\cos^{2}\left(\frac{\theta_{0} + \theta_{1}}{4}\right)} = 0$.
Thus,
\[ 18\cos^{4}\left(\frac{\theta_{0} + \theta_{1}}{4}\right) + 1 + \cos\left(\theta\right) - 6\cos^{2}\left(\frac{\theta_{0} + \theta_{1}}{4}\right)\left(\cos\left(\theta_{0}\right) + \cos\left(\theta_{1}\right)\right) = 0 . \]
However, 
\begin{align*}
    & 18\cos^{4}\left(\frac{\theta_{0} + \theta_{1}}{4}\right) + 1 + \cos\left(\theta\right) - 6\cos^{2}\left(\frac{\theta_{0} + \theta_{1}}{4}\right)\left(\cos\left(\theta_{0}\right) + \cos\left(\theta_{1}\right)\right) \\
    &\geq 18\cos^{4}\left(\frac{\theta_{0} + \theta_{1}}{4}\right) - 6\cos^{2}\left(\frac{\theta_{0} + \theta_{1}}{4}\right)\left(\cos\left(\theta_{0}\right) + \cos\left(\theta_{1}\right)\right) \\
    & = 6\cos^{2}\left(\frac{\theta_{0} + \theta_{1}}{4}\right)\left(3\cos^{2}\left(\frac{\theta_{0} + \theta_{1}}{4}\right) - \cos\left(\theta_{0}\right) - \cos\left(\theta_{1}\right)\right)\\
    & = \resizebox{0.85\linewidth}{!}{\ensuremath{
    		6\cos^{2}\left(\frac{\theta_{0} + \theta_{1}}{4}\right)\left(\frac{3\left(\cos\left(\frac{\theta_0+\theta_1}{2}\right)+1\right)}{2}-2\cos\left(\frac{\theta_0+\theta_1}{2}\right)\cos\left(\frac{\theta_0-\theta_1}{2}\right)\right) }}\\
    & = 6\cos^{2}\left(\frac{\theta_{0} + \theta_{1}}{4}\right)\left(\frac{3}{2}-\cos\left(\frac{\theta_0+\theta_1}{2}\right)\left(2\cos\left(\frac{\theta_0-\theta_1}{2}\right)-\frac{3}{2}\right)\right)>0 ,
\end{align*} 
The last inequality holds, since by~\eqref{theta j definition} we have $\abs{\theta_0-\theta_1}\in[0,\pi]$, and therefore we also get $\abs{2\cos\left(\frac{\theta_0-\theta_1}{2}\right)-\frac{3}{2} }\leq \frac{3}{2}$, with equality only when $\abs{\theta_0-\theta_1 }=\pi$. However in this case $\abs{\cos\left(\frac{\theta_0+\theta_1}{2}\right)}=0$.

The last displayed inequality contradicts our assumption that $\frac{d}{dt}b\left(\frac{1}{2}\right)=0$,
and therefore, we obtain that $ v_{\frac{1}{2}}\neq 0$.
\end{proof}

\begin{proof}[\textbf{Proof of Proposition~\ref{Bezier average is Hermite average}}]
By the definition of $X$, it remains to show that properties (1)-(3) in Definition~\ref{Hermite average} holds for the B\'{e}zier average.
For identity on the limit diagonal, observe that from~\eqref{lines reconstraction} (which is proved next independently) we get:
\begin{equation*}
    B_{\omega }\left(\begin{pmatrix}
    p \\ 
    v \\ 
    \end{pmatrix},\begin{pmatrix}
    p+tv \\ 
    v \\ 
    \end{pmatrix}\right)=\begin{pmatrix}
    p+t\omega v \\
    v \\
    \end{pmatrix}\xrightarrow{t\longrightarrow 0^{ + }}\begin{pmatrix}
    p\\ 
    v\\ 
    \end{pmatrix}.
\end{equation*}
For symmetry, note that each  $\left(\begin{pmatrix}
p_{0} \\ 
v_{0} \\ 
\end{pmatrix},\begin{pmatrix}
p_{1} \\ 
v_{1} \\ 
\end{pmatrix}\right)$ and $\left(\begin{pmatrix}
p_{1} \\ 
 - v_{1} \\ 
\end{pmatrix},\begin{pmatrix}
p_{0} \\ 
 - v_{0} \\ 
\end{pmatrix}\right)$ defines the same control points in reverse order and therefore define the same B\'{e}zier curve corresponding to a reversed parametrization.
End points interpolation is easily verified by \eqref{bezier curve} and \eqref{tangents curve}.
\end{proof}

\begin{proof}[\textbf{Proof of Lemma~\ref{lemma: similarity invariance}}]
Equation~\eqref{similarity 2} is easy to verify by Definition~\ref{def: average def} and specifically by looking at~\eqref{eqn:alpha}, \eqref{bezier curve} and~\eqref{tangents curve}.

To observe that~\eqref{similarity1} holds, we exploit the fact that B\'{e}zier curves are invariant under affine transformations. Let $\Phi$ be an isometry over $\mathbb{R}^{n}$. In particular, $\Phi$ is an affine transformation. It is therefore sufficient to show that the control points associated with $B_\omega\left(\tilde{\Phi}\begin{pmatrix}
p_0\\v_0\\
\end{pmatrix},\tilde{\Phi}\begin{pmatrix}
p_1\\v_1\\
\end{pmatrix}\right)$ are given by applying $\Phi$ to 
$$p_0, p_0+\alpha v_0, p_1-\alpha v_1, p_1.$$
The control points associated with $B_\omega$ 
are: 
\begin{equation*}
    \Phi(p_0),\Phi(p_0)+\tilde{\alpha}\left(\Phi(v_0)-\Phi(0)\right),\Phi(p_1)-\tilde{\alpha}\left(\Phi(v_1)-\Phi(0)\right),\Phi(p_1),
\end{equation*}
where $\tilde{\alpha}=\alpha\left(\Phi(p_0),\Phi(p_1),\Phi(v_0)-\Phi(0),\Phi(v_1)-\Phi(0)\right)$.

It is clear that the first and last control points are obtained by applying $\Phi$ to the original points. As for the second and third control points, since $\Phi$ is an isometry, $\tilde{\alpha}=\alpha$ and since $\Phi$ is an affine transformation it is of the form $\Phi(x)=Rx+t$ with $R$  an orthogonal matrix, and $t\in\mathbb{R}^{n}$. Therefore, $t=\Phi(0)$ and
\begin{equation*}
    \Phi\left(p_0+\alpha v_0\right)= Rp_0+\alpha Rv_0+t=\Phi(p_0)+\alpha\left(\Phi(v_0)-t\right)=\Phi(p_0)+\tilde{\alpha}\left(\Phi(v_0)-\Phi(0)\right),
\end{equation*}
\begin{equation*}
    \Phi\left(p_1-\alpha v_1\right)= Rp_1-\alpha Rv_1+t=\Phi(p_1)-\alpha\left(\Phi(v_1)-t\right)=\Phi(p_1)-\tilde{\alpha}\left(\Phi(v_1)-\Phi(0)\right).
\end{equation*}
This completes the proof of~\eqref{similarity1}.
\end{proof}

\begin{proof}[\textbf{Proof of Lemma~\ref{line reconstruction lemma}}]
    Let $p_0,p_1 \in\mathbb{R}^{n}$ s.t. $p_0\neq p_1$, and let $v_{0}=v_1=u$. Then, we have that
    $\alpha\left(p_0,p_1, u,u\right)
    =\frac{1}{3}\|p_{1} - p_{0}\|$.
    By Definition~\ref{def: average def}
    \begin{equation*}
    B_{\omega }\left(\begin{pmatrix}
    p_{0} \\ 
    u \\       
  \end{pmatrix},\begin{pmatrix}
    p_{1} \\ 
    u \\      
    \end{pmatrix}\right)= \begin{pmatrix}
    b(w)\\
    \frac{b'(w)}{\|b'(w)\|}\\
    \end{pmatrix},
    \end{equation*}
    and by simple calculations based on~ \eqref{bezier curve}  and on~\eqref{tangents curve},
    we arrive at the claim of the lemma.
\end{proof}

\begin{proof}[\textbf{Proof of Lemma \ref{circle reconstruction lemma}}]
By lemma \ref{lemma: similarity invariance}, it is sufficient to show that for data sampled from the unit circle in $\mathbb{R}^{2}$ , the B\'{e}zier average with $\omega=\frac{1}{2}$ the mid-point of the circular arc determined by the data, and its ntangent. 

Assume $\left(\begin{pmatrix}
p_0\\v_0\\ \end{pmatrix}\begin{pmatrix}
p_1\\v_1\\ \end{pmatrix}\right)\in\mathbb{R}^{2}\times S^{1}$ are sampled from a unit circle.  We further assume w.l.o.g that $p_0=\begin{pmatrix}1\\0\\\end{pmatrix}, v_0=\begin{pmatrix}0\\1\\ \end{pmatrix}$ and denote $p_1=\begin{pmatrix}\cos(\varphi)\\ \sin(\varphi)\end{pmatrix}, v_1=\begin{pmatrix}-\sin(\varphi)\\ \cos(\varphi)\end{pmatrix}$ where $0<\varphi<2\pi$.
Basic geometrical reasoning shows that $\theta_0=\theta_1=\frac{\varphi}{2}$, and
that if $p_1$ lies on the upper semicircle, namely if $\varphi\leq\pi$, then $\varphi=\theta$, while otherwise, $\varphi=2\pi-\theta$. 
Using known trigonometric identities we obtain, 
\begin{equation}
    \alpha=\frac{d\left(p_0,p_1\right)}{3\cos^2\left(\frac{\theta_0+\theta_1}{4}\right)}=\frac{4}{3}\tan\left(\frac{\varphi}{4}\right),
\label{circle_alpha}
\end{equation} 
which is the parameter used in~\cite{dokken1990good}. Therefore, by Theorem~1 in \cite{dokken1990good}, the point of the B\'{e}zier average with weight $\frac{1}{2}$ of $\begin{pmatrix}p_0\\v_0\\ \end{pmatrix}$ and $\begin{pmatrix}p_1\\v_1\\ \end{pmatrix}$ is the midpoint of the circular arc defined by the two points and their ntangents. 

To verify that the average tangent is tangent to the unit circle at $p_{\frac{1}{2}}$, note that the tangent to the unit circle at $p_{\frac{1}{2}}$ is in the direction of $p_1-p_0$. Observe as well that $v_0+v_1$ linearly depends on $p_1-p_0$. In fact, it implies that:
\begin{equation} \label{eq: dot product of u an the sum of tangent vectors}
    \frac{\langle v_0+v_1,u\rangle}{\norm{v_0+v_1}}=\begin{cases}
    1 \quad &\text{if} \quad \varphi<\pi ,\\
    0 \quad &\text{if} \quad \varphi=\pi , \\
    -1 \quad &\text{if} \quad \varphi>\pi .\\
\end{cases}
\end{equation}
and that $\norm{v_0+v_1}=\sqrt{2+2\cos(\theta)}=2\cos\left(\frac{\theta}{2}\right)$. See Figure~\ref{fig:circle demo} for a graphical demonstration.
\begin{figure}
    \centering
    \includegraphics[width=0.275\textwidth]{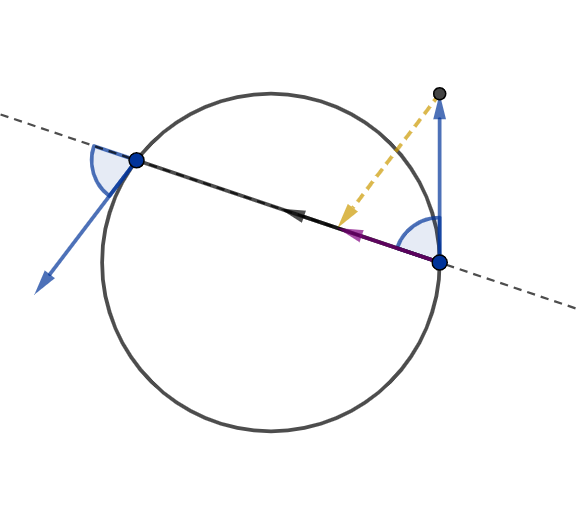}
    \quad
    \includegraphics[width=0.31\textwidth]{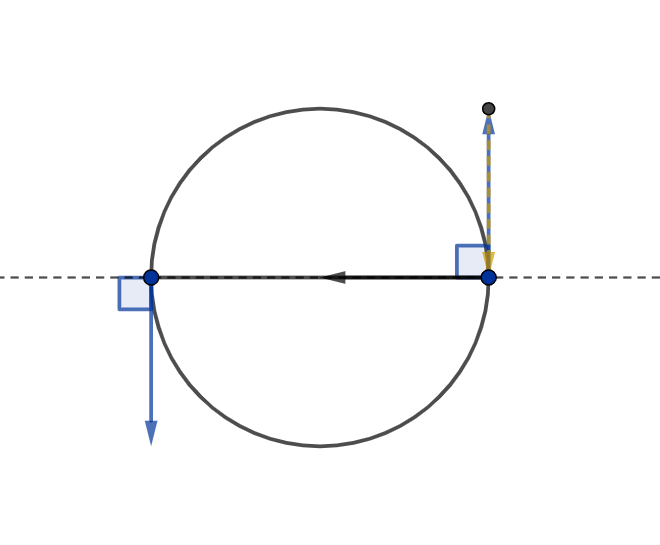}
    \quad
    \includegraphics[width=0.26\textwidth]{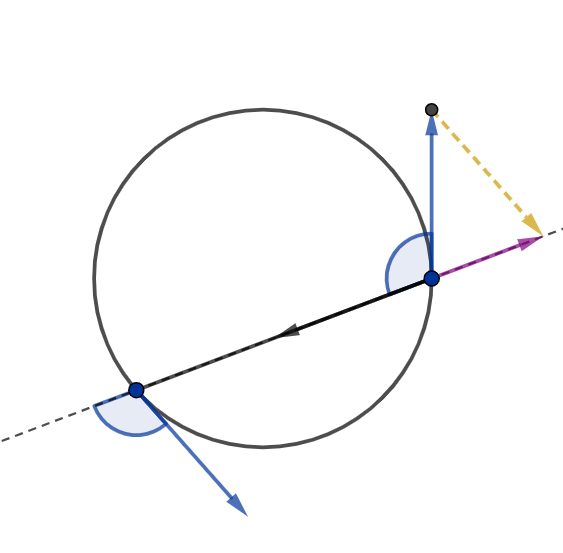}
    \caption{The sum of two directed tangent vectors to the unit circle linearly depends on the vector of difference between the two points of tangency. The yellow vector is the other tangent moved to form the sum of the two tangents which appears in red. The three sub-figures demonstrate the three possibilities mentioned in~\eqref{eq: dot product of u an the sum of tangent vectors}.}
    \label{fig:circle demo}
\end{figure}
The above means that \begin{equation}
\label{equation: sum of vectors}
    v_0+v_1=
    \begin{cases}
    2\cos\left(\frac{\theta}{2}\right)\cdot u\quad &\text{if}\quad \varphi\leq\pi ,\\
    -2\cos\left(\frac{\theta}{2}\right)\cdot u\quad &\text{if}\quad \varphi>\pi  . \\
\end{cases}
\end{equation}
Considering the numerator of $v_{\frac{1}{2}}$ given by~\eqref{mid-vector}, and applying~\eqref{circle_alpha} and~\eqref{equation: sum of vectors}, we get
\begin{align*}
       p_1-p_0-\frac{1}{2}\alpha\left(v_0+v_1\right) 
    & = \begin{cases}
        \left(\frac{1}{3}+\frac{1}{3\cos^{2}\left(\frac{\theta}{4}\right)}\right)\left(p_1-p_0\right) \quad &\text{if} \quad \varphi\leq\pi , \\
        \left(1+\frac{\sqrt{2+2\cos(\theta)}}{6\sin^{2}\left(\frac{\theta}{4}\right)}\right)(p_1-p_0) \quad &\text{if} \quad \varphi>\pi . \\
    \end{cases}\\
\end{align*}
Since both $  \left(\frac{1}{3}+\frac{1}{3\cos^{2}\left(\frac{\theta}{4}\right)}\right),\left(1+\frac{\sqrt{2+2\cos(\theta)}}{6\sin^{2}\left(\frac{\theta}{4}\right)}\right)>0$, we get that $v_{\frac{1}{2}}$ is in the direction of $p_1-p_0$ and thus is the tangent to the circle at $p_{\frac{1}{2}}$.
\end{proof}

\begin{proof}[\textbf{Proof of Lemma~\ref{lemma: reaveraging}}]
We prove that $\left(\begin{pmatrix}
p_0\\v_0\\
\end{pmatrix},\begin{pmatrix}
p_{\frac{1}{2}}\\v_{\frac{1}{2}}
\end{pmatrix}\right)$ and $\left(\begin{pmatrix}
p_{\frac{1}{2}}\\v_{\frac{1}{2}}\\
\end{pmatrix},\begin{pmatrix}
p_1\\v_1\\
\end{pmatrix}\right)$ satisfy conditions~\eqref{eqn:conditions_on_pts}, namely,
\begin{enumerate}[label=(\roman*)]
    \item $p_{\frac{1}{2}}\neq p_j, \quad j=0,1$.
    \item Let $u_0=u\left(\begin{pmatrix}
p_0\\v_0\\
\end{pmatrix},\begin{pmatrix}
p_{\frac{1}{2}}\\v_{\frac{1}{2}}\\
\end{pmatrix}\right)$ and $u_1=u\left(\begin{pmatrix}
p_{\frac{1}{2}}\\v_{\frac{1}{2}}\\
\end{pmatrix},\begin{pmatrix}
p_1\\v_1\\
\end{pmatrix}\right)$. Then $v_j=v_\frac{1}{2}=u_j$ or $v_j,v_\frac{1}{2},u_j$ are not pair-wise linearly dependent for $j=0,1$.
\end{enumerate}

First we prove (i). By algebraic manipulations we obtain that $\norm{p_\frac{1}{2}-p_0}$ is equal to
\begin{equation*}
	\resizebox{\linewidth}{!}{\ensuremath{
    \frac{\norm{p_1-p_0}}{\sqrt{32}\cos^2\left(\frac{\theta_0+\theta_1}{4}\right)}\left(8\cos^4\left(\frac{\theta_0+\theta_1}{4}\right)+1-\cos(\theta)+4\cos^2\left(\frac{\theta_0+\theta_1}{4}\right) ( \cos(\theta_0)-\cos(\theta_1) ) \right)^\frac{1}{2} . }}
\end{equation*}
Then, we argue that
\begin{equation}
\label{eq:average point different than initials}
    z := 8\cos^4\left(\frac{\theta_0+\theta_1}{4}\right)+1-\cos(\theta)+4\cos^2\left(\frac{\theta_0+\theta_1}{4}\right) ( \cos(\theta_0)-\cos(\theta_1) ) > 0,
\end{equation}
and hence $\norm{p_\frac{1}{2}-p_0}>0$. In other words, $p_\frac{1}{2}\neq p_0$. 

To show that \eqref{eq:average point different than initials} holds, we observe that
\begin{align*}
    z &\geq 8\cos^4\left(\frac{\theta_0+\theta_1}{4}\right)+1-\cos(\theta_0-\theta_1)+4\cos^2\left(\frac{\theta_0+\theta_1}{4}\right)(\cos(\theta_0)-\cos(\theta_1))\\
    &=
    \resizebox{0.95\linewidth}{!}{\ensuremath{
    8\cos^{4}\left(\frac{\theta_0+\theta_1}{4}\right) + 2 - 2\cos^2\left(\frac{\theta_0-\theta_1}{2}\right)- 16\cos^{3}\left(\frac{\theta_0+\theta_1}{4}\right)\sin\left(\frac{\theta_0+\theta_1}{4}\right)\sin\left(\frac{\theta_0-\theta_1}{2}\right) . }}
\end{align*}
Let \begin{equation*}  
    t=\frac{\theta_{0} + \theta_{1}}{4},\quad    
    s=\frac{\theta_{0}-\theta_1}{2}.
\end{equation*}
Then, $ 0\leq t < \frac{\pi}{2}$, and $-\frac{\pi}{2}< s < \frac{\pi}{2}$. We define,
\begin{equation*}
    h\left(t,s\right):=8\cos^{4}(t) + 2 - 2\cos^2(s)- 16\cos^{3}(t)\sin(t)\sin(s),
\end{equation*}
and obtain
\begin{align*}
    \frac{\partial h}{\partial t} & =-32\cos^3(t)\sin(t)+48\cos^2(t)\sin^2(t)\sin(s)-16\cos^4(t)\sin(s), \\
    \frac{\partial h}{\partial s} & =4\cos(s)\sin(s)-16\cos^{3}(t)\sin(t)\cos(s) .
\end{align*}
To derive the critical points of $h$, we observe that $\frac{\partial h}{\partial t}=0$ iff $\sin(s)=\frac{\sin(2t)}{3\sin^2(t)-\cos^2(t)}$ and similarly $\frac{\partial h}{\partial s} = 0$ iff $ \sin(s)=2\cos^2(t)\sin(2t)$. In other words,
\begin{align*}
    \nabla h=0 \iff \begin{cases}
        \sin(s)=2\cos^2(t)\sin(2t) , \\
        2\cos^2(t)\sin(2t)=\frac{\sin(2t)}{3\sin^2(t)-\cos^2(t)} .
    \end{cases}
\end{align*}
However, the only solution of the latter system of equations over $\left[0,\frac{\pi}{2}\right)\times\left(-\frac{\pi}{2},\frac{\pi}{2}\right)$ is $(0,0)$. Thus, we conclude that the minimum value of $h$ is obtained on the boundary of $\left[0,\frac{\pi}{2}\right)\times\left(-\frac{\pi}{2},\frac{\pi}{2}\right)$ or in $(0,0)$, which is also on the boundary (see Result~\ref{2}). We further explore all candidates of extremal points on the boundary of $\left[0,\frac{\pi}{2}\right)\times\left(-\frac{\pi}{2},\frac{\pi}{2}\right)$. 

We consider the restriction of $h$ to the boundaries, which are functions of one variable, and get that
\begin{align*}
    h_{t,l} &:=h\left(t,-\frac{\pi}{2}\right)=8\cos^{4}(t) +2+16\cos^{3}(t)\sin(t)\geq 2 > 0 , \\
    h_{t,r} &:=h\left(t,\frac{\pi}{2}\right)=8\cos^{4}(t) +2-16\cos^{3}(t)\sin(t) \geq 0 , \\
    h_{s,l} &:=h\left(0,s\right)=10-2\cos^2(s)\geq 8 > 0 , \\
    h_{s,r} &:=h\left(\frac{\pi}{2},s\right)= 2 -2\cos^2(s) \geq 0 .
\end{align*}
Now, $h_{t,r}(t)=0$ iff $t=\frac{\pi}{4}$,namely at the point $(\frac{\pi}{4},\frac{\pi}{2})$, while $h_{s,r}(s)=0$ iff $s=0$, namely at the point $(\frac{\pi}{2},0)$. Thus the above two equalities to $0$ are not satisfied simultaneously.  Finally, by the continuity of $h$ and since $h(t\pm\epsilon_0,s\pm\epsilon_1)\neq h(t,s)$ for small enough $\epsilon_0$ and $\epsilon_1$, we get that $h(t,s)>0$ for any $(t,s)\in\left[0,\frac{\pi}{2}\right)\times\left(-\frac{\pi}{2},\frac{\pi}{2}\right)$ and conclude that $p_{\frac{1}{2}}\neq p_0$.
By symmetric reasoning we can show that also $p_\frac{1}{2}\neq p_1$.

For the second claim, assume $v_j,v_\frac{1}{2},u_j$ are pair-wise linearly dependent. That is, $v_j=\pm u_j$ and $v_\frac{1}{2}=\pm u_j$  for some $j\in\{0,1\}$. For simplicity we assume $j=0$. We follow De Casteljau's algorithm  for obtaining $p_\frac{1}{2}$ and $v_\frac{1}{2}$. As in~\eqref{control points}, consider the initial control points of the associated B\'{e}zier curve and denote:
    \begin{equation*}
            q_0^0=p_{0},\quad q_1^0=p_{0} + \alpha v_{0},\quad q_2^0=p_{1} - \alpha v_{1},\quad q_3^0=p_{1} .
    \end{equation*}
The following points are obtained by De Casteljau's algorithm:
    \begin{align*}
    &q_0^1=\frac{q_0^0+q_1^0}{2},\quad q_1^1=\frac{q_1^0+q_2^0}{2},\quad q_2^1=\frac{q_2^0+q_3^0}{2}\\
    &q_0^2=\frac{q_0^1+q_1^1}{2},\quad q_1^2=\frac{q_1^1+q_2^1}{2}\\
    &q_0^3=\frac{q_0^2+q_1^2}{2}.
    \end{align*}
Then, $p_\frac{1}{2}=q_0^3$ and $v_\frac{1}{2}=\frac{q_1^2-q_0^2}{\norm{q_1^2-q_0^2}}$. For further information about De Casteljau's algorithm, we refer the reader to \cite{prautzsch2002bezier}. Since $v_0=\pm u_0$, the points $q_0^0,q_1^0$ and $q_0^3$ lie on the same line. We denote this line by $l$. Since $v_{\frac{1}{2}}=\pm u_0$, we have that $q_0^2$ and $q_1^2$ also lie on $l$. $q_0^1$ is the average of $q_0^0,q_1^0$  and hence it also lies on $l$. Now, since $q_0^2,q_0^1$ and $q_1^1$ lie on the same line, it follows that $q_1^1$ is on $l$. Similarly, $q_2^0$ is on $l$. Now $q_2^1$ is on $l$ since it lies on the line passing through $q_1^2$ and $q_1^1$ and finally we get that $q_3^0$ is on $l$. Thus $v_0,v_1,u$ are pair-wise linearly dependent and hence, by our assumption, $v_0=v_1=u$.  It is shown in Lemma~\ref{line reconstruction lemma} that in this case $v_0=v_\frac{1}{2}=u_0$ as required.
\end{proof}

\begin{proof}[\textbf{Proof of Lemma~\ref{points distance contraction lemma}}]
For the first part,
\begin{align*}
    d\left(p_{\frac{1}{2}},p_{0}\right) &=\left\Vert p_{\frac{1}{2}} - p_{0}\right\Vert  =\left\Vert \frac{1}{2}\left(p_{1} - p_{0}\right) +\frac{3}{8}\alpha\left(v_{0} - v_{1}\right)\right\Vert \\
    & \leq \frac{1}{2} d\left(p_{0},p_{1}\right) +\frac{d\left(p_{0},p_{1}\right)}{8\cos^{2}\left(\frac{\theta_{0} + \theta_{1}}{4}\right)}\left(\left\Vert v_{0}\right\Vert  +\left\Vert v_{1}\right\Vert\right) \\
    & = d(p_{0},p_{1})\cdot\left(\frac{1}{2} +\frac{1}{4\cos^{2}\left(\frac{\theta_{0} + \theta_{1}}{4}\right)}\right) .
\end{align*}
Since $ \theta_{0} + \theta_{1}\leq \pi$ we have that $ \cos^{2}\left(\frac{\theta_{0} + \theta_{1}}{4}\right)\geq \frac{1}{2}$ and $ d\left(p_{\frac{1}{2}},p_{0}\right)\leq d\left(p_{0},p_{1}\right)$. In the same way we obtain $ d\left(p_{\frac{1}{2}},p_{1}\right)\leq d\left(p_{0},p_{1}\right)$. For the second claim of the lemma, simply set $  \mu  = \frac{1}{2} +\frac{1}{4\cos^{2}\left(\frac{\gamma }{4}\right)}$ for any $\gamma$ satisfying
$ \theta_{0} + \theta_{1}\leq \gamma <\pi$.
\end{proof}

\section{The set \texorpdfstring{$X$}{X} of the B\'{e}zier Average} \label{app:subsec_conjecture}

The set $X\subset\left(\mathbb{R}^n\times S^{n-1}\right)^2$ consists of all pairs where the B\'{ezier} average is a Hermite average by Definition~\ref{Hermite average}. Namely, for any pair in $X$, the B\'{e}zier average fulfills the conditions~\eqref{eqn:conditions_on_pts} and produces averaged vectors in $S^{n-1}$ for any weight in $[0,1]$. This section further investigates the properties of elements in $X$ to receive some perception of the size and significance of $X$.

Let $x = \left(\begin{pmatrix}
p_0\\v_0\\
\end{pmatrix},\begin{pmatrix}
p_1\\v_1\\
\end{pmatrix}\right) \in \left(\mathbb{R}^n\times S^{n-1}\right)^2$ be a pair that meets the conditions~\eqref{eqn:conditions_on_pts}. Then, following the notation of Section~\ref{sec:HermiteAveraging}, $x\in X$ iff $b(t)$ of~\eqref{bezier curve} is regular over $[0,1]$. Nevertheless, recall that the regularity of $b(t)$ depends on the parameters $\theta_0,\theta_1$ and $\theta$ of $x$. In particular, to obtain regularity, the linearly independence of $v_0,v_1,u$ is sufficient; therefore, if $p_0,p_0+v_0,p_1,p_1+v_1$ do not lie on the same plane, then we conclude that $x\in X$. Thus, it remains to characterize the case where $p_0,p_0+v_0,p_1,p_1+v_1$ are in a two dimensional space.

Consider the vectors of difference between each two consecutive control points, 
\begin{equation*}
    \alpha v_0, \quad p_1-p_0-\alpha (v_0+v_1), \quad \alpha v_1 .
\end{equation*}
By Theorem~2 in~\cite{lin2005regular}, if the angles between each pair of those vectors are acute, then $b(t)$ is regular. That is, if 
\begin{equation} \label{eqn:acute_angles}
\theta<\frac{\pi}{2} \quad \text{ and } \quad \langle v_j, p_1-p_0-\alpha (v_0+v_1)\rangle>0 \quad j=0,1.
\end{equation}
Then, $x\in X$. The inner products in~\eqref{eqn:acute_angles} are
\begin{equation}
\label{eq: angles}
    \langle v_j, p_1-p_0-\alpha (v_0+v_1)\rangle=\norm{p_1-p_0}\left(\cos(\theta_j)-\frac{1+\cos(\theta)}{3\cos^2\left(\frac{\theta_0+\theta_1}{4}\right)}\right), \quad j=0,1.
\end{equation}
Namely, if $\theta<\frac{\pi}{2}$ and $3\cos^2\left(\frac{\theta_0+\theta_1}{4}\right)\cos(\theta_j)-1-\cos(\theta)>0$ for any $j=0,1$, then $x\in X$.

In the plane we have the following relationship between $\theta_0,\theta_1$ and $\theta$:
\begin{equation}
\label{possibilities for theta}
\theta=\theta_0 + \theta_1  \quad \mathrm{or}\quad \theta= \abs{\theta_0-\theta_1}  \quad\mathrm{or}\quad  \theta=2\pi-\left(\theta_0+\theta_1\right)
\end{equation}
In Figure~\ref{fig:acute angles} we plot the domains of positivity of~\eqref{eq: angles} for $\theta=\theta_0+\theta_1$ and for $\theta=\abs{\theta_0-\theta_1}$. Note that the first case is equivalent in sense of the positivity of~\eqref{eq: angles} to the case where $\theta=2\pi-\left(\theta_0+\theta_1\right)$ and therefore the latter is omitted. We consider the intersection of the domains in question, for both $j=0,1$, together with $\theta<\frac{\pi}{2}$. We conclude that even in the case where the control points $p_0,p_0+v_0,p_1,p_1+v_1$ lie on a plane (a rare event in high dimensions) the domain where $x\in X$ is not minor. 

\begin{figure}
    \centering
    \begin{subfigure}[t]{0.3\textwidth}
        \includegraphics[width=\textwidth]{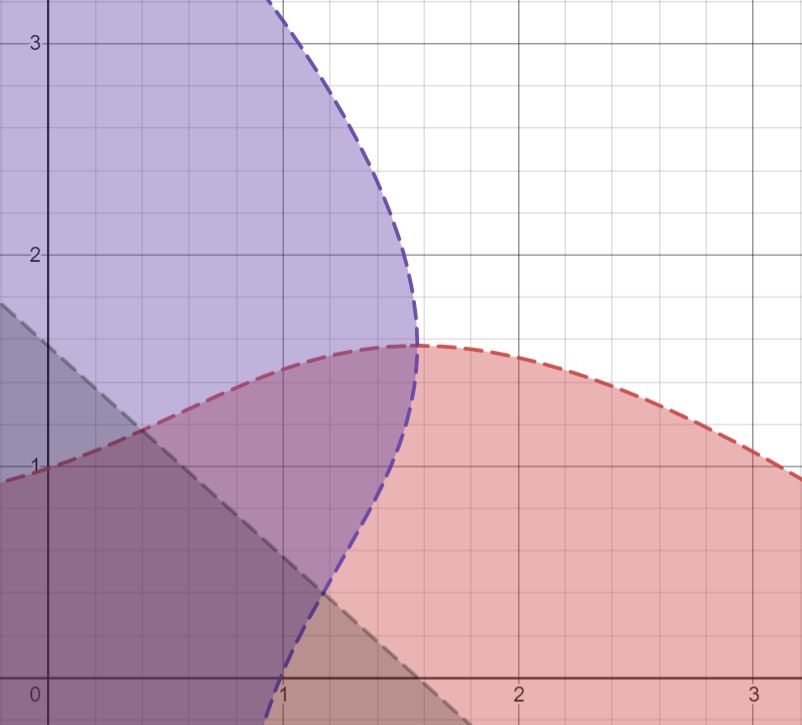}
        \caption*{$\theta = \theta_0 + \theta_1$}
    \end{subfigure}\quad\quad
    \begin{subfigure}[t]{0.3\textwidth}
        \includegraphics[width=\textwidth]{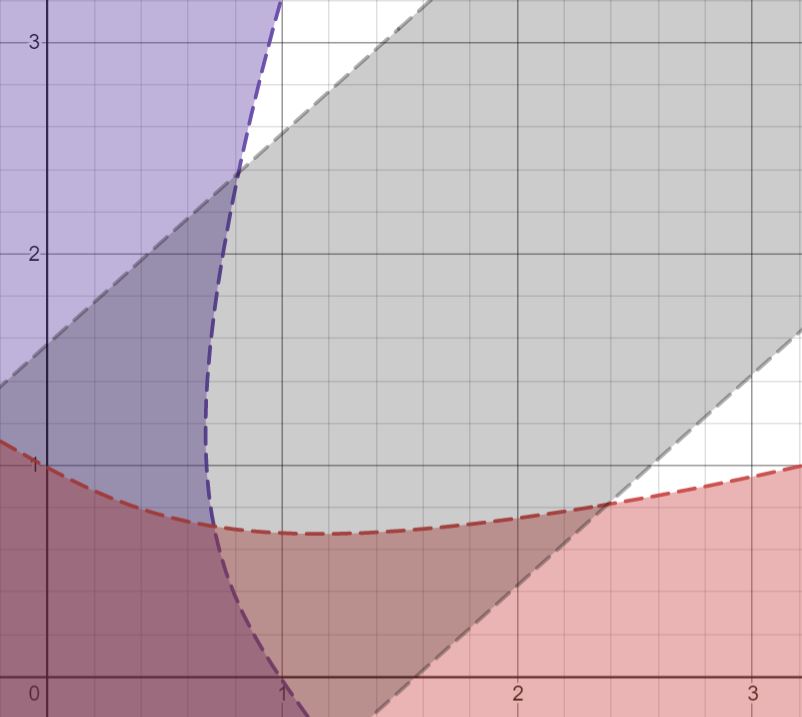}
        \caption*{$\theta = \abs{\theta_0 - \theta_1}$}
    \end{subfigure}

    \caption{Intersections as sufficient conditions for a pair to be in $X$ when control points lie on a plane. The graphs show the two fundamental cases of relations between the parameters. In each graph, we see the possible domain of the parameters $\theta_0$ (horizontal axis) and $\theta_1$ (vertical axis), where the blue and red subdomains indicate $\langle v_j,p_1-p_0-\alpha (v_0+v_1)\rangle>0$ for $j=0$ and $j=1$, respectively. The black domain is when $\theta<\frac{\pi}{2}$. The intersections are when all conditions are met.}
    \label{fig:acute angles}
\end{figure}

The conditions on $\theta_0,\theta_1$ defined by the intersections, as presented in Figure~\ref{fig:acute angles}, are sufficient to determine that $x\in X$. Nevertheless, these conditions are no necessary. To complete the picture, we briefly show a necessary conditions for a pair $x$ to be in $\left(\mathbb{R}^n\times S^{n-1}\right)^2\setminus X$. 

Recall that we consider only the case when the control points are planer. Therefore, we assume, w.l.o.g, that $p_0=\begin{pmatrix}
0\\0\\
\end{pmatrix}, p_1=\begin{pmatrix}
1\\0\\
\end{pmatrix}, v_0=\begin{pmatrix}
\cos{x}\\ \sin{x}\\
\end{pmatrix}, v_1=\begin{pmatrix}
\cos{y}\\ \sin{y}\\
\end{pmatrix}$ for some arbitrary $x,y \in \left[-\pi,\pi\right]$. In this case, $\theta_0=|x|$ and $\theta_1=|y|$. Assigning the above to~\eqref{tangents curve} yields:
\begin{equation*}
    \frac{d}{dt}b\left(t\right) = 6t\left(1 - t\right)\begin{pmatrix}
    1\\0\\
    \end{pmatrix} + \frac{\left(3t - 1\right)\left(t - 1\right)}{\cos^2\left(\frac{|x|+|y|}{4}\right)} \begin{pmatrix}
    \cos{x}\\ \sin{x}\\
    \end{pmatrix} + \frac{t\left(3t - 2\right)}{\cos^2\left(\frac{|x|+|y|}{4}\right)} \begin{pmatrix}
    \cos{y}\\ \sin{y}\\
    \end{pmatrix}.
\end{equation*}
Therefore, $\frac{d}{dt}b\left(t\right)=0$ iff 
\begin{equation} \label{eqn:quad_sys_t}
    \begin{cases}
        6t\left(1 - t\right)\cos^2\left(\frac{|x|+|y|}{4}\right)+\left(3t - 1\right)\left(t - 1\right)\cos{x}+t\left(3t - 2\right)\cos{y}=0 \\
        \left(3t - 1\right)\left(t - 1\right)\sin{x}+t\left(3t - 2\right)\sin{y}=0 .\\
    \end{cases}
\end{equation}
The equations in~\eqref{eqn:quad_sys_t} are quadratic in $t$ but transcendental in $x$,$y$ and their absulote value sum and so we conclude that $x \notin X$ iff the system~\eqref{eqn:quad_sys_t} has a solution in $t \in [0,1]$.

\end{appendices}

\end{document}